\documentclass[a4paper,10pt]{article}

\usepackage{graphicx}
\usepackage{amssymb}
\usepackage{amsthm}
\usepackage{amsmath}
\usepackage{enumitem}

\usepackage[dvips]{hyperref}
\usepackage{breakurl}

\usepackage{enumitem}
\usepackage{bookman}

\newtheorem{theorem}{Theorem}
\newtheorem{definition}[theorem]{Definition}
\newtheorem{remark}[theorem]{Remark}
\newtheorem{example}[theorem]{Example}
\newtheorem{corollary}[theorem]{Corollary}
\newtheorem{proposition}[theorem]{Proposition}
\newtheorem{lemma}[theorem]{Lemma}

\newtheorem*{proof1}{Proof of Theorem~\ref{thm:main}}

\numberwithin{theorem}{section}
\numberwithin{equation}{section}
\numberwithin{figure}{section}

\newcommand{\data}{V}
\newcommand{\signal}{u}
\newcommand{\noise}{\eps}

\newcommand{\Data}{Y}
\newcommand{\Para}{x}
\newcommand{\Noise}{\xi}

\newcommand{\base}{\phi}
\newcommand{\dbase}{\tilde\phi}
\newcommand{\curve}{\psi}
\newcommand{\wave}{\psi}
\newcommand{\scale}{\phi}
\newcommand{\Base}{\boldsymbol \Phi}
\newcommand{\dict}{\mathcal D}
\newcommand{\wi}{w}
\newcommand{\J}{\mathcal J}

\newcommand{\ball}{R}
\newcommand{\edot}{\,\cdot\,}

\newcommand{\kl}[1]{\left(#1\right)}

\newcommand\bkl[1]{\Bigl(#1\Bigr)}
\newcommand{\ekl}[1]{\left[#1\right]}
\newcommand\set[1]{\left\{#1\right\}}

\newcommand\norm[1]{\left\Vert#1\right\Vert}
\newcommand\enorm{\lVert\,\cdot\,\rVert}
\newcommand\einner{\langle\,\cdot\,,\,\cdot\,\rangle}
\newcommand\abs[1]{\left\lvert#1\right\rvert}
\newcommand\xsabs[1]{\lvert#1\rvert}
\newcommand\inner[2]{\left\langle  #1,#2 \right\rangle}
\newcommand{\ckl}[1]{\left\lceil#1\right\rceil}
\newcommand{\fkl}[1]{\left\lfloor#1\right\rfloor}
\newcommand{\req}[1]{(\ref{eq:#1})}

\newcommand\skl[1]{(#1)}
\newcommand\mkl[1]{\big(#1\bigr)}
\newcommand\sset[1]{\{#1\}}
\newcommand\mset[1]{\bigl\{#1\bigr\}}
\newcommand\snorm[1]{\lVert#1\rVert}
\newcommand\mnorm[1]{\bigl\lVert#1\bigr\rVert}
\newcommand\sinner[2]{\langle  #1,#2 \rangle}
\newcommand\minner[2]{\bigl\langle  #1,#2 \bigr\rangle}
\newcommand\sabs[1]{\lvert#1\rvert}
\newcommand\mabs[1]{\bigl\lvert#1\bigr\rvert}

\newcommand\R{\mathbb{R}}
\newcommand\Z{\mathbb{Z}}
\newcommand\N{\mathbb{N}}

\newcommand\I{I}

\newcommand\B{\mathcal{B}}

\newcommand{\om}{\omega}
\newcommand{\la}{\lambda}
\newcommand{\Om}{\Omega}
\newcommand{\La}{\Lambda}
\newcommand{\eps}{\epsilon}

\DeclareMathOperator{\wk}{\mathbf P}
\DeclareMathOperator{\ew}{\mathbf E}
\DeclareMathOperator{\cov}{\mathbf{Cov}}

\DeclareMathOperator{\range}{\mathrm Ran}

\newcommand{\soft}{S}
\newcommand{\softop}{\mathbf S}
\newcommand{\shiftop}{\mathbf T}
\newcommand{\Ft}{\mathbf F}

\newcommand{\Ct}{\mathbf C}
\newcommand{\Wave}{\mathbf W}
\newcommand{\Scale}{\mathbf V}


\newcommand{\V}{\mathcal V}
\newcommand{\W}{\mathcal W}

\setlength{\parskip}{0.5em} \setlength{\parindent}{0em}

\usepackage{geometry}
\geometry{inner=3.2cm, outer=3.1cm,top=3cm,bottom=3.6cm}

\title{Extreme Value Analysis of Empirical Frame Coefficients
and Implications for Denoising by\\Soft-Thresholding}

\author{Markus Haltmeier{}$^{\star,}\footnote{Corresponding author.
E-mail address:
\href{mailto:markus.haltmeier@uibk.ac.at}
{\tt markus.haltmeier@uibk.ac.at} }$
\quad Axel Munk{}$^{\diamond,\sharp}$
}

\date{\normalsize ${}^\star$Department of Mathematics\\University of Innsbruck\\
Technikerstra{\ss}e 21a, A-6020  Innsbruck
\\[0.5em]
${}^\diamond$Statistical Inverse Problems in Biophysics\\Max Planck Institute for Biophysical Chemistry\\
Am Fa{\ss}berg 11, D-37077 G\"ottingen
\\[0.5em]
${}^\sharp$Institute for Mathematical Stochastics\\University of G\"ottingen,\\
Goldschmidtstra{\ss}e 7, D-37077 G\"ottingen}
\begin{document}
\maketitle

\begin{abstract}
Denoising by frame thresholding  is one of the most basic and
efficient methods for recovering a discrete signal or image
from data  that are corrupted by additive Gaussian white noise.
The basic  idea is to select a frame of analyzing elements
that separates the  data in few large coefficients due to the signal
and many small coefficients mainly due to
the noise $\noise_n$.
Removing  all  data coefficients  being in magnitude below a certain threshold yields a reconstruction
of the original signal.
In order to properly  balance the amount of noise to be removed and the relevant signal features to be kept,
a precise understanding of  the statistical properties
of  thresholding  is important. For that purpose we derive
the asymptotic distribution of  $\max_{\om\in\Om_n} \sabs{\sinner{\base_{\om}^{n}}{\noise_n}}$  for a wide class
of  redundant frames $\mkl{\base_{\om}^{n}: \om \in \Om_n}$.
Based on  our theoretical  results  we give a rationale for
universal extreme value thresholding techniques
yielding asymptotically sharp confidence regions and smoothness estimates corresponding  to prescribed significance levels.
The results cover many frames used in
imaging and signal recovery applications, such as redundant wavelet
systems, curvelet frames, or unions of bases.
We show that `generically' a standard Gumbel
law results as it is  known from  the case of  orthonormal wavelet bases.
However,  for specific highly redundant frames other limiting
laws may occur. We indeed verify that the translation invariant wavelet transform shows a different asymptotic behaviour.

\bigskip
\textbf{Keywords.}
Denoising,
thresholding estimation,
extreme value analysis,
Gumbel distribution,
Berman's inequality,
wavelet thresholding,
curvelet thresholding,
translation invariant wavelets,
extreme value threshold,
frames,
redundant dictionaries.
\end{abstract}

\section{Introduction}
\label{sec:intro}

We consider the problem of estimating  a  $d$-signal or image
$\signal_n$ from noisy observations
\begin{equation} \label{eq:problem}
	\data_n \kl{k} = \signal_n \kl{k} +  \noise_n\kl{k}
	\,,
	\qquad \text{ for } \;
	k \in \I_n := \set{0,\dots,  n - 1}^{d}  \quad  \text{ with }  d \in \N  \,.
\end{equation}
Here   $\noise_n\kl{k} \sim N\mkl{0,\sigma^2}$ are independent normally distributed random variables (the noise), $n$ is the level of discretization,
and $\sigma^2 $ is the  variance of the data  (the  noise level).
The  signal $\signal_n$ is assumed to be  a discrete approximation  of some underlying continuous domain  signal obtained by  discretizing a function $\signal \colon  [0,1]^d \to \R$.
One may think of  the entries  of $\signal_n$ as point samples
$\signal_n \kl{k} = \signal\kl{k/n}$ on an equidistant
grid. However, in some situations it may be more realistic to consider other discretization models. Area samples, for example,  are  more  appropriate in many imaging applications.
In this paper we will not pursue this topic further, because most of the presented results do not
crucially depend on the particular discretization model as long as $\signal_n$ can be associated with a function
$\signal_n^* \colon  [0,1]^d \to \R$
(some kind of abstract interpolation) which, in a suitable way,
tends to  $\signal$ as $n \to \infty$.

The aim of denoising  is to estimate the unknown signal
$\signal_n  :=  \mkl{\signal_n\kl{k}: k\in \I_n} \in \R^{\I_n}$  from  the data $\data_n :=  \mkl{\data_n\kl{k}: k\in \I_n}
\in \R^{\I_n}$.
The particular estimation procedure we will analyze in detail is soft-thresholding in frames and overcomplete dictionaries.
We stress, however, that a similar analysis also applies to
different thresholding methods, such as block thresholding
techniques (as considered, for example, in \cite{Cai99,CaiZho09,CheFadSta10,HalEtal97}).

\subsection{Wavelet Soft-Thresholding}
\label{sec:wave-soft}

In order to motivate our results for thresholding for general
frames we start by one dimensional wavelet  soft-thresholding.
For that purpose, let  $\mkl{\wave_{j,k}^{n} \colon
(j,k) \in \Om_n}$ denote an orthonormal wavelet basis of $\R^n$,
where $n = 2^J$ is the number of data points  and
\begin{equation*}
    \Om_n := \set{\skl{j,k}:
    j \in \sset{ 0, \dots,  J  -1} \text{ and }
    k \in \sset{0, \dots, 2^j-1 }}
\end{equation*}
the index set of the wavelet basis.
Wavelet soft-thresholding is by now a standard method for
signal and image denoising
(see, for example, \cite{AntBigSap01,CoiDon95,Don95,DonJoh94,DonJohKerPic96,HaeKerPicTsy98,Joh11,Mal09,StaMurFad10} for surveys and
some original references).
It consists of the following  three basic steps:

\begin{enumerate}[label=\arabic*), topsep=0em]
\item 
\label{it:thresh1}
Compute all empirical wavelet coefficients
$\Data_n \kl{j,k} = \minner{\wave_{j,k}^{n}}{\data_n}$ of the given noisy
data with respect to the considered
orthonormal  wavelet basis.

\item \label{it:thresh2}
For some threshold $T_n \in \kl{0,\infty}$, depending on the noise level and the number of data points,
apply the nonlinear soft-thresholding function
\begin{equation}\label{eq:soft}
\soft \mkl{\edot, T_n} \colon \R \to \R \colon
y \mapsto \soft\kl{y, T_n}
:=
\begin{cases}
y  + T_n & \text{ if } y  \leq   -T_n \\
y-  T_n & \text{ if } y  \geq   \phantom{-}T_n \\
0    & \text{ otherwise }
\end{cases}
\end{equation}
to each wavelet  coefficient of the data.
The resulting thresholded coefficients
$\soft  \skl{\Data_n(j,k), T}$ are then considered
as estimates for the wavelet coefficients
of $\signal_n$.
Notice, that the soft-thresholding function can be written in the
compact form $\soft \kl{y, T_n} = \operatorname{sign}\kl{y} \kl{\abs{y} - T_n}_+$. Further,
it sets all coefficients being in magnitude smaller than $T_n$ to zero and shrinks the
remaining coefficients towards zero by the value $T_n$.

\item \label{it:thresh3}
The desired estimate for the signal $\signal_n$ is then
defined  by  the wavelet series of the   thresholded empirical coefficients
$\soft  \skl{\Data_n(j,k), T}$,
\begin{equation} \label{eq:wave-soft}
\hat \signal_n
= \sum_{j=0}^{J}\sum_{k=0}^{2^j-1}
\soft  \skl{\Data_n(j,k), T_n}
\,
\wave^n_{j,k} \,.
\end{equation}
\end{enumerate}

Every step  in the above procedure can  be computed in
$\mathcal O (n)$ operation counts and hence
the overall procedure  of wavelet soft-thresholding is linear in the  number
$n$ of unknown parameters (see \cite{Coh03,DonJoh94,Mal09}).
It is  thus not only conceptually simple but also allows for
fast numerical implementation. Even simple linear  spectral denoising techniques  using  the FFT algorithm have a numerical complexity of
$\mathcal O\mkl{ n \log n}$ floating point
operations. Besides  these practical advantages, wavelet
soft-thresholding also obeys certain theoretical optimality properties.
It  yields  to an almost optimal mean square error simultaneously  over a wide range of  function spaces (including Sobolev and Besov spaces) and, at the same time, has a  smoothing effect  with respect to any of the norms in
these spaces. Hence soft-thresholding  automatically
adapts to the unknown smoothness of the desired signal~\cite{Don95,DonJoh95}.


Any  particular choice of the  thresholding parameter
$T_n$ is a tradeoff between signal
approximation and noise reduction:
A large threshold  removes much of  the noise but also removes parts of
the signal. Hence  a reasonable threshold choice  should be as small as
possible under the side constrained that a significant amount of the
noise is removed. The smaller the actual threshold is taken, the more emphasis is given on signal representation and the less emphasis on noise reduction.
A commonly used threshold is the so called \emph{universal threshold} $T_n =  \sigma \sqrt{2 \log n}$ as proposed in the seminal work~\cite{Don95},
where the following result is  shown.

\begin{theorem}[Denoising property of wavelet soft-thresholding~\cite{Don95}]\label{thm:donoho}\mbox{}\\
Suppose that $\dict_n$ are consistent with an underlying orthonormal wavelet basis
$\dict$ on $[0,1]$ having $m$-times continuously differentiable elements and
$m$ vanishing moments,
that $\signal_n \kl{k}= u \kl{k/n}$, for $k=0, \dots n-1$, denote point samples  of a function $u \colon [0,1] \to \R$ and that $\hat \signal_n$ are constructed by~\req{wave-soft} with the universal threshold $T_n = \sigma \sqrt{2 \log n}$.
Then, there exists a special smooth interpolation of $\hat \signal_n$
producing a function $u_n^* \colon [0,1] \to \R$.
Further, there are universal constants $\skl{\pi_n}_n \subset (0,1)$
with $\pi_n \to 1$ as $n = 2^J \to \infty$, such that for any Besov space
$\mathcal B_{p,q}^r$ which embeds continuously
into $C[0,1]$ (hence $r>1/p$) and for which $\dict$ is an unconditional basis (hence $r<m$),
\begin{equation}\label{eq:wavelet-denoise}
 	\wk \set{\norm{\signal_n^*}_{\mathcal B_{p,q}^r}
	\leq c(\mathcal B_{p,q}^r, \dict) \norm{\signal}_{\mathcal B_{p,q}^r} \,; \forall u \in \mathcal B_{p,q}^r }
	\geq  \pi_n   \,,
\end{equation}
for constants $c(\mathcal B_{p,q}^r, \dict)$
depending on  $\mathcal B_{p,q}^r$ and $\dict$ but neither
on $u$ nor on  $n$.
\end{theorem}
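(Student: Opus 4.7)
The strategy is to work on a high-probability ``clean event'' on which every noise wavelet coefficient is bounded by the threshold, to establish pointwise (in $(j,k)$) domination of the soft-thresholded coefficients by the true signal coefficients, and to transfer this domination to the Besov norm via its wavelet characterization.

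First I would bound the clean event. Define $A_n := \mset{\max_{(j,k) \in \Om_n} \mabs{\minner{\wave_{j,k}^{n}}{\noise_n}} \leq T_n}$. Since $\mkl{\wave_{j,k}^{n}}$ is orthonormal, each inner product is $N(0,\sigma^2)$-distributed. The Mill's ratio bound $\wk\sset{|Z|>t} \leq \sqrt{2/\pi}\, t^{-1} e^{-t^2/2}$ applied with $t = \sqrt{2\log n}$, together with a union bound over the $n$ indices in $\Om_n$, yields $\wk\sset{A_n^c} \leq 1/\sqrt{\pi\log n}$, so $\pi_n := \wk\sset{A_n} \to 1$.

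Next I would establish pointwise domination on $A_n$. There $\mabs{\Data_n(j,k) - \Para_n(j,k)} \leq T_n$ for $\Para_n(j,k) := \minner{\wave_{j,k}^n}{\signal_n}$, and a brief case split on the piecewise-linear formula $\soft(y,T_n) = \operatorname{sign}(y)(|y| - T_n)_+$ shows that whenever $|y - x| \leq T_n$ one has $|\soft(y,T_n)| \leq |x|$: if $|y| \leq T_n$ then $\soft(y,T_n) = 0$, while otherwise $|\soft(y,T_n)| = |y| - T_n \leq |x|$ by the reverse triangle inequality. Hence on $A_n$ the soft-thresholded coefficients satisfy $|\soft(\Data_n(j,k), T_n)| \leq |\Para_n(j,k)|$ for every $(j,k)$. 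I would then invoke the wavelet characterization of Besov norms: for $\wave$ an unconditional basis of $\mathcal B_{p,q}^r$, the norm $\norm{f}_{\mathcal B_{p,q}^r}$ is equivalent to a weighted $\ell^q(\ell^p)$ sum of the coefficients $\mabs{\minner{\wave_{j,k}}{f}}$, which is manifestly monotone in those magnitudes. Using the promised smooth interpolation $\signal_n^*$ of $\hat \signal_n$ (and an analogous interpolation of $\signal_n$) whose Besov-controlling coefficients agree with $\soft(\Data_n(j,k),T_n)$ and $\Para_n(j,k)$ respectively, the coefficient-wise bound gives $\norm{\signal_n^*}_{\mathcal B_{p,q}^r} \leq c_1 \norm{\signal_n^{**}}_{\mathcal B_{p,q}^r}$ on $A_n$, and consistency of $\dict_n$ with $\dict$ bounds the right-hand side by $c(\mathcal B_{p,q}^r,\dict)\,\norm{\signal}_{\mathcal B_{p,q}^r}$, yielding \req{wavelet-denoise}.

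I expect the main obstacle to be the analytic part --- the construction of the interpolant $\signal_n^*$ and the coefficient-monotone wavelet characterization of $\mathcal B_{p,q}^r$ on the bounded interval $[0,1]$ with boundary-corrected wavelets --- rather than the probabilistic part. The Gaussian maximal inequality is matched precisely to the cardinality $n$, which is the whole rationale behind the universal threshold $\sigma \sqrt{2 \log n}$; the side conditions $r > 1/p$ and $r < m$ enter exactly at this analytic step, the first making pointwise sampling meaningful via the embedding into $C[0,1]$, the second guaranteeing that $\wave$ is an unconditional basis of the target Besov space.
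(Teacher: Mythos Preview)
Your proposal is correct and follows exactly the approach the paper sketches: the paper does not prove Theorem~\ref{thm:donoho} in full (it is quoted from~\cite{Don95}) but explains immediately afterward that it ``can be derived from the denoising property''~\req{noise-remove}, which is your clean event $A_n$, combined with the shrinkage property of soft-thresholding (your case split showing $|\soft(y,T_n)|\leq|x|$ whenever $|y-x|\leq T_n$) and the coefficient-wise monotonicity of the Besov norm in its wavelet characterization---precisely the ingredients the paper later isolates in the proof of Theorem~\ref{thm:smooth} and the remark following it. Your identification of the analytic interpolation step as the only nontrivial residual is also accurate; that construction is deferred to~\cite{Don95} here as well.
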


Theorem~\ref{thm:donoho} states that the estimate $\signal_n^*$ is, with
probability tending to one, simultaneously  as smooth as $\signal$
for all smoothness spaces $\mathcal B_{p,q}^r$.
This result can be derived from the denoising  property
(see \cite{Don95,Joh11,JohSil97} and also Section~\ref{sec:denoising})
\begin{equation}  \label{eq:noise-remove}
	\wk \set{
	\max_{(j,k) \in \Om_n} \abs{ \minner{\wave_{j,k}^{n}}{\noise_n}}
	\leq    \sigma \sqrt{2 \log n} }
	\geq \pi_n \to 1 \qquad \text{ as }
	 n \to \infty \,.
\end{equation}
For an orthonormal basis, the noise coefficients
$\sinner{\wave_{j,k}^{n}}{\noise_n} \sim N (0, \sigma^2)$ are
independently distributed. Hence Equation~\req{noise-remove} is a consequence from standard extreme value results for  independent normally distributed
random vectors \cite{DehFer06,LeaLinRoo83}.
Extreme value theory also implies the limiting Gumbel law
\begin{equation} \label{eq:gumbel-wave}
\lim_{n \to \infty}
\wk\set{ \max_{\om \in \Om_n} \abs{\minner{\base_{\om}^{n}}{\noise_n}}
\leq
\sigma  \sqrt{2 \log n}
+
\sigma \,\frac{2z-  \log\log n - \log\pi}{2\sqrt{2 \log n}}}
=
\exp\kl{-e^{-z} } \,,
\end{equation}
uniformly in  $z \in \R$. This even allows to exactly characterize all thresholding sequences
$T_n$  yielding a denoising property like \req{noise-remove} with $T_n$
in place of $\sigma \sqrt{2 \log n}$.

In the case that a redundant frame is considered instead of an orthonormal wavelet
basis,  then the empirical coefficients are no more linear independent and
limiting result like \req{gumbel-wave} are much harder to come up with.
In this paper we verify that a similar distributional result as in  \req{gumbel-wave}
holds for a wide class of redundant frames
with $n$ replaced by the number of frame elements.
This class  is shown to include non-orthogonal wavelets, curvelet frames and
unions of bases (see Theorems~\ref{thm:wave}, \ref{thm:cs} and \ref{thm:curve}).
Roughly speaking, the reason is, that the  redundancy is of these frames weak enough that it asymptotically
vanishes in a statistical sense and the system behaves as an independent system.
 However, we also an important  example (in the form of the  translational wavelet system; see Theorem~\ref{thm:ti}) which shows that highly redundant systems may show a different asymptotic behaviour.

Our work is motivated by the well known observation that the universal threshold sigma $ \sigma \sqrt{2 \log n }$
often is found to be  too  large in applications, hence including too few coefficients into the final estimator (see \cite{AntFan01,DonJoh95,Mal09}).  This recently has initiated further research on refined thresholding methods and we would like to shed some light on this phenomenon for a large class of frame systems by providing a refined  asymptotics as in \req{gumbel-wave} in addition to results of the type \req{noise-remove}.
We also provide a selective review on current thresholding methodology where we focus on the link between
statistical extreme value theory and thresholding techniques.

\subsection{Frame Soft-Thresholding: Main Results}
\label{sec:main}

For any  $n \in \N$, let $\dict_n = \mkl{\base_{\om}^{n}: \om\in\Om_n}$ denote  a frame of $\R^{\I_n}$, where $\Om_n$ is a finite index set, that consists of
normalized frame elements (that is, $\snorm{\base_{\om}^{n}}=1$ holds for all $\om \in \Om_n$) and has frame bounds $a_n \leq b_n$ (compare Section~\ref{sec:notation}). Our main results concerning  thresholding
estimation in the frame $\dict_n$ will hold for  asymptotically stable frames,
which  are defined as follows.

\begin{definition}[Asymptotically stable frames]\label{def:stab}\mbox{}\\
We say that a family of frames $\skl{\dict_n}_{n \in \N}$ with normalized frame elements is
\emph{asymptotically stable}, if the following  assertions
hold true:
\begin{enumerate}[label=(\roman*),topsep=0em]
\item \label{it:frame2}
For some  $\rho \in\kl{0,1}$,
$\abs{\sset{\skl{\om,\om'} \in \Om_n^2:
\sabs{\sinner{\base_{\om}^{n}}{\base_{\om'}^{n}}} \geq  \rho }  }
= o\kl{\frac{\abs{\Om_n}}{\sqrt{\log \abs{\Om_n}}}}$ as $n\to \infty$.

\item \label{it:frame3}
The upper frame bounds $b_n$ are uniformly bounded, i.e., $B:= \sup\set{ b_n: n \in \N}< \infty$.
\end{enumerate}
\end{definition}

The following Theorem~\ref{thm:main} is the key to most results  of this paper.  It states,   that after proper normalization the distribution of $\max_{\om \in \Om_n} \sabs{\sinner{\base_{\om}^{n}}{\noise_n}}$ converges to the Gumbel distribution -- provided that   the frames are asymptotically stable.

\begin{theorem}[Limiting distribution for asymptotically stable frames]
\label{thm:main}\mbox{}\\
Assume that $\skl{\dict_n}_{n \in \N}$ is
an asymptotically stable family of frames, and let $\skl{\noise_n}_{n\in\N}$ be a sequence of  random vectors in $\R^{\I_n}$ with independent $N\skl{0,\sigma^2}$-distributed entries.
Then, for every $z \in \R$,
\begin{equation}
\label{eq:gumbel-frame}
\lim_{n \to \infty}
\wk\set{ \max_{\om \in \Om_n} \abs{\minner{\base_{\om}^{n}}{\noise_n}}
\leq
\sigma  \sqrt{2 \log \abs{\Om_n}}
+
\sigma \,\frac{2z-  \log\log \abs{\Om_n} - \log\pi}{2\sqrt{2 \log \abs{\Om_n}}}}
=
\exp\kl{-e^{-z} } \,.
\end{equation}
The function $z\mapsto \exp\mkl{-e^{-z} }$
is known as the  Gumbel distribution.
\end{theorem}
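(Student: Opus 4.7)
The plan is to reduce the theorem, via a classical Gaussian comparison (Berman's normal comparison lemma), to the extreme value asymptotics for independent standard Gaussians, and then to show that the comparison error vanishes under the hypotheses on the frame.

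First, I normalize. Set $X_\om^n := \sinner{\base_\om^n}{\noise_n}/\sigma$, so that $(X_\om^n)_{\om \in \Om_n}$ is a centred Gaussian vector with unit variances and correlations $r_{\om,\om'}^n := \sinner{\base_\om^n}{\base_{\om'}^n}$. Writing $u_n(z)$ for the normalising sequence inside the probability in \req{gumbel-frame} divided by $\sigma$, a Mills' ratio computation gives $2\abs{\Om_n}(1-\Phi(u_n(z))) \to e^{-z}$; this is precisely the normalisation under which the Gumbel limit holds for the maximum of $\abs{\Om_n}$ iid standard normals in absolute value, so that $(2\Phi(u_n(z))-1)^{\abs{\Om_n}} \to \exp(-e^{-z})$. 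It therefore suffices to prove that
\[
\mabs{\wk\set{\max_{\om \in \Om_n} \abs{X_\om^n} \leq u_n(z)} - (2\Phi(u_n(z))-1)^{\abs{\Om_n}}} \longrightarrow 0 \qquad \text{as } n \to \infty.
\]

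Second, I invoke Berman's normal comparison lemma in its two-sided form (applied to the rectangular event $\{X_\om^n \in [-u_n(z),u_n(z)]\}_{\om \in \Om_n}$) to compare the correlated Gaussian vector $(X_\om^n)$ with an independent copy. This produces the standard estimate
\[
\mabs{\wk\set{\max_{\om \in \Om_n} \abs{X_\om^n} \leq u_n(z)} - (2\Phi(u_n(z))-1)^{\abs{\Om_n}}}
\leq C \sum_{\om \neq \om'} \abs{r_{\om,\om'}^n} \exp\kl{-\frac{u_n(z)^2}{1 + \abs{r_{\om,\om'}^n}}},
\]
reducing the whole problem to showing that this pair sum is $o(1)$.

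Third, I split the sum into three regimes by correlation magnitude. Let $\rho \in (0,1)$ be as supplied by Definition~\ref{def:stab}\ref{it:frame2}, and choose an auxiliary threshold $\rho_1 \in (0,1/3)$ with $\rho_1 < \rho$. For near-collinear pairs ($\abs{r_{\om,\om'}^n} \geq \rho$), condition \ref{it:frame2} bounds the count by $o(\abs{\Om_n}/\sqrt{\log\abs{\Om_n}})$ while each summand is at most $\exp(-u_n(z)^2/2) = O(\sqrt{\log\abs{\Om_n}}/\abs{\Om_n})$, producing an $o(1)$ contribution. For intermediate pairs ($\rho_1 \leq \abs{r_{\om,\om'}^n} < \rho$), Parseval together with condition \ref{it:frame3} gives $\sum_{\om'}\abs{r_{\om,\om'}^n}^2 \leq B$ for every $\om$, hence $\sum_{\om \neq \om'} \abs{r_{\om,\om'}^n}^2 \leq B\abs{\Om_n}$; the count of pairs in this regime is therefore at most $B\abs{\Om_n}/\rho_1^2$, each summand is bounded by $\rho \exp(-u_n(z)^2/(1+\rho))$, and the total is of order $\abs{\Om_n}^{1-2/(1+\rho)}$ times a polylogarithmic factor, which is $o(1)$ since $\rho < 1$. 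For small pairs ($\abs{r_{\om,\om'}^n} < \rho_1$), Cauchy--Schwarz combined with $\sum \abs{r^n}^2 \leq B\abs{\Om_n}$ gives $\sum_{\om \neq \om'} \abs{r_{\om,\om'}^n} \leq \abs{\Om_n}^{3/2}\sqrt{B}$, and each summand is bounded by $\exp(-u_n(z)^2/(1+\rho_1))$, so the contribution is of order $\abs{\Om_n}^{3/2 - 2/(1+\rho_1)}$ times a polylogarithmic factor, which is $o(1)$ since $\rho_1 < 1/3$. Summing the three regimes makes Berman's bound vanish, and the theorem follows.

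The main difficulty lies in balancing the two parts of Definition~\ref{def:stab}: condition~\ref{it:frame2} controls only the number of near-collinear frame pairs but leaves the bulk of correlations unconstrained, whereas condition~\ref{it:frame3} controls $\sum \abs{r^n}^2$ globally but cannot by itself exclude a dominant contribution from intermediate correlations. The three-way partition is the clean way to combine them; the delicate points are that in the intermediate regime one truly needs $\rho < 1$ (so that the exponential decay beats the count bound $B\abs{\Om_n}/\rho_1^2$), and in the small regime the splitting parameter $\rho_1$ must be chosen strictly below $1/3$ (so that the $\abs{\Om_n}^{3/2}$ Cauchy--Schwarz loss is beaten by the exponential gain). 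Without the frame bound \ref{it:frame3} one would be forced to bound the count of small-correlation pairs by $\abs{\Om_n}^2$, which is too crude to make Berman's sum vanish.
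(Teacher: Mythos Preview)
Your proof is correct and follows essentially the same route as the paper's: a two-sided Gaussian comparison inequality followed by the identical three-regime split of the correlation sum, with the same thresholds $\rho$ and $\rho_1<1/3$ and the same use of the uniform frame bound to obtain $\sum_{\om'}\sabs{r_{\om,\om'}^n}^2\le B$. One caveat on attribution: the comparison bound you write (with no singular factor $(1-r^2)^{-1/2}$) is the Li--Shao refinement rather than the classical Berman inequality, and the paper in fact derives its two-sided form for $\enorm_\infty$ explicitly from Li--Shao by doubling the vector to $(X,-X)$; the classical version would be useless here since asymptotic stability does not exclude pairs with $\sabs{r_{\om,\om'}^n}=1$.
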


\begin{proof}
See Section~\ref{sec:thm:main}.
\end{proof}

In the case that $\dict_n$ are orthonormal bases,
results similar to the one of Theorem~\ref{thm:main} follow
from standard extreme value results
(see, for example, \cite{DehFer06,LeaLinRoo83}) and are  well
known in the wavelet  community (see, for example,~\cite{Don95,Joh11,Mal09}).
However,  neither the convergence  of  the maxima  including
absolute values (which is the actually relevant case)
nor the use of redundant systems are  covered by these results.
In Section~\ref{sec:ez} we shall verify that  many
redundant frames, such as non-orthogonal wavelet frames, curvelets and unions
of bases,  are asymptotically stable and hence the limiting Gumbel law of Theorem~\ref{thm:main} can be applied.
Based on this limiting extreme value distribution we provide
an exact characterisation of all thresholds $T_n$ satisfying
a denoising property similar to the one of \req{noise-remove}
for general frame thresholding;  see Section~\ref{sec:evd-frame}
for details.

Suppose that  $(\alpha_n)_{n \in \N}$ is a  sequence in $(0,1)$  converging
to $\alpha  \in [0,1)$, let $z_n$ satisfy $\exp\kl{-e^{-z_n}} = \alpha_n$  and let $ \signal_n$ denote the
 wavelet soft-thresholding estimate with the  threshold
 \begin{equation}\label{eq:evt-wave}
T_n
 =
\sigma \sqrt{2 \log \abs{\Om_n}} +
\sigma  \, \frac{2 z_n  -   \log\log
\abs{\Om_n} - \log \pi}{ 2\sqrt{2 \log \abs{\Om_n}}}\,.
\end{equation}
According to Theorem~\ref{thm:main}, the probability that $\signal_n$ is contained in
$
R_n=  \sset{ \bar \signal_n
\colon \sabs{\sinner{\base_{j,k}}{\data_n - \bar \signal_n}  }  \leq T_n \text{ for all } \om \in \Om_n}
$ tends to $1-\alpha$ as $n \to \infty$.
Hence the sets $R_n$ define asymptotically sharp confidence regions around the given data for any significance level $\alpha$;
see Section~\ref{sec:alphathresh} for details.

The proof  of Theorem~\ref{thm:main}
relies on new extreme value results for dependent
chi-square distributed random variables (with one degree of freedom) which we establish  in Section~\ref{ap:evd}. In the field of statistical extreme value
theory, the following definition is  common.

\begin{definition}[Gumbel type]\label{def:gumbeltype}\mbox{}\\
A sequence  $\kl{M_n}_{n \in \N}$
of real valued random variables  is said to be of  \emph{Gumbel type}
(or to be of extreme value typ I),  if there are real valued normalizing sequences
$\kl{a_n}_{n \in \N}$ and $\kl{b_n}_{n \in \N}$, such that the
limit $\wk \set{ M_n  \leq a_n  z + b_n } \to \exp\mkl{-e^{-z} }$  as $n\to \infty$ holds point-wise for all $z \in \R$ (and therefore uniformly).
\end{definition}

Using the notion just introduced, Theorem~\ref{thm:main} states that
$\max_{\om \in \Om_n} \sabs{\sinner{\base_{\om}^{n}}{\noise_n} }$ is of
Gumbel type,  with normalizing sequences $\sigma a\mkl{\chi,\abs{\Om_n}}$ and $\sigma b\mkl{\chi,\abs{\Om_n}}$, where
\begin{align}\label{eq:a-abs}
a\mkl{\chi,\abs{\Om_n}}  &:=  \frac{1}{\sqrt{2 \log \abs{\Om_n}}} \,,
\\ \label{eq:b-abs}
b\mkl{\chi,\abs{\Om_n}}  &:= \sqrt{2 \log \abs{\Om_n}} -
 \frac{ \log \log \abs{\Om_n} +   \log \pi} {2 \sqrt{2\log \abs{\Om_n}}} \,.
\end{align}
As shown in Theorem~\ref{thm:mainb}, the  maxima
of $\sinner{\base_{\om}^{n}}{\noise_n}$
without taking absolute  values are also of Gumbel type.
We emphasize, however, that the   corresponding  normalizing sequences differ from those
required for the maxima
with absolute values.
Indeed,   $\max_{\om \in \Om_n} \sabs{\sinner{\base_{\om}^{n}}{\noise_n}}$ behaves as the maximum of  $2 \abs{\Om_n}$ (opposed to $\abs{\Om_n}$) independent standard normally  distributed random  variables; compare with Remark~ \ref{rem:normalizing}.
The different fluctuation behaviour of  the maxima with and without absolute values
is not resembled by Equation~\req{noise-remove}, which is exactly
the same for the maxima with and without absolute values.
Only in a  refined distributional  limit \req{gumbel-frame} this difference
becomes visible.
Moreover, in the case that the frames $\dict_n$ are redundant, no result similar
to Theorem~\ref{thm:main} is known at all.

Asymptotical stability typically fails for frames  without an underlying infinite dimensional frame.
A prototype for  such a family is the  dyadic  translation invariant wavelet transform (see Section~\ref{sec:ti}).
In this case, the redundancy of  the translation invariant wavelet system
increases boundlessly with increasing $n$,  which implies that the corresponding upper frame bounds tend to infinity as $n \to \infty$.
We indeed prove the following counterexample
if  Condition~\ref{it:frame3} in Definition~\ref{def:stab}
fails to hold.

\begin{theorem}[Tighter bound for translation invariant wavelet systems] \label{thm:ti2}\mbox{}\\
Suppose that $\skl{\wave_\om}_{\om \in \Om_n}$ is a
discrete translation invariant wavelet system with unit norm elements generated by a mother wavelet $\wave$ that is continuously
differentiable, and let $\skl{\noise_n}_{n\in\N}$ be a sequence of  random vectors in $\R^{\I_n}$ with independent $N\skl{0,\sigma^2}$-distributed entries.
Then, for some constant $c>0$ and all $z\in \R$
we have
\begin{equation*}
\liminf_{n \to \infty}
\wk\set{ \max_{\om \in \Om_n} \abs{\minner{\wave_{\om}^{n}}{\noise_n}}
\leq
\sqrt{2 \log n}
+
\frac{z + \log \kl{ c/\pi}}{\sqrt{2\log n}}
}
\geq  \exp\kl{-e^{-z} }
 \,.
\end{equation*}
\end{theorem}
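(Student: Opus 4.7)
The plan is to decompose $\Om_n$ into its $J=\log_2 n$ dyadic scales, to obtain a Pickands-type tail asymptotic for the maximum at each scale, and to combine the scale-wise bounds by a union bound whose contributions form a convergent geometric series dominated by the finest scale.

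First I would write $\Om_n=\bigcup_{j=0}^{J-1}\Om_n^{(j)}$, where $\Om_n^{(j)}$ indexes the $n$ cyclic translates at scale $j$, and set $X_{j,k}:=\inner{\wave_{j,k}^n}{\noise_n}$. Each sequence $(X_{j,k})_k$ is cyclically stationary with variance $\sigma^2$ and covariance $\rho_j(h)=\inner{\wave_{j,0}^n}{\wave_{j,h}^n}$. Continuous differentiability of $\wave$ ensures that the normalized process has a well-defined second spectral moment $\lambda_j\propto 2^{2j}\norm{\wave'}_2^2/n^2$ and that Berman's mixing condition $\rho_j(h)\log h\to 0$ holds uniformly across scales. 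The Pickands--Berman tail formula for smooth stationary Gaussian sequences (see, e.g., \cite{LeaLinRoo83}, Chapter 12, applied to absolute values) then yields
\begin{equation*}
\wk\set{\max_{k}\sabs{X_{j,k}}>u_n}=2n\sqrt{\lambda_j/(2\pi)}\,e^{-u_n^2/(2\sigma^2)}\kl{1+o(1)}
\end{equation*}
for any $u_n\to\infty$ with $u_n/(\sigma\sqrt{2\log n})=1+o(1)$.

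Next I would substitute $u_n=\sqrt{2\log n}+(z+\log(c/\pi))/\sqrt{2\log n}$, with $c$ chosen proportional to $\norm{\wave'}_2$ (the statement implicitly takes $\sigma=1$ so that units match). A direct expansion gives $e^{-u_n^2/2}=(\pi/(cn))\,e^{-z}(1+o(1))$, so that at the finest scale $j=J-1$ the prefactor $n\sqrt{\lambda_j/(2\pi)}$ precisely balances the exponential to produce $e^{-z}(1+o(1))$. At coarser scales $j<J-1$, the factor $\sqrt{\lambda_j}$ drops by $2^{-(J-1-j)}$, yielding geometric decay in $j$. Summing via the union bound,
\begin{equation*}
\wk\set{\max_\om\sabs{\inner{\wave_\om^n}{\noise_n}}>u_n}\leq \sum_{j=0}^{J-1}\wk\set{\max_k\sabs{X_{j,k}}>u_n}\leq C\,e^{-z}(1+o(1))
\end{equation*}
with $C<\infty$ independent of $n$. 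Re-absorbing $C$ into the shift constant ($\log(c/\pi)\mapsto\log(Cc/\pi)$) and using $1-\exp(-e^{-z})=e^{-z}+O(e^{-2z})$ yields the stated lower bound $\liminf_n\wk\set{M_n\leq u_n}\geq\exp(-e^{-z})$.

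The hard part will be establishing the scale-wise Pickands--Berman asymptotic rigorously and uniformly in $j$. In particular, identifying the curvature $\lambda_j$ of the discrete autocorrelation $\rho_j$ at the origin requires the $C^1$-regularity of $\wave$ so that the quadratic expansion is valid with the correct spectral interpretation; similarly $C^1$-regularity controls the tail decay of $\rho_j$ required by Berman's mixing condition. A secondary complication is handling boundary effects at the coarsest scales, where the wavelet occupies a large fraction of the cyclic domain and the discrete-to-continuous comparison becomes delicate; fortunately those scales contribute subdominantly in the geometric sum, so crude bounds suffice and the asserted $c>0$ can be obtained without tracking exact constants through the argument.
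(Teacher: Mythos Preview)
Your union-bound strategy cannot deliver the Gumbel lower bound as stated. After your geometric sum and absorption of constants you arrive at
\[
\limsup_{n\to\infty}\wk\set{\max_\om\sabs{\sinner{\wave_\om^n}{\noise_n}}>u_n}\leq e^{-z},
\]
i.e.\ $\liminf_n \wk\set{M_n\leq u_n}\geq 1-e^{-z}$. But $1-e^{-z}\leq \exp(-e^{-z})$ for \emph{every} $z\in\R$ (this is just $1-x\leq e^{-x}$ with $x=e^{-z}$), so your bound is strictly weaker than the one you are asked to prove, and no further shift of the constant $c$ can reverse this inequality. Your final sentence ``using $1-\exp(-e^{-z})=e^{-z}+O(e^{-2z})$ yields the stated lower bound'' is simply in the wrong direction: you would need $1-e^{-z}\geq \exp(-e^{-z})$, which never holds. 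A first-order (Bonferroni) tail bound of the form $Ce^{-z}$ can at best give $1-Ce^{-z}$; to recover the Gumbel shape $\exp(-e^{-z})$ you need a \emph{product} structure, not a sum.

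The paper obtains exactly this product structure. It first embeds the translation invariant coefficients into an oversampled array $X_n(j,\ell)=\sinner{\wave_{j,2^j\ell/n}}{\eta}$, $\ell=0,\dots,2^jn-1$, whose within-scale covariance $\kappa((\ell-\ell')/n)$ is \emph{the same} at every scale. It then zeroes out all inter-block and inter-scale correlations to produce $Y_n$ consisting of $\sum_j 2^j\sim n$ independent blocks, each distributed as $n$ equispaced samples of the fixed process $X(t)=\sinner{\wave_{0,t}}{\eta}$ on $[0,1]$; a Sidak/Gaussian-correlation comparison gives $\wk\{\|X_n\|_\infty\leq T\}\geq\wk\{\|Y_n\|_\infty\leq T\}$. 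Now independence yields
\[
\wk\{\|Y_n\|_\infty\leq T_n\}=\bigl(1-\wk\{\max_{[0,1]}|X|>T_n\}\bigr)^{\,n},
\]
and the smooth-process tail $\wk\{\max_{[0,1]}|X|>T_n\}\sim (c/\pi)e^{-T_n^2/2}$ from continuous extreme value theory turns this into $(1-e^{-z}/n)^n\to\exp(-e^{-z})$. The exponentiation is precisely what your additive union bound over scales cannot reproduce.

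A secondary issue: your claim that Berman's condition $\rho_j(h)\log h\to 0$ holds uniformly in $j$ is false at coarse scales. Since $\rho_j(h)\approx\kappa(h2^j/n)$, for fixed $j$ and fixed $h$ one has $\rho_j(h)\to\kappa(0)=1$ as $n\to\infty$. You could patch the per-scale upper bound by a Rice-formula argument (discrete max $\leq$ continuous max, then $\wk\{\max_{[0,2^j]}|X|>u\}\leq 2\phi(u)(u^{-1}+c\,2^j)$ without any mixing hypothesis), but this still leaves you with only a first-order tail bound and hence the same gap as above.
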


\begin{proof}
This follows from Theorem~\ref{thm:ti},
that we  proof  in Section~\ref{ap:thm:ti}.
\end{proof}

Theorem \ref{thm:ti2} shows that the
maximum of the translation invariant wavelet
coefficients is strictly smaller (in a distributional sense; see Section~\ref{sec:ti})
than the maximum of
an asymptotically stable frame with $\abs{\Om_n} = n \log n$
elements and therefore the result of Theorem~\ref{thm:main} does not hold for a
translation invariant wavelet system. Moreover, Theorem~\ref{thm:ti2}
shows that there exists a thresholding sequence being  strictly smaller than
$\sqrt{2 \log \abs{\Om_n}}$  yields asymptotic
smoothness; see Section~\ref{sec:ti} for details.
This also reveals the necessity of a detailed extreme value
analysis of the empirical noise coefficients in the case of
redundant frames.

\subsection{Outline}

In the following Section~\ref{sec:thresh} we introduce some notation used throughout this paper. In particular, we define the soft-thresholding estimator in redundant frames.
The core part of this paper is Section~\ref{sec:evd-frame}, where we proof the asymptotic distribution  of the frame coefficients claimed in  Theorem~\ref{thm:main}. This result is then applied to define  extreme value based thresholding rules and corresponding sharp confidence regions.
Moreover, in this section we  show that the resulting thresholding estimators satisfy both, oracle inequalities for the mean square error and smoothness estimates for a wide class of smoothness measures.
Our proofs require new facts from statistical extreme value theory for the maxima of  absolute values of dependent normal random variables that we derive in
Section~\ref{ap:evd}.
Finally, in section in Section~\ref{sec:ez} discuss in detail several examples, including, non orthogonal frames,  biorthogonal wavelets, curvelets and unions of bases in detail

\section{Thresholding in Redundant Frames}
 \label{sec:thresh}

For the following recall the model~\req{problem}
and write $\noise_n = \mkl{\noise_n\kl{k}: k\in \I_n} \in \R^{\I_n}$ for the noise vector in~\req{problem}.
We assume throughout that the variance $\sigma^2$ of the noise
is given. Fast and efficient methods for estimating the variance are well known
(see, for example,  \cite{BroLev07,DetMunWag98,HalKayTit90,HalMur90} for $d=1$
 and \cite{MunBisWagFre05}  for $d  \geq 2$).

Throughout this paper all estimates  for the signal $\signal_n$
are based on thresholding  the coefficients of the given data $\data_n$
with respect to prescribed frames of analyzing elements.

\subsection{Frames}
\label{sec:notation}

In the sequel $\dict_n := \mkl{\base_{\om}^{n}: \om\in\Om_n}
\subset \R^{\I_n}$
denotes a frame of  $\R^{\I_n}$, with $\Om_n$ being a finite index set. Hence  there exist constants
$0< a_n \leq b_n< \infty$, such that
\begin{equation} \label{eq:frame-prop}
	\kl{\forall \signal_n \in \R^{\I_n}}
	\qquad
	a_n \mnorm{\signal_n}^2 \leq
	\sum_{\om \in \Om_n}
	\mabs{\minner{\base_{\om}^{n}}{\signal_n}}^2
	\leq b_n \mnorm{\signal_n}^2 \,.
\end{equation}
(Here  $\enorm$ is the Euclidean  norm on $\R^{\I_n}$ and
$\einner$ the corresponding inner product.) The largest  and
smallest numbers $a_n$ and $b_n$, respectively, that
satisfy~\req{frame-prop} are referred to as frame bounds.
Notice that in a finite dimensional setting any family
that spans the whole space is a frame.

We further denote by $\Base_n \colon \R^{\I_n}  \to \R^{\Om_n}$ the operator that maps the signal $\signal_n \in \R^{\I_n}$ to the  analyzing coefficients with respect  to the given
frame,
 \begin{equation*}
\kl{\forall \om \in \Om_n}
\qquad
\kl{\Base_n\signal_n}\skl{\om} := \inner{\base_{\om}^{n}}{\signal_n}\,.
\end{equation*}
The mapping $\Base_n$ is  named the
\emph{analysis operator},
its adjoint   $\Base_n^\ast$ the  \emph{synthesis operator},
and $\Base_n^\ast\Base_n$  the \emph{frame operator}
corresponding to $\dict_n$.

The frame property \req{frame-prop} implies that the frame operator $\Base_n^\ast\Base_n\colon \R^{\I_n} \to \R^{\I_n}$ is an invertible
linear mapping.  Hence, for any $\om \in \Om_n$, the elements
\begin{equation*}
\dbase_{\om}^{n} := \kl{\Base_n^\ast\Base_n}^{-1}
\base_{\om}^{n}
\end{equation*}
are well defined  and the family $\mkl{\dbase_{\om}^{n}: \om\in\Om_n}$
is again a frame of $\R^{\I_n}$.
It is called the \emph{dual frame}
and has frame bounds  $1/b_n \leq  1/a_n$.

Finally, we denote by  $\Base_n^+ :=  \mkl{\Base_n^\ast\Base_n}^{-1}\Base_n^\ast$  the pseudoinverse of the analysis operator
$\Base_n$.
Due to linearity and the definitions of the pseudoinverse and the dual frame elements, we have the identities
\begin{equation} \label{eq:inv}
\kl{\forall \signal_n \in \R^{\I_n}}
\qquad
\signal_n
=
\Base_n^+ \Base_n \signal_n
=
\sum_{\om\in \Om_n}
\inner{\base_{\om}^{n}}{\signal_n}
\dbase_{\om}^{n}
  \,.
\end{equation}
In particular, the mapping $\Base_n^+$ is the synthesis  operator corresponding to the dual frame.
Equation~\req{inv} provides  a simple  representation of
the given signal in terms of its  analyzing coefficients.
This serves as basis of thresholding estimators defined and studied  in the following subsection.
For further details  on frames see, for example,  \cite{Chr08,Mal09}.

\begin{remark}[Thresholding in a subspace]\label{rem:frame}
It is not essential at all, that $\dict_n$
is a frame of the whole image space $\R^{\I_n}$.
In fact, in typical  thresholding  applications, such as  in
wavelet denoising, the space $\R^{\I_n}$
naturally  decomposes  into a low resolution space
having  small fixed dimension  and a detail space
having  large dimension that increases with $n$.
The  soft-thresholding procedure is then only applied to the signal part
in the detail space and hence it is sufficient to assume that $\dict_n$
is a frame therein.  In order to avoid unessential technical complication we present our results for the case of  frames of the whole image space.
In the concrete  applications presented in Section \ref{sec:ez} the thresholding will indeed only be performed in some subspace;   all results carry over to such a situation in a straightforward manner.\end{remark}

\subsection{Thresholding Estimation}
 \label{sec:soft}

By  applying  $\Base_n$ to both sides of~\req{problem},
the original denoising problem  in the signal space $\R^{\I_n}$
is transferred into the  denoising problem
\begin{equation} \label{eq:problem2}
	\Data_n \skl{\om}
	=
	\Para_n \skl{\om} +  \kl{\Base_n\noise_n}  \skl{\om} \,,
	\qquad \text{ for } \om  \in  \Om_n   \,,
\end{equation}
in the possibly higher dimensional coefficient space
$\R^{\Om_n}$.  Here and in the following we denote by
\begin{equation} \label{eq:DataPara}
\Data_n \skl{\om}
:= \inner{\base_{\om}^{n}}{\data_n} \,
\quad \text{ and } \quad
\Para_n \skl{\om}
:= \inner{\base_{\om}^{n}}{\signal_n} \,,
\end{equation}
the coefficients of the data $\data_n$ and  the signal
$\signal_n$  with respect to the given frame.
The  following  elementary Lemma \ref{lem:covariance}
states that the noise term in~\req{problem2}
is again a centered normal vector   but  has possibly non-vanishing
covariances. Indeed it implies that the entries of $\Base_n\noise_n$ are not uncorrelated and hence not  independent, unless $\dict_n$  is an
orthogonal basis.

\begin{lemma}[Covariance matrix]\label{lem:covariance}
Let $\noise_n$ be a random vector in the image space $\R^{\I_n}$
with independent $N\skl{0,\sigma^2}$-distributed
entries. Then $\Base_n\noise_n$ is a  centered normal vector in $\R^{\Om_n}$ and the  covariance matrix of
$\Base_n\noise_n$ has
entries
$\cov \mkl{ \sl{\Base_n\noise_n}\skl{\om},
\Base_n\noise_n \skl{\om'}} = \sigma^2
\minner{\base_{\om}^{n}}{\base_{\om'}^{n}}$.
\end{lemma}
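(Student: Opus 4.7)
The plan is to verify the three claims in turn: Gaussianity of $\Base_n\noise_n$, vanishing mean, and the stated covariance formula. All three should follow by direct computation once we recall the definition $(\Base_n\noise_n)(\om) = \sinner{\base_\om^n}{\noise_n} = \sum_{k \in \I_n} \base_\om^n(k)\,\noise_n(k)$.

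First I would observe that $\Base_n\noise_n$ is a Gaussian vector because it is the image of the Gaussian vector $\noise_n$ under the linear map $\Base_n$: each coordinate $(\Base_n\noise_n)(\om)$ is a fixed linear combination of the independent Gaussians $\noise_n(k)$, and any linear combination of the $(\Base_n\noise_n)(\om)$'s is in turn a linear combination of the $\noise_n(k)$'s, hence normal. Centering follows immediately from $\ew[\noise_n(k)] = 0$ together with linearity of expectation.

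The only substantive step is the covariance identity. I would write
\begin{equation*}
\cov\mkl{(\Base_n\noise_n)(\om),(\Base_n\noise_n)(\om')}
= \ew\ekl{\sum_{k\in\I_n}\base_\om^n(k)\noise_n(k)\,\sum_{k'\in\I_n}\base_{\om'}^n(k')\noise_n(k')},
\end{equation*}
then use independence together with $\ew[\noise_n(k)\noise_n(k')] = \sigma^2 \delta_{k,k'}$ to collapse the double sum to $\sigma^2 \sum_{k\in\I_n}\base_\om^n(k)\base_{\om'}^n(k) = \sigma^2 \minner{\base_\om^n}{\base_{\om'}^n}$.

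There is no real obstacle here; the lemma is essentially the standard fact that a linear transformation $A$ of a Gaussian vector with covariance $\sigma^2 I$ has covariance $\sigma^2 A A^\ast$, applied to $A = \Base_n$ so that the $(\om,\om')$-entry of $\Base_n \Base_n^\ast$ is precisely $\minner{\base_\om^n}{\base_{\om'}^n}$. The only thing worth a sentence of care is noting that nothing about the frame property \req{frame-prop} is actually used — the statement holds for any family $\dict_n$ of analyzing vectors, which is consistent with Remark~\ref{rem:frame}.
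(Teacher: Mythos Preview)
Your proposal is correct and follows essentially the same approach as the paper's proof: expand $(\Base_n\noise_n)(\om)$ as a finite linear combination of the independent Gaussians $\noise_n(k)$, and then compute the covariance via linearity of expectation and $\ew[\noise_n(k)\noise_n(k')]=\sigma^2\delta_{k,k'}$. If anything, you are slightly more careful than the paper in explicitly arguing \emph{joint} Gaussianity (via arbitrary linear combinations) rather than just marginal normality of each coordinate.
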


\begin{proof}
As the sum  of normal random variables with zero
mean,  the random variables  $\skl{\Base_n\noise_n}\skl{\om} = \sum_{k\in \I_n} \base_{\om}^{n}\kl{k} \noise_n \kl{k}$ are  again normally distributed with zero mean.  In particular, we have   $\cov \mkl{ \Base_n\noise_n \skl{\om},  \Base_n\noise_n \skl{\om'}} = \ew \mkl{  \Base_n\noise_n \skl{\om}  \Base_n\noise_n \skl{\om'} }$. Hence the claim follows from the linearity of the expectation value and the  independence of  $\noise_n\kl{k}$.
\end{proof}

Recall the soft-thresholding function
$\soft \kl{y, T_n} = \operatorname{sign}\kl{y} \kl{\abs{y}-T_n}_+$  defined by Equation \req{soft}.
The thresholding estimators we consider apply
$\soft \mkl{\edot, T_n}$ to each  coefficient of $\Data_n$ in \req{problem2} to define an estimator for the
parameter $\Para_n$.
In order to get an estimate for the signal $\signal_n$ one must map the
coefficient estimate  back to  the original signal
domain. This is usually implemented by applying   the dual synthesis operator
(compare with Equation~\req{inv}).

\begin{definition}[Frame thresholding]
\label{def:soft-frame}
Consider the data models~\req{problem}
and~\req{problem2}  and let $T_n >0$ be a given thresholding parameter.
\begin{enumerate}[label=(\alph*),topsep=0.0em]
\item
The   soft-thresholding estimator for
$\Para_n \in \R^{\Om_n}$ using the threshold
$T_n$ is defined by
\begin{equation}\label{eq:est-soft-para}
\hat\Para_n
=
\softop \kl{\Data_n, T_n}
:=
\kl{\soft \skl{\Data_n \skl{\om}, T_n}: \om \in \Om_n}
\in \R^{\Om_n} \,.
\end{equation}

\item
The   soft-thresholding  estimator for  $\signal_n$
with respect to the frame $\dict_n$ using the threshold $T_n$
is defined by
\begin{equation} \label{eq:est-soft}
\hat \signal_n
=
\Base_n^+ \circ \softop \kl{\Base_n \data_n, T_n}
=
\sum_{\om \in \Om_n}  \soft \kl{\sinner{\base_{\om}^{n}}{\data_n}, T_n} \dbase_{\om}^n \,.
\end{equation}
\end{enumerate}
Hence  the frame soft-thresholding estimator $\hat\signal_n$ is simply the  composition of  analysis with $\Base_n$, component-wise thresholding, and    dual synthesis with $\Base_n^+$.\end{definition}

If  $\dict_n$ is an overcomplete
frame, then $\Base_n$ has infinitely many
left-inverses,  and the pseudoinverse used
in Definition~\ref{def:soft-frame} is a particular one.
In principle one could use other left inverses
for defining the  soft  thresholding   estimator~\req{est-soft}.
Since, in general, $\softop \kl{\Data_n, T_n} \not \in  \range\mkl{\Base_n }$ is  outside the range of
$\Base_n$, the use  of a  different left inverse will result in a different estimator.
The  special choice  $\Base_n^+$ has the advantage that for many frames used in practical applications, the dual synthesis operator is known explicitly  and, more importantly, that fast algorithms  are available for its computation (typically algorithms using  only $\mathcal O\mkl{\sabs{\I_n}\log \sabs{\I_n}}$ or even
$\mathcal O\mkl{\sabs{\I_n}}$ floating point operations \cite{Mal09}).

\begin{remark}[Thresholding variations] \label{rem:other-thresholding}
Instead of the \emph{soft} thresholding function
$\soft \mkl{\edot, T_n}$ several other nonlinear thresholding methods have been proposed and used. Prominent examples are the hard thresholding function $z \mapsto z \chi_{\set{\abs z \geq T_n}}$ and the nonnegative garrote  $z \mapsto  z \max\sset{1-T_n^2/z^2,0}$  of~\cite{Bre95,Gao98}.
Strictly taken, the smoothness estimates derived in
Section~\ref{sec:smoothness} only hold for
thresholding functions $F\mkl{\edot,T_n}$ satisfying the shrinkage property
$\sabs{F\mkl{y
\pm T_n,T_n}} \leq  \sabs{y}$ for all $y \in \R$. This property is, for example,  not satisfied by  the nonnegative garrote. In this case, however, similar estimates may be derived under additional assumptions on the signals of interest.
Other prominent denoising techniques are based on
block-thresholding (see, for example,~\cite{Cai99,CaiZho09,CheFadSta10a,HalKerPic99}).
In this case, the derivation of sharp smoothness estimates requires extreme value results for dependent
$\chi^2$-distributed random variables
(with more than one degree of freedom).
Such an extreme value analysis  seems possible but is
beyond the scope of this paper.
Our given results can be seen as the first step towards a
more general theory.
\end{remark}

\begin{remark}[Multiple selections]\label{rem:dict}
Be aware,  that we allow certain elements
$\base_{\om}^{n}$ to be contained  more
than once in the frame $\dict_n$.
Hence we may have $\sabs{\sset{\base_{\om}^{n}: \om \in\Om_n}} < \abs{\Om_n}$.
Such multiple selections often arises naturally for  frames
that are the union of several bases having some elements in
common.
A standard example is the wavelet cycle spinning
procedure  of~\cite{CoiDon95}, where the underlying
frame is the union of several shifted orthonormal wavelet bases
(see Section~\ref{sec:cs}).
Multiple selections of frame elements   also affect the pseudoinverse and finally the soft-thresholding estimator.
Hence, if
$\mkl{\base_{\om}^{n} : \om\in \Om_n}$ and
$\mkl{\psi_\la^n : \la \in \Lambda_n}$ denote   two
frames composed  by  the same frame elements,
$\mset{\base_{\om}^{n}:\om\in \Om_n}
=
\mset{\psi_\la^n: \la\in \Lambda_n}$,
but having different cardinalities $\abs{\Om_n} \neq \sabs{\Lambda_n}$, then the  soft-thresholding estimators corresponding to these frames differ from each other.\end{remark}

\subsection{Rationale Behind Thresholding Estimation}

We conclude this section by commenting on the
underlying rationale behind  thresholding estimation and
situations where it is  expected to produce good results.

The basic assumption underlying thresholding estimation is that the frame
operator separates the  data into large coefficients
due to the  signal and  small coefficients   mainly due to the noise.
For additive noise models   $\data_n = \signal_n + \noise_n$
both issues can be studied separately. In this case,   one requires that for some  threshold $T_n$
(which  finally judges between signal  and  noise)
the following two conditions are satisfied:

\begin{enumerate}[label=(\arabic*), topsep=0.0em]
\item \label{it:principle1}
\textbf{Coherence between signal and  frame:}
The signal $\signal_n$ is well represented by few large
coefficients having $\sabs{\sinner{\base_{\om}^{n}}{\signal_n}} >T_n$.

\item \label{it:principle2}
\textbf{Incoherence between noise  and frame:}
With high probability, all noise coefficients with respect to the frame
$\dict_n$ satisfy
$\sabs{\sinner{\base_{\om}^{n}}{ \noise_n}}
\leq  T_n$.
\end{enumerate}

In the following sections we shall see, that Item~\ref{it:principle2} can be analyzed in a unified way for asymptotically stable  frames.
Item~\ref{it:principle1}, however, is more an approximation issue
rather than an estimation issue. Given a frame, it, of course, cannot  be satisfied for every
$\signal_n \in \R^{n}$. The choice of a `good frame' depends on the specific application at hand and in particular on the type of signals that  are expected.
The better the signals of interest are represented by a few but large frame coefficients, the better the denoising result will be. The richer the analyzing family is,
the more signals can be expected  to be recovered properly.
The price to pay must be, of course, a higher computational cost.

The following two simple examples demonstrate how
the use of redundant frames may significantly improve the performance of the thresholding estimation.

\begin{figure}[htb!]\centering
\includegraphics[width=\textwidth]{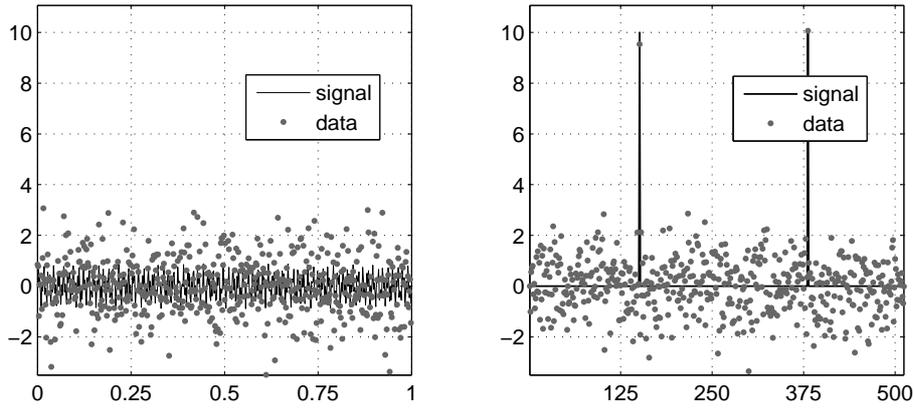}
\caption{Left: Signal $\signal_n$ (superposition of two  sine waves)
and data $\data_n = \signal_n + \noise_n$ from Example~\ref{ex:sine1}. Right: Coefficients  of the signal and the data with respect to the
sine basis.\label{fig:sine1}}
\end{figure}

\begin{example}[Thresholding in the sine basis] \label{ex:sine1}
We consider the discrete signal $\signal_n \in \R^{n}$ defined by  $\signal_n\kl{k} = 5\sqrt{2}/16$ $\sin\mkl{\pi \om_1 k /n} + 5\sqrt{2}/16\,\sin\mkl{\pi\om_2 k /n}$, which is a  superposition of two sine waves having  frequencies $\om_1 =150$ and $\om_2 =380$, respectively,  and amplitudes $5\sqrt{2}/16 \simeq 0.45$.
The left image  in Figure~\ref{fig:sine1} shows the  signal $\signal_n$ and the noisy data $\data_n = \signal_n + \noise_n$ obtained  by  adding Gaussian  white noise of variance equal to one to the signal.
Apparently,  there seems little  hope to recover  $\signal_n$ from the data $\data_n$ in the original signal domain.
Almost like a miracle, the  situation changes
drastically  after computing the  coefficients with respect to the sine basis $\mkl{n^{-1/2} \sin \skl{\pi\om k /n}: \om=1, \dots, n}$.
Now, the signal and the noise are clearly separated as can be seen from the right image  in Figure~\ref{fig:sine1}.
Obviously we will get an almost perfect reconstruction
by simply removing all coefficients below a proper
threshold.
\end{example}

\begin{figure}[htb!]\centering
\includegraphics[width=\textwidth]{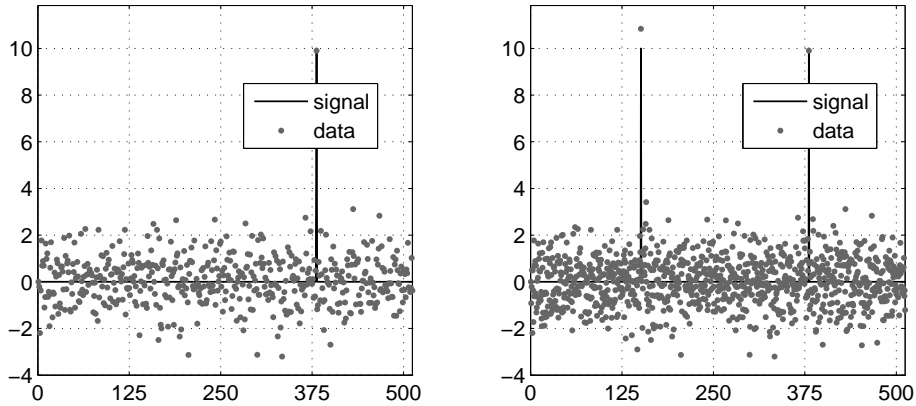}
\caption{Left: Coefficients  of the signal $\signal_n'$ and the data
$\data_n' = \signal_n' + \noise_n$ from Example~\ref{ex:sine2} with respect the sine basis. Right: Coefficients of the same signal and data  with respect to the two times oversampled sine frame. \label{fig:sine2}}
\end{figure}

\begin{figure}[htb!]\centering
\includegraphics[width=\textwidth]{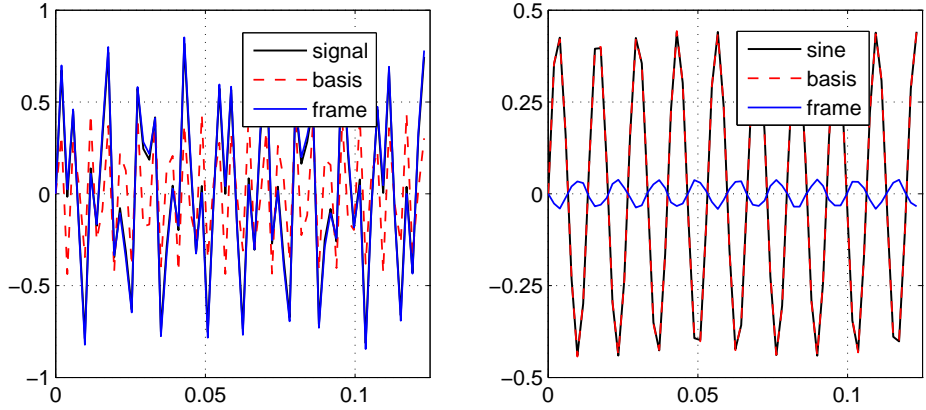}
\caption{Left: Signal $\signal_n'$,
and the reconstructions from the data $\data_n' = \signal_n' + \noise_n$  by soft-thresholding  in the sine basis and in the  overcomplete sine frame, respectively.
Only the first $64$ components  are plotted. Right: Sine wave
$5\sqrt{2}/16 \, \sin\mkl{\pi \om_1' k /n}$  and residuals of the
two reconstructions. As can be seen, thresholding in the sine frame almost perfectly recovers the signal $\signal_n$, whereas the result of thresholding in the sine basis is useless
(the residual is almost equal to the displayed  sine wave of frequency $\om_1'$.)
\label{fig:sine2-rec}}
\end{figure}

\begin{example}[Thresholding in a redundant sine frame] \label{ex:sine2}
The signal  in Example~\ref{ex:sine1} is  a  combination
of sine waves with integer frequencies covered by the sine frame.   However, in practical  application the signal may
also have non-integer frequencies.
In order to investigate this issue, we now consider the
signal $\signal_n' \kl{k} = 5\sqrt{2}/16 \, \sin\mkl{\pi \om_1' k /n} + 5\sqrt{2}/16\,\sin\mkl{\pi\om_2 k /n}$ having frequencies $\om_1' =150.5$ and
$\om_2 =380$
(hence $\om_1'$ is a slight perturbation of  the frequency $ \om_1$ considered in Example~\ref{ex:sine1}).
The new signal $\signal_n'$ is not a  sparse linear combination of elements of the sine basis. As a matter of fact, the energy of the first sine wave  is spread over many  coefficients  and thus      submerges in the noise.
Indeed, as can be seen from the left image in  Figure~\ref{fig:sine2}, the low frequency coefficient  disappears.
However, by taking the two times redundant frame
$\mkl{n^{-1/2}\sin \kl{\pi\om k /n} : \om=\set{1/2,1, \dots, n}}$ instead  of the sine basis, the coefficient due to frequency $\om_1'$ appears
again in the transformed domain.
Moreover, as can be seen from Figure~\ref{fig:sine2-rec} the reconstruction by thresholding the coefficients with respect to the overcomplete sine frame is almost perfect, whereas the reconstruction by thresholding the basis coefficients is useless.
\end{example}

In Examples~\ref{ex:sine1} and~\ref{ex:sine2}
the  threshold choice is not a very delicate issue
since the signal and the noise are separated very
clearly in the transformed domain. Indeed as  can be seen from the right plots in Figures~\ref{fig:sine1} and ~\ref{fig:sine2}
there is a quite wide  range of thresholds that would yield an almost  noise free estimate close to the original signal.
However,  if the signal also contains important
coefficients of moderate size, then the choice of a good
threshold is crucial and difficult.
This is typically the case for image denoising using wavelets  or curvelet frames:
Natural  images are approximately sparse in these frames but almost never
strictly sparse. The   particular  threshold choice now will always be a tradeoff
between noise removal and signal representation and becomes a delicate
issue.
In order to develop rationale threshold choices, a precise  understanding
of the distribution of $\sabs{\sinner{\base_{\om}^{n}}{\noise_n}}$ is helpful.
This is the subject of our following considerations.

\section{Extreme Value Analysis of Frame Thresholding}
\label{sec:evd-frame}

Now we turn back to the denoising  problem~\req{problem}.
After  application of the analysis operator $\Base_n$ corresponding to  the normalized frame $\dict = \mkl{\base_{\om}^{n}: \om \in \Om_n}$ our aim is to estimate
the vector $\Para_n \in \R^{\Om_n} $ from given noisy coefficients (compare with  Equation~\req{problem2})
\begin{equation*}
	\Data_n \skl{\om}
	=
	\Para_n \skl{\om} +     \kl{\Base_n\noise_n}  \skl{\om} \,,
	\qquad \text{ for } \om  \in  \Om_n   \,.
\end{equation*}
Here $\Base_n\noise_n$ is the transformed noise vector which is normally distributed, has zero mean and covariance matrix
$\kappa_n \skl{\om,\om'} =
\sigma^2 \sinner{\base_{\om}^{n}}{\base_{\om'}^{n}}$; see
Lemma~\ref{lem:covariance}.
In this section we shall analyze in detail the
component-wise soft-thresholding estimator
$\hat\Para_n = \softop  \mkl{\Data_n, T_n}$
defined by~\req{est-soft-para}.  We will start by   computing the  extreme value distribution of $\Base_n\noise_n$ claimed in Theorem~\ref{thm:main}. Based on the  limiting law will then introduce extreme value thresholding techniques
that will be shown to provide asymptotically  sharp
confidence regions.

\subsection{Proof of Theorem~\ref{thm:main}}
\label{sec:thm:main}

The main aim of this subsection is to verify Theorem~\ref{thm:main}, which states that the distribution of the  maxima of the noise coefficients
$\Base_n\noise_n  \skl{\om}$ are of Gumbel type with explicitly given normalization constants.
The proof of  Theorem~\ref{thm:main} will be a
consequence of  Lemmas~\ref{lem:main}  and~\ref{lem:bessel-decay}
to be derived in the following. The main Lemma~\ref{lem:main}  relies itself  on
a new extreme value result established in Section~\ref{ap:evd}.

\begin{lemma}\label{lem:main}
Let $\mkl{\Noise_n}_{n \in \N}$ be a sequence of normal  random vectors in $\R^{\Om_n}$ with covariance matrices $\kappa_{n}$ having ones in the diagonal. Assume additionally, that the following holds:
\begin{enumerate}[label=(\roman*),topsep=0em]
\item \label{it:assi1}
For  every $\delta \in \kl{0,1}$,
$\abs{\mset{\skl{\om,\om'} \in \Om_n^2 :
\sabs{\kappa_{n}  \skl{\om,\om'} } \geq  \delta }}
= \mathcal O\kl{\abs{\Om_n}}$ as $n\to \infty$.

\item \label{it:assi2}
For some $\rho \in\kl{0,1}$,
$\abs{\mset{\skl{\om,\om'} \in \Om_n^2:
\sabs{\kappa_{n}  \skl{\om,\om'} } \geq  \rho }}
= o\kl{\abs{\Om_n} / \sqrt{\log \abs{\Om_n}} }$ as $n\to \infty$.

\item \label{it:assi3}
$B := \sup \set{
 \sum_{\om' \in \Om_n}  \sabs{\kappa_{n} \skl{\om, \om'}}^2  : n \in \N \text{ and }\om \in \Om_n} < \infty$.
\end{enumerate}
Then, $\norm{\Noise_n}_\infty$ is of Gumbel type (see Definition~\ref{def:gumbeltype}) with  normalization constants  $a\mkl{\chi,\abs{\Om_n}} $ and
$b\mkl{\chi,\abs{\Om_n}}$ defined by  \req{a-abs} and \req{b-abs}.
\end{lemma}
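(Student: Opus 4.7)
The plan is to work with the squared random variables $Y_n(\omega) := \Noise_n(\omega)^2$, which are marginally $\chi^2_1$-distributed, and to note that $\snorm{\Noise_n}_\infty \leq x$ iff $\max_{\omega\in\Om_n} Y_n(\omega) \leq x^2$. If I choose the threshold $u_n = u_n(z) := 2\log\abs{\Om_n} + 2z - \log\log\abs{\Om_n} - \log\pi$, then $u_n^{1/2} = b(\chi,\abs{\Om_n}) + a(\chi,\abs{\Om_n})\, z + o(a(\chi,\abs{\Om_n}))$ after a one-term Taylor expansion of the square root, so it suffices to prove
\begin{equation*}
\wk\set{\max_{\omega\in\Om_n} Y_n(\omega) \leq u_n} \longrightarrow \exp(-e^{-z})
\qquad (n\to\infty).
\end{equation*}
First I would record the reference calculation for the i.i.d.\ case: using $\wk\{Y_1 > u\} \sim \sqrt{2/(\pi u)}\, e^{-u/2}$ as $u\to\infty$, one checks $\abs{\Om_n}\wk\{Y_1 > u_n\} \to e^{-z}$, whence $F(u_n)^{\abs{\Om_n}} \to \exp(-e^{-z})$, where $F$ is the $\chi^2_1$ cdf. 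Thus the whole task is to compare the true law of the dependent maximum with the independent product.

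The core of the argument is a Berman-type comparison inequality for the dependent $\chi^2_1$-variables $Y_n(\omega)$---this is exactly what the paper postpones to Section~\ref{ap:evd}. I will invoke it to get a bound of the schematic form
\begin{equation*}
\Bigl\lvert \wk\bigl\{\max_{\omega} Y_n(\omega) \leq u_n\bigr\} - F(u_n)^{\abs{\Om_n}} \Bigr\rvert
\;\leq\; C \sum_{\omega\neq\omega'} g\bigl(\kappa_n(\omega,\omega'),\, u_n\bigr),
\end{equation*}
where $g(r,u)$ decreases in $u$ and is small when $\abs{r}$ is bounded away from $1$; concretely one expects a weight of order $\abs{r}\exp(-u/(1+\abs{r}))$, paralleling the classical Gaussian Berman estimate but with the squared-correlation structure inherited from pairs of normals. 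Our task then reduces entirely to showing this sum tends to $0$.

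Next I would split $\Om_n \times \Om_n \setminus \{\om = \om'\}$ into three pieces according to the size of $r := \kappa_n(\omega,\omega')$, using the scales $\delta < \rho$ appearing in the hypotheses:
\begin{enumerate}[label=(\alph*),topsep=0em]
\item \emph{Small covariances}, $\abs{r}<\delta$. Here $g(r,u_n) \leq \abs r \cdot \abs{\Om_n}^{-2/(1+\delta)}$, so summing and using hypothesis~\ref{it:assi3} (which bounds $\sum_{\omega'}\abs{\kappa_n(\omega,\omega')}^2$ uniformly, hence via Cauchy--Schwarz also the $\ell^1$ sum on each row by $\sqrt{B\abs\Om_n}$) the total contribution is at most $\abs{\Om_n}^{3/2}\abs{\Om_n}^{-2/(1+\delta)}$, which vanishes for $\delta$ small enough.
\item \emph{Medium covariances}, $\delta \leq \abs{r} < \rho$. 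The number of such pairs is $O(\abs{\Om_n})$ by~\ref{it:assi1}, and each is weighted by at most $\exp(-u_n/(1+\rho)) = O\bigl(\abs{\Om_n}^{-2/(1+\rho)}\bigr)$, so the contribution is $O\bigl(\abs{\Om_n}^{1-2/(1+\rho)}\bigr) = o(1)$ since $\rho<1$.
\item \emph{Large covariances}, $\abs{r} \geq \rho$. Here $g$ does not decay fast enough in $r$ alone (the worst case $\abs r \to 1$ gives a weight of order $\exp(-u_n/2)\sim \abs{\Om_n}^{-1}\sqrt{\log\abs\Om_n}$), and the purpose of the tighter sparsity~\ref{it:assi2} is precisely to compensate this: multiplying $\abs{\Om_n}^{-1}\sqrt{\log\abs\Om_n}$ by $o\bigl(\abs{\Om_n}/\sqrt{\log\abs{\Om_n}}\bigr)$ pairs yields $o(1)$.
\end{enumerate}
Finally I let $n\to\infty$ and then $\delta \downarrow 0$, concluding $\wk\{\max_\omega Y_n(\omega)\leq u_n\} \to \exp(-e^{-z})$ for every $z\in\R$, which is the claimed Gumbel limit for $\snorm{\Noise_n}_\infty$ with normalizing sequences $a(\chi,\abs{\Om_n})$, $b(\chi,\abs{\Om_n})$.

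The main obstacle is the third regime: the Berman-type bound for $\chi^2_1$ variables is weakest exactly for near-perfectly correlated pairs, and calibrating the exponent to establish that hypothesis~\ref{it:assi2} with the specific rate $o(\abs{\Om_n}/\sqrt{\log\abs{\Om_n}})$ is both necessary and sufficient to kill this term is the delicate point. This is also where the novelty of the Section~\ref{ap:evd} inequality is felt, since classical Berman estimates apply to maxima of Gaussians rather than of absolute values (equivalently $\chi^2_1$'s) and would not produce the correct weight.
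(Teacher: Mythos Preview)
Your proposal is correct and follows essentially the same route as the paper: reduce to Theorem~\ref{thm:abs} via the comparison inequality of Section~\ref{ap:evd}, then split the off-diagonal covariance sum into the three regimes $\sabs{r}<\delta$, $\delta\le\sabs{r}<\rho$, $\sabs{r}\ge\rho$ and kill each piece using hypotheses~\ref{it:assi3}, \ref{it:assi1}, \ref{it:assi2} respectively. The only cosmetic differences are that the paper applies Cauchy--Schwarz to the whole small-covariance sum $R_n(3)$ rather than row-by-row (both give the constraint $\delta<1/3$), and it simply fixes one $\delta\in(0,1/3)$ at the outset---your final ``$\delta\downarrow 0$'' step is unnecessary.
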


\begin{proof}
Let  $\mkl{\Noise_n}_{n \in \N}$ be a sequence of normal  random vectors  satisfying Conditions~\ref{it:assi1}--\ref{it:assi3}.
According to  Theorem~\ref{thm:abs} it is sufficient to show that
\begin{equation*}
R_{n}
 :=
\sum_{\om\neq \om'}
\mabs{\kappa_{n} \skl{\om,\om'}}
 \,
\kl{ \frac{\log \abs{\Om_n}}{\abs{\Om_n}^2}}^{1/\kl{1+\sabs{\kappa_{n}\skl{\om,\om'}}}}
\to 0
\, \qquad \text{ as  }   n \to \infty \,.
\end{equation*}
This will be done by splitting the sum $R_{n}$ into three parts and showing  that  each of them  tends to zero as $n \to \infty$.
For that purpose, let $\delta \in \kl{0, 1/3}$ be any small
number, let $\rho \in \kl{0, 1}$  be as  in Condition~\ref{it:assi2}
and define
\begin{align*}
\Lambda_n\kl{1}
&:=
\mset{ \skl{\om,\om'}  \in \Om_n^2: \om \neq \om' \text{  and }
\sabs{\kappa_{n} \skl{\om,\om'}} \geq  \rho } \,,
\\
\Lambda_n\kl{2}
&:=
\mset{ \skl{\om,\om'}  \in \Om_n^2:
\delta \leq \sabs{\kappa_{n} \skl{\om,\om'}}  < \rho }
\,,
\\
\Lambda_n\kl{3}
&:=
\mset{ \skl{\om,\om'} \in \Om_n^2:
\sabs{\kappa_{n} \skl{\om,\om'}}  < \delta }
\,.
\end{align*}
We further write $R_{n} = R_{n}\kl{1} +  R_{n}\kl{2} + R_{n}\kl{3} $
with
\begin{equation*}
R_{n}\kl{i}
=
\sum_{\kl{\om, \om'} \in \Lambda_n\kl{i}}
\abs{\kappa_{n} \skl{\om,\om'}}
 \,
\kl{ \frac{\log \abs{\Om_n}}{\abs{\Om_n}^2}}^{1/\kl{1+\sabs{\kappa_{n}\skl{\om,\om'}}}}
\quad \text{ for } i \in \sset{1,2,3} \,.
\end{equation*}
It remains to verify that any of the terms
$R_{n}\kl{i}$  converges  to zero as  $n \to \infty$.

 \begin{itemize}[topsep=0.0em]
 \item
Since any $\Noise_n$ is a normal random vector  with zero mean and unit variance, we have
 $\sabs{\kappa_{n}\skl{\om,\om'}} \leq 1$ for any index pair
 $\skl{\om,\om'} \in \Om_n^2$, which yields the inequality
 $R_{n}\kl{1} \leq  \xsabs{\Lambda_n\kl{1}} \,
 \sqrt{\log\abs{\Om_n}} / \xsabs{\Om_n}$.
 By Condition~\ref{it:assi2}  we  have
 $\sabs{\Lambda_n\kl{1}} = o \kl{\abs{\Om_n}/\sqrt{\log\abs{\Om_n}} }$ which  shows that
 $R_{n}\kl{1} \to 0$ as  $n \to \infty$.

 \item
To estimate the second sum $R_{n}\kl{2}$,    recall that by definition of the set $\Lambda_n\kl{2}$, we have  $\sabs{\kappa_{n}\skl{\om,\om'}} \leq \rho$ for any pair of indices  $\skl{\om,\om'} \in \Lambda_n\kl{2}$.   Moreover, recall that by Condition~\ref{it:assi1} we further have    $\sabs{\Lambda_n\kl{2}} = \mathcal O
\kl{\abs{\Om_n}}$.
Hence we obtain
 \begin{equation*}
 R_{n}\kl{2}
\leq
\mabs{\Lambda_n\kl{2}} \,
\kl{\frac{\log\abs{\Om_n}}{\abs{\Om_n}^2}}^{1/\kl{1+\rho}}
=
\kl{\log\abs{\Om_n}}^{1/\kl{1+\rho}} \,
\mathcal O \kl{ \abs{\Om_n}^{1- 2/\kl{1+\rho}}} \,.
 \end{equation*}
Since by assumption $\rho < 1$, the inequality
$1 - 2/\kl{1+\rho} <  0 $ holds which implies that we have
$R_{n}\kl{2} \to 0$ as  $n \to \infty$.

\item
It remains to estimate the final sum  $R_{n}\kl{3}$.
The Cauchy-Schwarz inequality, Condition~\ref{it:assi3}, and the estimate $\sabs{\kappa_{n}\skl{\om,\om'}} \leq \delta$
yield
\begin{multline*}
R_{n}\kl{3}^2
 \leq
 \sum_{\kl{\om, \om'} \in \Lambda_n\kl{3}}
\abs{\kappa_n\skl{\om,\om'}}^2
\sum_{\kl{\om, \om'} \in \Lambda_n\kl{3}}
 \kl{\frac{\log\abs{\Om_n}}{\abs{\Om_n}^2}}^{2/\kl{1+\delta}}
\\
 \leq
B \, \abs{\Om_n}
\;
\kl{\frac{\log\abs{\Om_n}}{\abs{\Om_n}^2}}^{2/\kl{1+\delta}}
 \abs{\Om_n}^2
=
\kl{\log\abs{\Om_n}}^{2/\kl{1+\delta}}
\mathcal O\kl{ \abs{\Om_n}^{3  - 4 /\kl{1+\delta}}}
\,.
\end{multline*}
Now, by assumption  the inequality  $\delta< 1/3$ holds and hence  we have $4/\kl{1+\delta} > 3$.
This implies that  also $R_{n}\kl{3}$ tends to zero
as $n \to \infty$.
\end{itemize}

In summary, we have verified that $R_{n}\kl{i} \to 0$ as  $n \to \infty$ for every $i \in \set{1,2,3}$. Hence  their sum $R_{n}$ converges to zero, too.
The claimed distributional convergence results now
follows from Theorem~\ref{thm:abs}  and concludes the proof.
\end{proof}

We next state a simple auxiliary Lemma that  bounds the
number of inner products  $\minner{\phi_{\om}^n}{\base_{\om'}^n}$  being bounded away  from zero.

\begin{lemma}\label{lem:bessel-decay}
For any $n$ let $\mkl{\base_{\om}^{n}: \om\in\Om_n}$ be a
family of normalized vectors in $\R^{\I_n}$, such that the upper frame bounds $b_n$ are uniformly bounded.
Then, for every $\delta > 0 $, we have
 \begin{equation} \label{eq:bessel-decay}
\abs{\sset{\skl{\om,\om'}\in \Om_n^2:
\sabs{\sinner{\phi_{\om}^n}{\base_{\om'}^n}} \geq \delta } }
= \mathcal O\kl{ \abs{\Om_n} }  \;.
\end{equation}
\end{lemma}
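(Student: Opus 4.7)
The plan is to use the upper frame bound as a Bessel-type inequality to control, for each fixed index $\om$, the number of indices $\om'$ whose frame element has correlation at least $\delta$ with $\base_{\om}^n$. Summing over $\om$ then yields the desired $\mathcal O(\abs{\Om_n})$ bound.

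More concretely, I would first fix $\om \in \Om_n$ and apply the right-hand inequality in the frame property~\req{frame-prop} to the vector $\base_{\om}^n$, which is normalized. This gives
\begin{equation*}
\sum_{\om' \in \Om_n} \sabs{\sinner{\base_{\om}^n}{\base_{\om'}^n}}^2 \;\leq\; b_n \snorm{\base_{\om}^n}^2 \;=\; b_n \;\leq\; B,
\end{equation*}
where $B = \sup_n b_n < \infty$ by assumption. Restricting the sum to those $\om'$ for which $\sabs{\sinner{\base_{\om}^n}{\base_{\om'}^n}} \geq \delta$, each summand is at least $\delta^2$, so the number of such $\om'$ is at most $B/\delta^2$.

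Finally, I would sum this per-$\om$ estimate over all $\om \in \Om_n$, which gives
\begin{equation*}
\abs{\sset{\skl{\om,\om'}\in \Om_n^2: \sabs{\sinner{\base_{\om}^n}{\base_{\om'}^n}} \geq \delta}} \;\leq\; \abs{\Om_n} \cdot \frac{B}{\delta^2},
\end{equation*}
which is of order $\mathcal O(\abs{\Om_n})$ as claimed, with the implicit constant depending on $\delta$ and on the uniform upper frame bound $B$ but not on $n$. There is no real obstacle here; the only thing to note is that we do not need the lower frame bound at all, only the uniform boundedness of $b_n$, so the lemma applies equally well to any uniformly bounded Bessel system.
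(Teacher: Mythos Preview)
Your proof is correct and follows essentially the same approach as the paper: both apply the upper frame bound to the normalized vector $\base_{\om}^n$ to cap, uniformly in $n$ and $\om$, the number of indices $\om'$ with $\sabs{\sinner{\base_{\om}^n}{\base_{\om'}^n}} \geq \delta$, and then sum over $\om$. The only cosmetic difference is that the paper phrases the per-$\om$ bound as a contradiction argument, whereas you give the direct estimate with the explicit constant $B/\delta^2$; your version is in fact slightly cleaner.
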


\begin{proof}
To verify~\req{bessel-decay} it is sufficient to find,
for every given $\delta >0$, some constant $K \in \N$
such that
 \begin{equation} \label{eq:bessel-decay1}
\skl{\forall n  \in \N}
\skl{\forall \om  \in \Omega_n}
\qquad
\abs{\sset{\om'\in \Omega_n:  \sabs{\sinner{\phi_{\om}^n}{\base_{\om'}^n}} \geq \delta } } \leq K \;.
\end{equation}
Indeed,  if~\req{bessel-decay1} holds then summing over all
$\om \in \Om_n$ yields~\req{bessel-decay}.

To show \req{bessel-decay1} we assume to the contrary that
there is some  $\delta > 0$ such that for all $m\in \N$ there exists
some $n\kl{m} \in \N$ and some $\om \in \Omega_{n\kl{m}}$
such that the set $\Lambda_m = \sset{\om'\in \Omega_{n\skl{m}}:  \sabs{\sinner{\base^{n\skl{m}}_{\om}}{\base_{\om'}^{n\kl{m}}}} \geq \delta }$ contains  more then  $m$ elements.
By Assumption we have the equality $\snorm{\base_{\om}^{n\kl{m}}} =1 $ for all $\om\in \Om_n$. Together with
Assumption~\ref{it:frame3} this implies
\begin{equation*}
B = B \, \mnorm{\base_{\om}^{n\kl{m}}}^2
\geq
\sum_{\om'\in \Omega_{n\skl{m}}} \mabs{\minner{\base_{\om}^{n\skl{m}}}{\base_{\om'}^{n\kl{m}}}}^2
\geq
\sum_{\om'\in \Lambda_M}
\mabs{\minner{\base^{n\skl{m}}_{\om}}{\base_{\om'}^{n\kl{m}}}}^2
\geq
m \delta  \,.
\end{equation*}
Since the last estimate should hold for all $m\in \N$ and we have $B<\infty$
by assumption, this  obviously gives is a contradiction.
\end{proof}

\begin{proof1}
Theorem~\ref{thm:main} is now an immediate
consequence of the above results:  Lemma~\ref{lem:covariance}  and
Lemma~\ref{lem:bessel-decay} show that the sequence of  normalized frame coefficients
$\skl{\Base_n\noise_n/\sigma}_{n\in \N}$ satisfies
Conditions~\ref{it:assi1}--\ref{it:assi3} of Lemma~\ref{lem:main}.
Hence Lemma~\ref{lem:main} applied to  the random vectors
$\Noise_n  = \Base_n\noise_n / \sigma $
shows the assertion.
\end{proof1}

We conclude this subsection by stating an  extreme value result for the  maximum without the absolute values.
Although we do not need this result further, the distributional limit is interesting in its own.

\begin{theorem}[Limiting Gumbel law without absolute values]\label{thm:mainb}
Assume that the frames $\dict_n$ are asymptotically stable and let $\skl{\noise_n}_{n\in\N}$ be a sequence of random vectors in $\R^{\I_n}$ having independent $N\skl{0,1}$-distributed entries.
Then, the random sequence of  the maxima  $ \max\kl{\Base_n\noise_n} :=
\max  \sset{ \sinner{\base_\om^n}{\noise_n}: \om \in \Om_n}$ is of Gumbel type with normalization constants
\begin{align}\label{eq:am}
a\mkl{N,\abs{\Om_n}}  &:=  \frac{1}{\sqrt{2 \log \abs{\Om_n}}} \,,
\\ \label{eq:bm}
b\mkl{N,\abs{\Om_n}}  &:= \sqrt{2 \log \abs{\Om_n}} -
 \frac{ \log \log \abs{\Om_n} +   \log \kl{4 \pi}} {2 \sqrt{2\log \abs{\Om_n}}} \,.
\end{align}
\end{theorem}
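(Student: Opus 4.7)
The plan is to follow the template of the proof of Theorem~\ref{thm:main} and to replace the $\chi^2$-based comparison of Theorem~\ref{thm:abs} with the classical Berman comparison inequality (the Gaussian Normal comparison lemma) for dependent centered Gaussian maxima. Put $\Noise_n := \Base_n \noise_n$. By Lemma~\ref{lem:covariance} this is a centered normal vector in $\R^{\Om_n}$ with covariance entries $\kappa_n\skl{\om,\om'} = \sinner{\base_\om^n}{\base_{\om'}^n}$, with unit diagonal since $\snorm{\base_\om^n}=1$, and evidently $\max_\om \Noise_n\skl{\om} = \max\skl{\Base_n\noise_n}$. Lemma~\ref{lem:bessel-decay} together with the two defining properties of asymptotically stable frames shows, exactly as in the derivation of Theorem~\ref{thm:main}, that $(\Noise_n)_{n\in\N}$ satisfies conditions (i)--(iii) of Lemma~\ref{lem:main}; condition (iii) follows from the Bessel-type inequality $\sum_{\om'}\sabs{\kappa_n\skl{\om,\om'}}^2 \leq b_n \leq B$ implied by the upper frame bound.

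The key step is then to invoke the classical normal comparison lemma (Berman's inequality for non-stationary centered Gaussian sequences, as in Leadbetter--Lindgren--Rootz\'en, Theorem~4.2.1): for the threshold $u_n(z) := a\mkl{N,\abs{\Om_n}}\,z+b\mkl{N,\abs{\Om_n}}$,
\begin{equation*}
\mabs{\wk\set{\max_{\om \in \Om_n} \Noise_n\skl{\om} \leq u_n(z)} - \Phi(u_n(z))^{\abs{\Om_n}}}
\;\leq\;
C \sum_{\om \neq \om'} \abs{\kappa_n\skl{\om,\om'}} \exp\kl{-\frac{u_n(z)^2}{1+\sabs{\kappa_n\skl{\om,\om'}}}} \,,
\end{equation*}
where $\Phi$ is the standard normal CDF. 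Since $u_n(z)^2 = 2\log\abs{\Om_n}+O\skl{\log\log\abs{\Om_n}}$, each summand on the right is, up to a slowly varying factor, of the form $\abs{\kappa_n\skl{\om,\om'}}\,\abs{\Om_n}^{-2/\kl{1+\sabs{\kappa_n\skl{\om,\om'}}}}$, which is exactly the expression whose convergence to zero is established in the proof of Lemma~\ref{lem:main}. The three-part splitting of $\Om_n^2$ into large, intermediate, and small covariance pairs carries over verbatim, so the right-hand side tends to zero.

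It remains to check the textbook fact that $\Phi(u_n(z))^{\abs{\Om_n}} \to \exp\mkl{-e^{-z}}$ under the one-sided normalization~\req{am}--\req{bm}. This is exactly the point at which the constant $\log(4\pi)$ enters, rather than the $\log\pi$ appearing in Theorem~\ref{thm:main}: the additive discrepancy $\log 4$ reflects the factor of two between the tail probabilities $\wk\set{Z>u}$ and $\wk\set{\sabs{Z}>u}$ for a standard normal $Z$, which, after expansion of $\sqrt{2\log(2\abs{\Om_n})}$, transfers into the second-order correction of $b\mkl{N,\abs{\Om_n}}$. The only obstacle worth monitoring is the mild discrepancy between the slowly varying prefactor produced by the one-sided Gaussian tail bound and the $\chi^2$-tail bound used for Theorem~\ref{thm:abs}; inspection shows that this prefactor is at worst $O\kl{\sqrt{\log\abs{\Om_n}}}$ and is still absorbed by stability hypothesis~(i) of Definition~\ref{def:stab} in the large-covariance regime. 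Hence $\max\skl{\Base_n\noise_n}$ is of Gumbel type with the stated normalizing constants.
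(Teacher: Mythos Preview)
Your proposal is correct and follows essentially the same route as the paper: the paper's proof simply says that one repeats the argument for Theorem~\ref{thm:main} (i.e., Lemma~\ref{lem:covariance} and Lemma~\ref{lem:bessel-decay} verify the hypotheses of Lemma~\ref{lem:main}, whose three-part splitting shows that the comparison remainder~\req{rest} vanishes), but with Theorem~\ref{thm:normal} in place of Theorem~\ref{thm:abs}; you have written out exactly this, inlining the content of Theorem~\ref{thm:normal} (normal comparison plus the independent Gumbel limit of Proposition~\ref{prop:normal}). One small caveat: the comparison bound you wrote, without the singular factor $(1-\rho^2)^{-1/2}$, is the Li--Shao improvement recorded in the paper as Lemma~\ref{lem:normal}, not the classical form of \cite[Theorem~4.2.1]{LeaLinRoo83}; with the classical version the large-covariance block $\Lambda_n(1)$ could be problematic (frames may contain repeated elements, so $\sabs{\kappa_n}$ need not be bounded away from $1$), so you should cite Lemma~\ref{lem:normal} rather than the older inequality.
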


\begin{proof}
The proof  is analogous to the proof of Theorem~\ref{thm:main} and uses the extreme value result of Theorem~\ref{thm:normal} for dependent normal random vectors instead of the one of Theorem~\ref{thm:abs} for  absolute values of dependent normal random vectors.
\end{proof}

\subsection{Universal Threshold:
Qualitative  Denoising Property}
\label{sec:denoising}

In the case that  the family $\dict_n = \mkl{\base_{\om}^{n}: \om \in \Om_n}$ is an orthonormal basis
it is well known that the thresholding sequence
$T_n  =  \sigma \sqrt{2 \log \abs{\Om_n}}$  satisfies the asymptotic denoising property
(see, for example,  \cite{Don95,Joh11,Mal09} and also Section~\ref{sec:wave-soft} in the introduction), that is,
\begin{equation}  \label{eq:noise-remove2}
	\lim_{n \to \infty}
	\wk \set{ \snorm{\Base_n \noise_n}_\infty
	\leq   T_n }
	= 1 \,.
\end{equation}
Equation~\req{noise-remove2} implies that the
estimates  obtained with the   threshold
$\sigma \sqrt{2 \log \abs{\Om_n}}$  are,
with probabilities tending  to one,
at least as smooth as $\signal_n$.
Hence the relation \req{noise-remove2}
is often used as  theoretical justification for using the
universal threshold choice $\sigma \sqrt{2 \log \abs{\Om_n}}$ originally proposed by  Donoho and Johnstone (see \cite{Don95,DonJoh94}).
 The following Proposition~\ref{prop:denoise} states that
the same denoising property indeed holds  true for
any normalized frame. Actually it proves much more:
First, we verify~\req{noise-remove2} for a wide class of thresholds including the Donoho-Johnstone  threshold.
Second, we show that this class in fact includes all
thresholds that  satisfy the denoising
property~\req{noise-remove2} -- provided
that  the frames are asymptotically stable.
Our results can be seen as a generalization and a refinement of~\cite[Theorem~4.1]{Don95} from the basis case to the possibly redundant frame case.

\begin{proposition}[Thresholds yielding the denoising property]\label{prop:denoise}
Assume that $\dict_n$ are frames of $\R^{\I_n}$ having  normalized frame elements  and analysis operators $\Base_n$, and let $\skl{\noise_n}_{n\in\N}$ be a sequence of noise vectors in $\R^{\I_n}$ with independent $N\skl{0,\sigma^2}$-distributed entries.
\begin{enumerate}[label=(\alph*),topsep=0.0em]
\item\label{it:denoise1}
If $\skl{\dict_n}_{n \in \N}$ is asymptotically stable, then  a sequence  $\mkl{T_n}_{n \in \N}$
of thresholds satisfies~\req{noise-remove2} if and only
if it has  the form
\begin{equation} \label{eq:denoise-sharp}
T_n  :=
\sigma \sqrt{2 \log \abs{\Om_n}}
+
\sigma \, \frac{2 z_n  -  \log\log \abs{\Om_n}- \log \pi}
{2 \sqrt{2 \log \abs{\Om_n}}}
\quad \text{ with  }\quad
\lim_{n \to \infty} z_n   =  \infty \,.
\end{equation}

\item\label{it:denoise2}
If  $\skl{\dict_n}_{n \in \N}$ is  not  necessarily asymptotically
stable, then still any sequence
$\mkl{T_n}_{n \in \N} \subset \kl{0,\infty}$
of the  form~\req{denoise-sharp} satisfies
the asymptotic denoising property~\req{noise-remove2}.
\end{enumerate}
\end{proposition}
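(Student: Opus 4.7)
\textbf{Reparametrization.} For each $n$, set $a_n = a\mkl{\chi,\abs{\Om_n}}$ and $b_n = b\mkl{\chi,\abs{\Om_n}}$ as in \req{a-abs}, \req{b-abs}. Since $a_n > 0$, any threshold $T_n \in (0,\infty)$ can be written uniquely in the form $T_n = \sigma a_n z_n + \sigma b_n$, which is precisely \req{denoise-sharp}. Both statements \ref{it:denoise1} and \ref{it:denoise2} then reduce to asking whether $z_n \to \infty$ implies (and, under asymptotic stability, is implied by) the denoising property \req{noise-remove2}.

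\textbf{Part \ref{it:denoise1}.} Here Theorem~\ref{thm:main} is available and yields the Gumbel limit $\wk\set{\snorm{\Base_n \noise_n}_\infty \leq \sigma a_n z + \sigma b_n} \to \exp\mkl{-e^{-z}}$ for every fixed $z \in \R$. For the ``if'' direction, assume $z_n \to \infty$, fix an arbitrary $z \in \R$, and note that $T_n \geq \sigma a_n z + \sigma b_n$ for all sufficiently large $n$. Monotonicity of the distribution function together with Theorem~\ref{thm:main} then gives
\begin{equation*}
\liminf_{n \to \infty} \wk\set{ \snorm{\Base_n \noise_n}_\infty \leq T_n } \;\geq\; \exp\mkl{-e^{-z}} \,.
\end{equation*}
Sending $z \to \infty$ yields the limit $1$. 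For the ``only if'' direction, suppose $z_n \not\to \infty$. Then some subsequence $z_{n_k}$ is bounded above by a constant $z^\ast \in \R$, so that $T_{n_k} \leq \sigma a_{n_k} z^\ast + \sigma b_{n_k}$ and another application of Theorem~\ref{thm:main} yields $\limsup_{k} \wk\set{ \snorm{\Base_{n_k} \noise_{n_k}}_\infty \leq T_{n_k} } \leq \exp\mkl{-e^{-z^\ast}} < 1$, which contradicts \req{noise-remove2}.

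\textbf{Part \ref{it:denoise2}.} Without asymptotic stability we cannot invoke Theorem~\ref{thm:main}, but sufficiency still follows from a Bonferroni bound combined with the Mills ratio. Since each frame element is normalized, Lemma~\ref{lem:covariance} gives $\sinner{\base_\om^n}{\noise_n} \sim N\skl{0,\sigma^2}$, hence a union bound yields
\begin{equation*}
\wk\set{ \snorm{\Base_n \noise_n}_\infty > T_n } \;\leq\; 2 \abs{\Om_n} \,\bkl{1 - \Phi\skl{T_n/\sigma}}
\;\leq\; \frac{2 \abs{\Om_n}}{T_n/\sigma\, \sqrt{2\pi}} \, \exp\kl{-\frac{T_n^2}{2\sigma^2}} \,.
\end{equation*}
Substituting $T_n = \sigma a_n z_n + \sigma b_n$, a short expansion shows
\begin{equation*}
\frac{T_n^2}{2\sigma^2} \;=\; \log \abs{\Om_n} + z_n - \tfrac{1}{2}\log\log\abs{\Om_n} - \tfrac{1}{2}\log\pi + \tfrac{1}{2} \skl{a_n z_n}^2 \,,
\end{equation*}
so that the upper bound above simplifies to $\mkl{1+o(1)}\,e^{-z_n}\,e^{-(a_n z_n)^2/2}$, which tends to zero whenever $z_n \to \infty$ (the extra factor being at most $1$). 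This establishes \req{noise-remove2} without any stability hypothesis.

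\textbf{Anticipated difficulty.} Part \ref{it:denoise1} is essentially a packaging of Theorem~\ref{thm:main} and should go through without surprises. The only slightly delicate point is the Mills-ratio arithmetic in \ref{it:denoise2}: one must verify that the normalizing constants $a_n,b_n$ in \req{a-abs}--\req{b-abs} cancel \emph{exactly} against the union-bound prefactor $\abs{\Om_n}/T_n$, leaving behind only the benign factor $e^{-z_n}$. This is the step where the precise form of the Donoho--Johnstone normalization is being used in an essential way.
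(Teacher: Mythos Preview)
Your argument for part~\ref{it:denoise1} is correct and is exactly what the paper does: it says the claim ``immediately'' follows from Theorem~\ref{thm:main}, and you have simply spelled out the monotonicity/subsequence reasoning behind that word.

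For part~\ref{it:denoise2} you take a genuinely different route from the paper. The paper invokes Sidak's inequality (Lemma~\ref{lem:sidak}) to dominate $\wk\sset{\snorm{\Base_n\noise_n}_\infty \leq T_n}$ from below by the same probability for a vector with \emph{independent} $N(0,\sigma^2)$ entries, and then quotes part~\ref{it:denoise1} (or equivalently Proposition~\ref{prop:abs}) for that independent vector. Your union-bound/Mills-ratio argument is more elementary in that it avoids Sidak entirely; the paper's argument is cleaner in that it avoids any arithmetic with the normalizing constants. Both are valid.

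One small correction to your computation: the displayed identity for $T_n^2/(2\sigma^2)$ is not exact. Writing $T_n/\sigma = a_n z_n + b_n$ and squaring gives, besides the terms you listed, the cross term $-z_n\,\dfrac{\log\log\abs{\Om_n}+\log\pi}{4\log\abs{\Om_n}}$ and a harmless positive $O\!\bigl((\log\log\abs{\Om_n})^2/\log\abs{\Om_n}\bigr)$ term. The first of these depends on $z_n$ and so cannot be absorbed into a ``$(1+o(1))$'' prefactor as written. The fix is easy: for large $n$ one has $a_n b_n \geq 1/2$, hence $(T_n/\sigma)^2 \geq b_n^2 + z_n$ once $z_n>0$, and this inequality already makes your bound collapse to a constant times $e^{-z_n/2}\to 0$. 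So the idea is right; just replace the claimed equality by a one-sided estimate.
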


\begin{proof}\mbox{}
\ref{it:denoise1}
Theorem~\ref{thm:main} immediately  implies that
a sequence $\skl{T_n}_{n\in\N}$
satisfies~\req{noise-remove2} if and only  if it has the form
\begin{equation*}
T_n
 =
\sigma \sqrt{2 \log \abs{\Om_n}} +
\sigma  \, \frac{2 z_n  -   \log\log
\abs{\Om_n} - \log \pi}{ 2\sqrt{2 \log \abs{\Om_n}}}
\end{equation*}
for  some sequence $\skl{z_n}_{n \in \N}$ with $z_n \to \infty$.

\ref{it:denoise2}
Now let   $\dict_n$  be any sequence  of frames that is not necessarily asymptotically stable.
Further, let $\eta_n$ be a sequence of  random vectors  with independent $N\skl{0,\sigma^2}$-distributed entries.
Since $\Base_n \noise_n$ is a random vector with possibly dependent
$N\skl{0,\sigma^2}$-distributed entries,   Lemma~\ref{lem:sidak}
implies that
\begin{equation*}
	\wk
	\set{ \norm{\Base_n \noise_n}_\infty  \leq
	T_n }
	\geq
	\wk
	\set{ \norm{\eta_n}_\infty  \leq
	T_n} \,.
\end{equation*}	
By Item~\ref{it:denoise1}
we already know that $\wk \{ \snorm{\eta_n }_\infty \leq  T_n \} \to 1$ as $n \to \infty$,
for any sequence of thresholds satisfying \req{denoise-sharp}, and hence the same must  hold  true for
$\wk\{ \snorm{\Base_n \noise_n}_\infty  \leq
 T_n\}$.
\end{proof}

According to Proposition~\ref{prop:denoise}, any sequence $\skl{z_n}_{n\in\N}$  with $\lim_{n \to \infty} z_n  =  \infty$   defines a sequence
of thresholds~\req{denoise-sharp} that satisfies the asymptotic denoising property.
In particular, by taking  $2 z_n = \log\log \abs{\Om_n} + \log \pi$ the thresholds
in \req{denoise-sharp} reduce to the universal threshold  $\sigma \sqrt{2 \log \abs{\Om_n}}$ of Donoho and Johnstone.
Proposition~\ref{prop:denoise}  further shows  that the asymptotic relation
$T_n \sim \sigma \sqrt{2 \log \abs{\Om_n}}$ alone is not
sufficient  for the denoising property~\req{noise-remove2} to hold and
that second order approximations have to be considered. One may call a
thresholding sequence $\skl{T_n}_n$ smaller than
$\skl{T_n'}_n$,  if $ \mkl{T_n' - T_n} \, T_n \to \infty$
for $n \to \infty$.
The  smaller the thresholding sequence
is taken, the  slower the convergence  of $\wk \mset{ \snorm{\Base_n \noise_n}_\infty \leq   T_n}$ will be,
and hence this just yields a different compromise between
noise reduction and signal approximation.

\subsection{Extreme Value Threshold:
Sharp Confidence Regions}
\label{sec:alphathresh}

For the following notice that the soft-thresholding estimate
$\hat\Para_n =  \softop \kl{\Data_n,T_n}$ with thresholding parameter $T_n $ is an element
of  the $\enorm{}_{\infty}$-ball
\begin{equation}\label{eq:conf-ball}
	\ball\mkl{\Data_n, T_n} :=
	\set{\bar\Para_n \in \R^{\Om_n} :
	 \snorm{\bar\Para_n - \Data_n}_\infty
	 \leq T_n}
\end{equation}
around the given data $\Data_n$.
Our aim is to select the thresholding value $T_n$
in such a way,  that $\ball\mkl{\Data_n, T_n}$ is an asymptotically  sharp  confidence  region corresponding to some  prescribed  significance level  $\alpha$,
in the sense that the probability that  we have $\Para_n \in \ball\mkl{\Data_n, T_n}$ tends to  $1-\alpha$ as
$n \to \infty$.
By definition, $\Para_n \in \ball\skl{\Data_n, T_n}$ if and only if $\snorm{\Para_n - \Data_n}_\infty \leq  T_n$.  The data model $\Data_n = \Para_n + \Base_n\noise_n$ thus implies  that
\begin{equation}\label{eq:confidence1}
\wk \set{
\Para_n \in \ball\mkl{\Data_n, T_n}
\,;
\forall \Para_n  \in \range \skl{\Base_n}
}
=
\wk\set{\norm{\Base_n\eps_n}_\infty \leq
T_n } \,.
\end{equation}
Here and in similar situations,
$\wk \set{ \Para_n \in \ball\mkl{\Data_n, T_n}
\,;
\forall \Para_n  \in \range \skl{\Base_n}
}$ denotes the   probability
of the intersection   of  all the events $\set{\Para_n \in \ball\mkl{\Data_n, T_n}}$
taken over all $\Para_n \in \range \skl{\Base_n}$.

Now assume that the frames are asymptotically stable.
Then Theorem~\ref{thm:main} states that the probabilities in Equation~\req{confidence1} with $T_n
= \sigma a\mkl{\chi, \abs{\Om_n}}z  +
\sigma  b\mkl{\chi, \abs{\Om_n}}$ tend to the Gumbel distribution $\exp \kl{-\exp\kl{-z}}$.
This suggests the following threshold choice
based on the quantiles of the limiting Gumbel
distribution.

\begin{definition}[Extreme value threshold] \label{def:alphathresh}
Let  $\skl{\alpha_n}_{n\in\N} \in \kl{0,1}$
be any sequence of significance levels,
denote by
$z\skl{\alpha_n}  =
- \log \log \mkl{1/\skl{1-\alpha_n}}$  the
$\alpha_n$-quantile of  the Gumbel distribution,
and set
\begin{equation}\label{eq:alphathresh}
T \kl{\alpha_n, \abs{\Om_n}}
:=
\sigma  \sqrt{2\log \abs{\Om_n}}
+
\sigma \,
\frac{2z\skl{\alpha_n} - \log\log \abs{\Om_n} - \log \pi}{2  \sqrt{2\log \abs{\Om_n}}}
\,.
\end{equation}
We then name   $T \mkl{\alpha_n, \abs{\Om_n}}$
the sequence of extreme value threshold (EVT)
corresponding to the significance levels  $\alpha_n$.
\end{definition}

The following Theorem~\ref{thm:confidence}
states that the EVTs defined by Equation~\req{alphathresh} indeed define asymptotically sharp confidence regions.  Actually it is mere a corollary of the extreme value  result derived in Theorem~\ref{thm:main}.

\begin{theorem}[Asymptotically sharp confidence regions]\label{thm:confidence}
Let $\skl{\dict_{n}}_{n \in \N}$ be a asymptotically stable family of  frames in $\R^{\I_n}$ and let $\skl{\alpha_n}_{n \in \N}$ be a sequence of numbers  in
$\kl{0,1}$  converging  to some $\alpha \in [0,1)$.
Then, with the extreme value thresholds $T \mkl{\alpha_n, \abs{\Om_n}}$ defined in Equation~\req{alphathresh}, we have
 \begin{equation}\label{eq:confidence}
\lim_{n \to \infty}
\wk\set{
\Para_n \in
\ball\mkl{\Data_n, T \mkl{\alpha_n, \abs{\Om_n}}}
\,;
\forall \Para_n  \in \range \skl{\Base_n}}
 = 1-\alpha  \,.
\end{equation}
Hence, the sets $\ball\mkl{\Data_n, T \mkl{\alpha_n, \abs{\Om_n}}}$ defined in  \req{conf-ball} are asymptotically sharp confidence regions with significance level $\alpha$.
\end{theorem}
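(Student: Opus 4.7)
The plan is to recognize that Theorem~\ref{thm:confidence} is essentially a corollary of Theorem~\ref{thm:main} once the confidence region probability is rewritten as an extreme value probability. First, I would invoke identity~\req{confidence1}, which reduces the quantity of interest to
\[
\wk\set{\Para_n \in \ball\mkl{\Data_n, T\mkl{\alpha_n, \abs{\Om_n}}} ;\, \forall \Para_n \in \range\skl{\Base_n}}
= \wk\set{\snorm{\Base_n\eps_n}_\infty \leq T\mkl{\alpha_n, \abs{\Om_n}}}\,.
\]
By construction of the EVT in~\req{alphathresh}, the right hand side equals $F_n\mkl{z\skl{\alpha_n}}$, where
\[
F_n(z) := \wk\set{\snorm{\Base_n\eps_n}_\infty \leq \sigma\, a\mkl{\chi,\abs{\Om_n}}\, z + \sigma\, b\mkl{\chi,\abs{\Om_n}}}
\]
and $z\skl{\alpha_n} = -\log\log\mkl{1/\skl{1-\alpha_n}}$ is the $\alpha_n$-quantile of the Gumbel distribution $G(z) := \exp\mkl{-e^{-z}}$.

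Second, since the frames $\dict_n$ are asymptotically stable, Theorem~\ref{thm:main} yields the pointwise limit $F_n(z) \to G(z)$ for every $z \in \R$. Because $G$ is continuous and monotone on $\R$, Pólya's extension of the convergence theorem upgrades this pointwise convergence to uniform convergence on $\R$; this is what is meant by the remark in the excerpt that the pointwise limit in Definition~\ref{def:gumbeltype} automatically holds uniformly.

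Third, I would combine uniform convergence with continuity of the quantile function. Since $\alpha_n \to \alpha \in [0,1)$ and the Gumbel quantile map $\alpha \mapsto -\log\log\mkl{1/\skl{1-\alpha}}$ is continuous on $[0,1)$, the numbers $z\skl{\alpha_n}$ form a bounded sequence with $z\skl{\alpha_n} \to z(\alpha)$. Writing
\[
\mabs{F_n\mkl{z\skl{\alpha_n}} - G\mkl{z(\alpha)}}
\leq
\sup_{z\in\R} \mabs{F_n(z) - G(z)} + \mabs{G\mkl{z\skl{\alpha_n}} - G\mkl{z(\alpha)}}\,,
\]
the first term vanishes by uniform convergence and the second by continuity of $G$. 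Since $G\mkl{z(\alpha)} = 1-\alpha$ by definition of the quantile, this yields~\req{confidence}.

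The only step requiring care is the passage from pointwise to uniform convergence, which would be the obstacle were $G$ not continuous; here it is handled for free by Pólya's theorem. The case $\alpha = 0$ is included because $z\skl{\alpha_n} \to \infty$ still gives a bounded shift of the argument relative to $b\mkl{\chi,\abs{\Om_n}}$ on any finite prefix, and uniform convergence of $F_n$ to $G$ again closes the argument.
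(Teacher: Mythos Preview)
Your approach is exactly the paper's: reduce via identity~\req{confidence1} to $\wk\{\snorm{\Base_n\eps_n}_\infty \leq T(\alpha_n,\abs{\Om_n})\}$ and then invoke Theorem~\ref{thm:main}. The paper's proof is terser and does not spell out the uniform-convergence step needed to handle the moving argument $z(\alpha_n)$; your use of P\'olya's theorem makes that step explicit and is the right tool.

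One small repair is needed in your treatment of $\alpha=0$. Your claim that ``the numbers $z(\alpha_n)$ form a bounded sequence with $z(\alpha_n)\to z(\alpha)$'' fails there, since $z(\alpha_n)\to+\infty$, and the final paragraph's talk of a ``bounded shift \dots\ on any finite prefix'' does not mend it. The clean fix is to avoid $z(\alpha)$ altogether: since $G(z(\alpha_n))=1-\alpha_n$ by definition of the quantile, write
\[
\mabs{F_n\mkl{z(\alpha_n)} - (1-\alpha)}
\;\leq\;
\sup_{z\in\R}\mabs{F_n(z)-G(z)} \;+\; \mabs{(1-\alpha_n)-(1-\alpha)}\,,
\]
and both terms vanish, the first by uniform convergence over all of $\R$ (which is insensitive to $z(\alpha_n)\to\infty$) and the second by $\alpha_n\to\alpha$. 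With this adjustment your argument is complete and covers all $\alpha\in[0,1)$.
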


\begin{proof}
According to \req{confidence1} it is sufficient to show that
$\wk\mset{\snorm{\Base_n\eps_n}_\infty \leq T \mkl{\alpha_n, \abs{\Om_n}}}
\to 1-\alpha$ as $n \to \infty$.
 Theorem~\ref{thm:main} and the definition  of the thresholds in~\req{alphathresh} imply that the probability of the event $\mset{\snorm{\Base_n\noise_n}_\infty \leq  T \mkl{\alpha_n, \abs{\Om_n}}}$ converges  to $\exp\mkl{-\exp\skl{-
z\skl{\alpha}}}$ as $n \to \infty$.
Since the quantile $z\skl{\alpha}$ is defined as the solution  of  $\exp \kl{-\exp\kl{-z}} = 1- \alpha$ this yields
Equation~\req{confidence}.
\end{proof}

\begin{corollary}\label{cor:confidence}
Let $\skl{\dict_{n}}_{n \in \N}$ be any family of frames (not necessarily asymptotic stable) having  normalized elements,
and consider the data model $\Data_n = \Para_n +  \Base_n\noise_n$ with noise vectors $\noise_n$ having possibly dependent $N\mkl{0,\sigma^2}$-distributed entries.
Then,  it still holds that
\begin{equation}\label{eq:confidence-weak}
\liminf_{n \to \infty}
\wk \set{
\Para_n \in \ball\mkl{\Data_n, T \mkl{\alpha_n, \abs{\Om_n}}}
\,;
\forall \Para_n  \in \range \skl{\Base_n}
}
\geq 1-\alpha  \,.
\end{equation}
\end{corollary}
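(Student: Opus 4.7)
The plan is to reproduce the Sidak comparison used in the proof of Proposition~\ref{prop:denoise}\ref{it:denoise2}, but to replace the crude denoising property $\wk\set{\cdot}\to 1$ invoked there by the sharper Gumbel limit of Theorem~\ref{thm:main} applied to an i.i.d.\ Gaussian surrogate. Setting $T_n := T\kl{\alpha_n, \abs{\Om_n}}$, the reformulation \req{confidence1} uses only the data model and not any stability property, so the event in \req{confidence-weak} still coincides with $\set{\snorm{\Base_n\noise_n}_\infty \leq T_n}$. It is therefore enough to prove
\begin{equation*}
\liminf_{n\to\infty}\wk\set{\snorm{\Base_n\noise_n}_\infty \leq T_n}\geq 1-\alpha.
\end{equation*}

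The next step is to compare against an independent surrogate. Since $\Base_n\noise_n$ is centered and jointly Gaussian with $N\kl{0,\sigma^2}$ marginals (the frame elements are normalized and the entries of $\noise_n$ are $N\kl{0,\sigma^2}$), Lemma~\ref{lem:sidak} produces
\begin{equation*}
\wk\set{\snorm{\Base_n\noise_n}_\infty \leq T_n}\geq \wk\set{\snorm{\eta_n}_\infty \leq T_n},
\end{equation*}
where $\eta_n\in\R^{\Om_n}$ has independent $N\kl{0,\sigma^2}$ entries. The right-hand side is then handled by Theorem~\ref{thm:main}: view $\eta_n$ as the frame coefficients of itself with respect to the canonical basis of $\R^{\Om_n}$, whose Gram matrix is the identity. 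Definition~\ref{def:stab}\ref{it:frame2} then holds vacuously and the upper frame bound equals one, so this canonical family is trivially asymptotically stable. Combined with the definition \req{alphathresh} and the continuity $z\skl{\alpha_n}\to z\skl{\alpha}$, this yields
\begin{equation*}
\lim_{n\to\infty}\wk\set{\snorm{\eta_n}_\infty \leq T_n} = \exp\kl{-e^{-z\skl{\alpha}}} = 1-\alpha.
\end{equation*}

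Passing to the liminf through the Sidak bound completes the proof. All of the real work is packaged into two already-established tools: Sidak's inequality (Lemma~\ref{lem:sidak}) and the Gumbel limit for asymptotically stable frames (Theorem~\ref{thm:main}); the present argument merely combines them and uses the canonical basis of $\R^{\Om_n}$ as a universal asymptotically stable comparison frame. The main obstacle is therefore no more than bookkeeping, namely checking that Sidak applies in the form required -- bounding the joint-exceedance probability of a centered Gaussian with arbitrary covariance by the product of its marginal probabilities -- which is exactly how Lemma~\ref{lem:sidak} was already deployed in Proposition~\ref{prop:denoise}\ref{it:denoise2}.
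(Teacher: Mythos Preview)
Your proof is correct and follows the same route as the paper's one-line argument, which simply cites Theorem~\ref{thm:confidence} together with Lemma~\ref{lem:sidak}: reduce to $\snorm{\Base_n\noise_n}_\infty$ via \req{confidence1}, apply Sidak's inequality to pass to an i.i.d.\ Gaussian surrogate, and then use the Gumbel limit for that surrogate. Your detour through Theorem~\ref{thm:main} applied to the canonical basis is equivalent to invoking Theorem~\ref{thm:confidence} (or, even more directly, Proposition~\ref{prop:abs}) for the independent case.
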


\begin{proof}
This follows from Theorem~\ref{thm:confidence}
and Lemma~\ref{lem:sidak}.
\end{proof}

Notice, that in Corollary~\ref{cor:confidence}
the sets $\ball\mkl{\Data_n, T \mkl{\alpha_n, \abs{\Om_n}}}$ are not necessarily asymptotically sharp confidence regions, in the sense that inequality~\req{confidence-weak} may be strict.
Actually,  we believe that  asymptotical stability of the frames $\dict_n$ is close to being  necessary for the sets $\ball\mkl{\Data_n, T \mkl{\alpha_n, \abs{\Om_n}}}$ defining asymptotically sharp confidence regions.
For specific highly  redundant  dictionaries where
asymptotic stability fails to hold
(such  as the translation invariant wavelet frame;
see Section~\ref{sec:ti})  we expect that
$ \wk \mset{ \snorm{\Base_n\noise_n}_\infty
\leq  \sigma a_nz + \sigma b_n }$ still converges to the Gumbel distribution
-- however with normalization sequences $a_n$ and $b_n$ being strictly smaller
than $\sigma a\mkl{\chi, \abs{\Om_n}}$ and $\sigma b\mkl{\chi, \abs{\Om_n}}$.
If this is the case, then the smaller  thresholds
$ T_n = \sigma a_nz \skl{\alpha_n} +  \sigma b_n $ again define sharp confidence regions.  Surprisingly,
results on the distributional convergence of
$\snorm{\Base_n\noise_n}_\infty$ or even
of $\max\mkl{\Base_n\noise_n}$
for redundant frames are almost nonexistent.

\subsection{Rate of Approximation}

Strictly taken, Theorem~\ref{thm:confidence} only claims
that the $\enorm{}_\infty$-balls $\ball\mkl{\Data_n, T \mkl{\alpha_n, \abs{\Om_n}}}$ turn into
confidence regions in the limit $n \to \infty$,
but it does not  directly give any result for finite $n$.
Sometimes it is argued that, even in the independent case without taking absolute values,
the rate of  convergence of  $\wk\mset{ \max\skl{\Base_n\noise_n} \leq T} $ to  the  Gumbel distribution is  known to be
rather  slow (see, for example, \cite[Section~2.4]{LeaLinRoo83}). Another option could be
to derive non-asymptotic coverage probabilities
along the lines of \cite{KerNicPic12}, however at the price of typically quite conservative confidence bands.

\begin{figure}[htb!]\centering
\includegraphics[width=0.3\textwidth,height=0.23\textwidth]{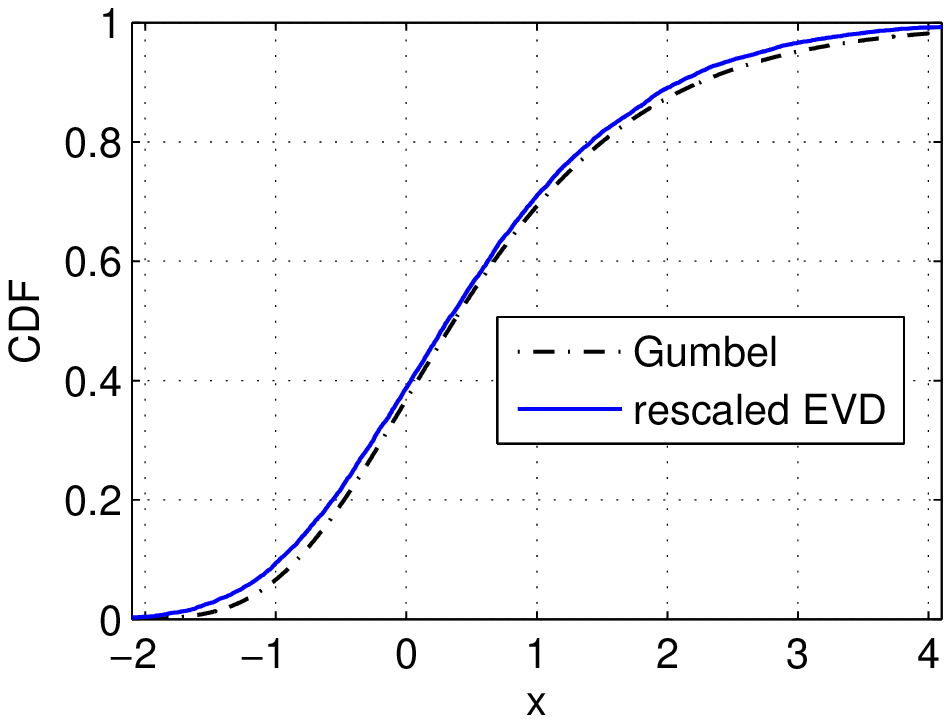}
\includegraphics[width=0.3\textwidth,height=0.23\textwidth]{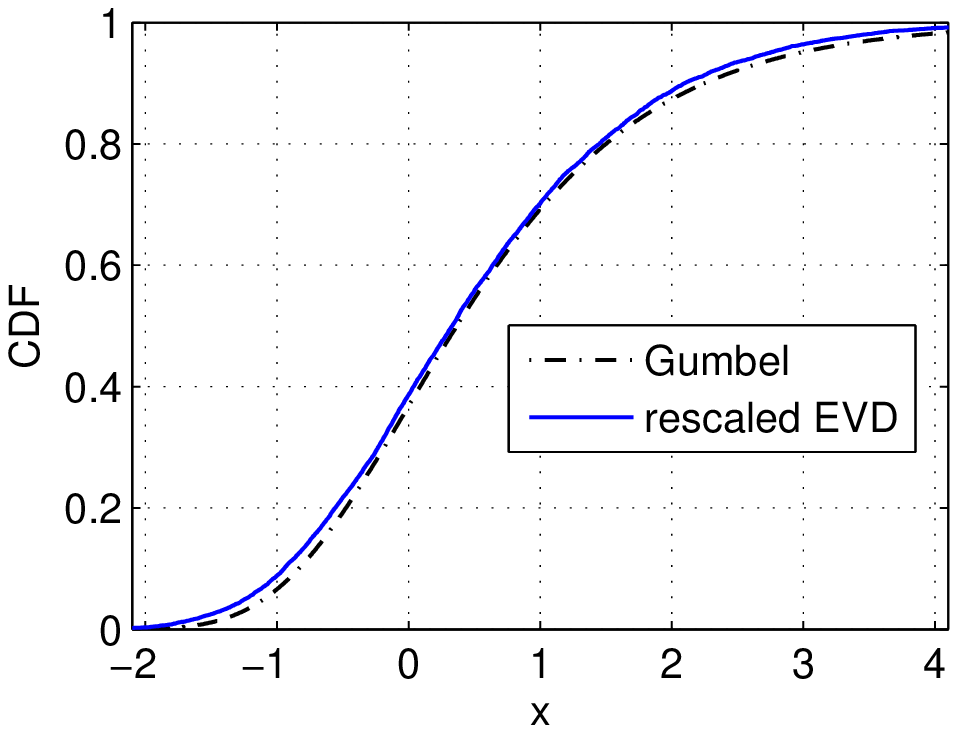}
\includegraphics[width=0.3\textwidth,height=0.23\textwidth]{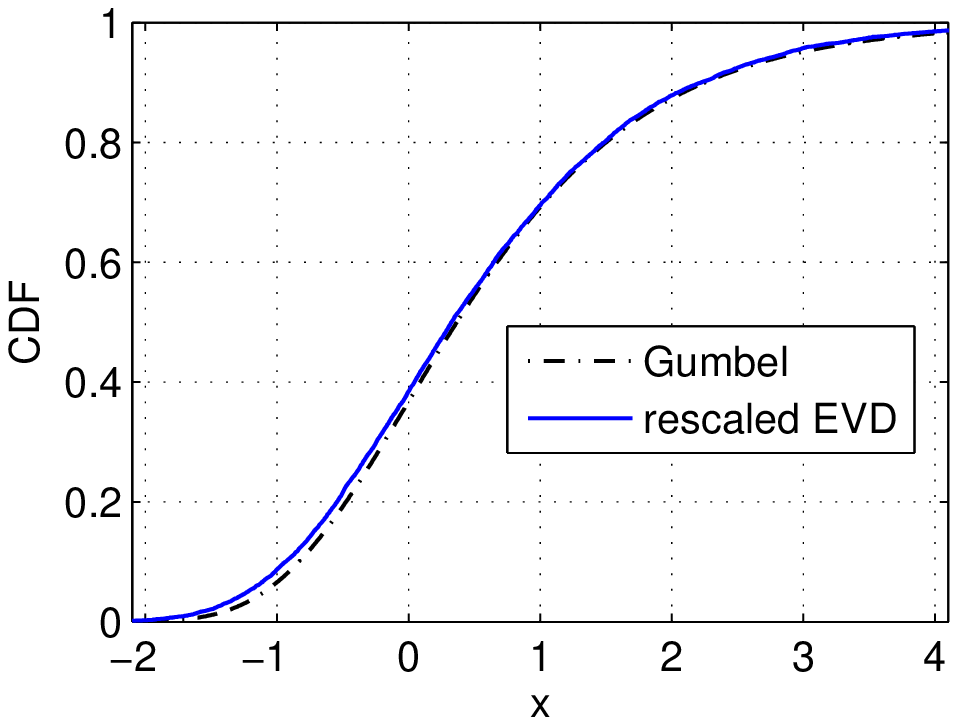}
\\
\includegraphics[width=0.3\textwidth,height=0.23\textwidth]{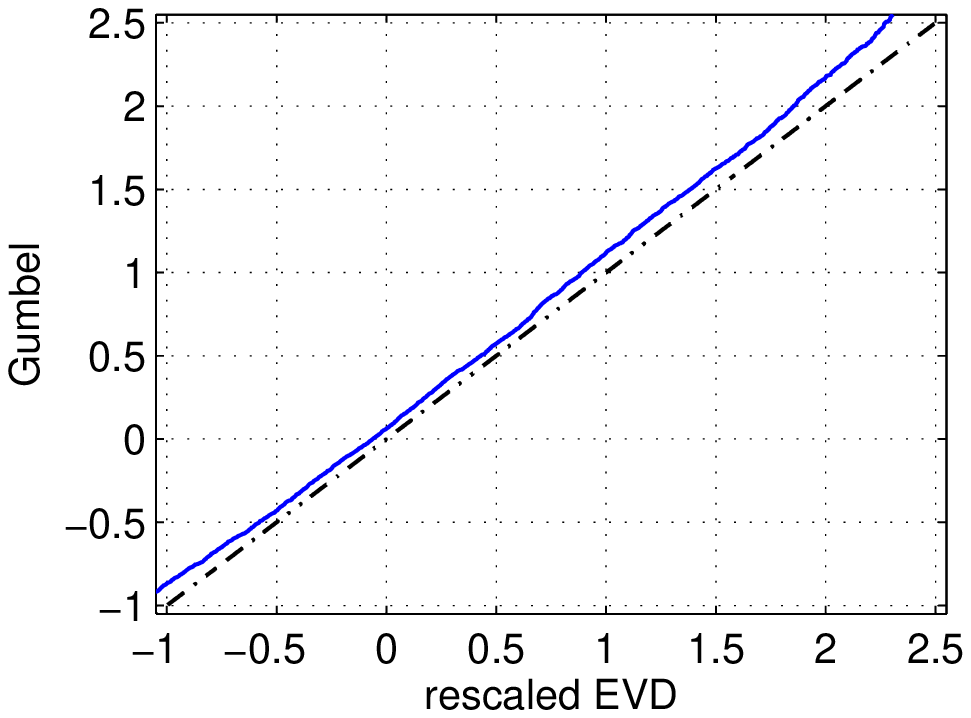}
\includegraphics[width=0.3\textwidth,height=0.23\textwidth]{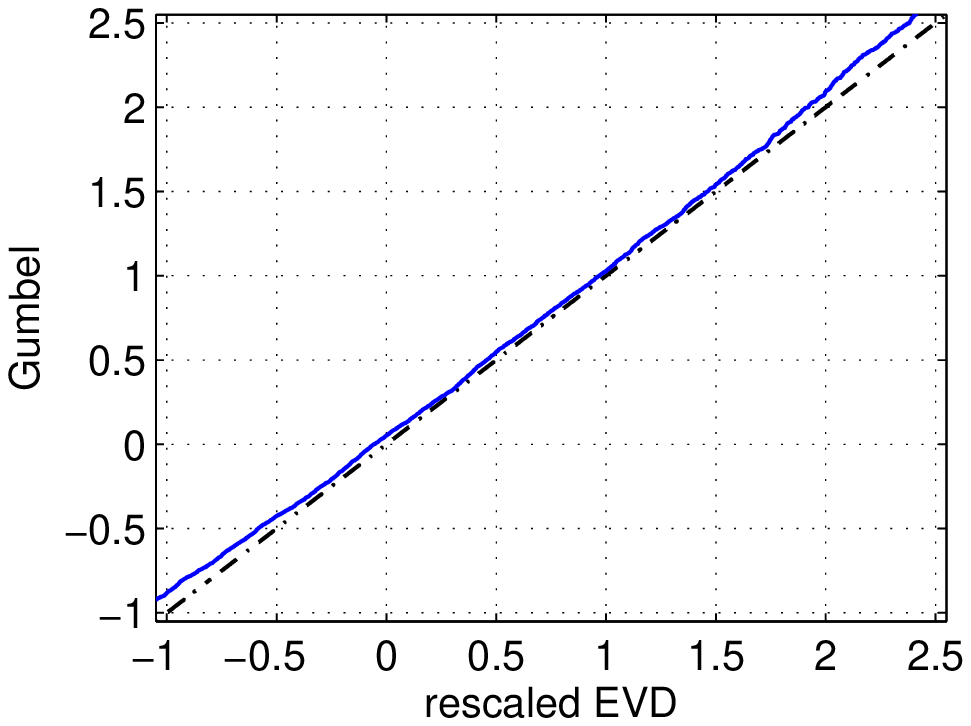}
\includegraphics[width=0.3\textwidth,height=0.23\textwidth]{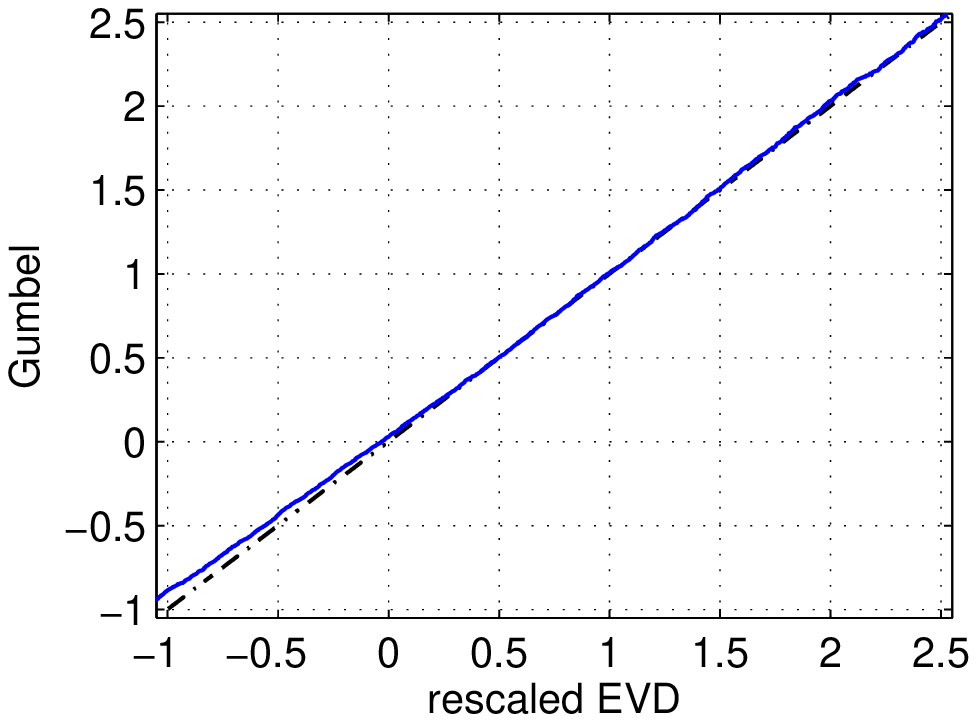}
\caption{Top: Rescaled distribution of
$\snorm{\Base_n\noise_n}_\infty $ and the Gumbel distribution for
$n=128$ (left), $n=512$ (middle) and $n=1024$ (right) .
Bottom: Q-Q-plot of those distributions.\label{fig:sine-dist}}
\end{figure}

Nevertheless, numerical simulations clearly demonstrate,
that even for moderate $n$, the approximation of
$\wk\mset{
\snorm{\Base_n\noise_n}_\infty
\leq
\sigma a\skl{\chi,\abs{\Om_n}} z  +
\sigma b\skl{\chi,\abs{\Om_n}} } $ with the limiting Gumbel distribution is quite good. This even holds true for redundant frames as  can be  seen from Figure~\ref{fig:sine-dist}, where   the distribution functions of the rescaled  maxima of the coefficients with respect to the  two times oversampled sine frame of Example~\ref{ex:sine2} are  compared with the limiting Gumbel distribution.  The top line in Figure~\ref{fig:sine-dist} displays the normalized empirical distributions of $\snorm{\Base_n\noise_n}_\infty$  for signal lengths of $n=128$, $n=512$ and $n=1024$ (computed from  $10000$ realizations in each case) and the limiting Gumbel distribution. As can be seen, there is only a small
difference between those  functions. The bottom line in  Figure~\ref{fig:sine-dist} shows a Q-Q-plot (quantile against quantile)  of those distributions and again indicates that the quantiles of  the rescaled maximum for finite $n$ are quite well approximated  by the ones of the limiting Gumbel distribution.

\subsection{Smoothness Estimates}
\label{sec:smoothness}

We have just seen that  the $\Para_n$ is contained in the
confidence regions  $\ball\skl{\Data_n, T \skl{\alpha_n, \sabs{\Om_n}}}$ around the data $\Data_n$ with probability tending to $1-\alpha$. Moreover, by definition,  the soft-thresholding estimate $\hat \Para_n = \softop \kl{ \Data_n, T \skl{\alpha_n, \sabs{\Om_n}}}$ is contained in $\ball\skl{\Data_n, T \skl{\alpha_n, \sabs{\Om_n}}}$, too.
The following theorem shows that the soft-thresholding
estimate is indeed the smoothest element in this confidence region, with smoothness measured in terms of a wide class of functionals.

\begin{theorem}[Smoothness estimates]\label{thm:smooth}
Let $\mkl{\J_n}_{n \in \N}$ be a family of functionals
$\J_n \colon \R^{\Om_n} \to \R \cup \set{\infty}$
having the property that
\begin{equation}\label{eq:Jmono}
\J_n \mkl{\Para_n} \leq \J_n \mkl{\bar\Para_n}
\; \text{ whenever } \;
\sabs{\Para_n\skl{\om}} \leq \sabs{\bar\Para_n\skl{\om}}
\; \text{ for all } \; \om \in \Om_n \,.
\end{equation}
Moreover, consider the data model $\Data_n = \Para_n +
\Base_n\noise_n$, where $\skl{\noise_n}_{n\in\N}$ is a sequence of random vectors with $N\skl{0,\sigma^2}$-distributed entries,
let  $\skl{\alpha_n}_{n \in \N}$ be a sequence
in $\skl{0,1}$  converging to some $\alpha\in [0,1)$,
and denote
$\hat\Para_n := \softop\mkl{ \Data_n, T \skl{\alpha_n, \sabs{\Om_n}} }$.
Then,
\begin{equation}\label{eq:smooth}
\liminf_{n\to\infty}
\wk \set{
\J_n\mkl{\hat\Para_n}
\leq  \J_n\mkl{\Para_n}
\,;
\forall \Para_n  \in \range \skl{\Base_n}
} \geq
1 -  \alpha \,.
\end{equation}
Hence,  the soft-thresholding estimate $\hat\Para_n$ is at least  as smooth as
the original parameter $\Para_n$, with probability tending to $1-\alpha$ as $n \to \infty$, where
smoothness is measured in terms of  any  family of
functionals  $\J_n$ satisfying~\req{Jmono}.
\end{theorem}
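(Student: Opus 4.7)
The plan is to reduce the smoothness estimate to the confidence region bound of Corollary~\ref{cor:confidence} via a pointwise shrinkage property of soft-thresholding. The elementary observation is: for every $x, e \in \R$ and $T > 0$ with $\sabs{e} \leq T$, the triangle inequality together with monotonicity of the positive part yield
\[
\sabs{\soft(x + e, T)} = (\sabs{x + e} - T)_+ \leq (\sabs{x} + \sabs{e} - T)_+ \leq \sabs{x}.
\]
Hence as soon as the perturbation does not exceed the threshold, soft-thresholding does not enlarge the magnitude of the underlying signal coordinate.

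Using this coordinatewise, I would introduce the noise event $A_n := \mset{\snorm{\Base_n\noise_n}_\infty \leq T\mkl{\alpha_n, \abs{\Om_n}}}$, which depends only on $\noise_n$ and not on $\Para_n$. On $A_n$, for any $\Para_n \in \range(\Base_n)$ and every $\om \in \Om_n$, the choice $x = \Para_n(\om)$ and $e = (\Base_n\noise_n)(\om) = \Data_n(\om) - \Para_n(\om)$ gives $\sabs{\hat\Para_n(\om)} = \sabs{\soft(\Data_n(\om), T\mkl{\alpha_n, \abs{\Om_n}})} \leq \sabs{\Para_n(\om)}$. The monotonicity hypothesis~\req{Jmono} on $\J_n$ then yields $\J_n(\hat\Para_n) \leq \J_n(\Para_n)$ on $A_n$, and this bound holds \emph{simultaneously} for all $\Para_n \in \range(\Base_n)$ because $A_n$ does not depend on $\Para_n$. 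Therefore
\[
\wk\set{\J_n(\hat\Para_n) \leq \J_n(\Para_n); \forall \Para_n \in \range(\Base_n)} \geq \wk(A_n).
\]

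To conclude, I would invoke Corollary~\ref{cor:confidence}, which applies without any asymptotic stability assumption on $\dict_n$ and gives $\liminf_{n \to \infty} \wk(A_n) \geq 1 - \alpha$; combining this with the previous inequality yields~\req{smooth}. There is essentially no obstacle in this argument: the only non-trivial input, the Gumbel-type tail bound on $\snorm{\Base_n\noise_n}_\infty$, has already been established, and what remains is the one-line shrinkage inequality together with the bookkeeping that guarantees uniformity in $\Para_n$, which is transparent because $A_n$ is a $\Para_n$-free event.
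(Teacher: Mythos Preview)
Your proposal is correct and follows essentially the same route as the paper: both arguments reduce to the shrinkage property $\sabs{\hat\Para_n(\om)} \leq \sabs{\Para_n(\om)}$ on the noise event $\{\snorm{\Base_n\noise_n}_\infty \leq T\}$, then invoke Corollary~\ref{cor:confidence} and the monotonicity~\req{Jmono}. The only cosmetic difference is that the paper phrases the shrinkage step as ``$\hat\Para_n$ has minimal coordinatewise magnitude among all elements of the ball $\ball(\Data_n,T)$, and $\Para_n$ lies in that ball,'' whereas you compute the inequality $\sabs{\soft(x+e,T)} \leq \sabs{x}$ directly; these are equivalent formulations of the same one-line fact.
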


\begin{proof}
The definition of the soft-thresholding function implies that $\hat\Para_n$ is an element of the confidence region $\ball\mkl{\Data_n, T \mkl{\alpha_n, \abs{\Om_n}}}$
and that for every  other element $\bar\Para_n$ contained in this  confidence region we have $\sabs{\hat\Para_n\skl{\om}} \leq \sabs{\bar\Para_n \skl{\om}}$ for all $\om \in \Om_n$.
By Corollary~\ref{cor:confidence} the true parameter
$\Para_n$ is contained in  $\ball\mkl{\Data_n, T \mkl{\alpha_n, \abs{\Om_n}}}$, too, with a probability tending to $1-\alpha$.
We conclude that
\begin{equation}\label{eq:shrinkage}
\liminf_{n \to \infty}
\wk\set{
\sabs{\hat\Para_n\skl{\om}} \leq \sabs{\Para_n\skl{\om}}
\,;
\om \in \Om_n
\,;
\forall \Para_n  \in \range \skl{\Base_n}
}
\geq 1- \alpha  \,.
\end{equation}
The  assumption~\req{Jmono} on component-wise monotonicity
of  the functionals  $\J_n$  now implies that the event
$\mset{\sabs{\hat\Para_n\skl{\om}} \leq \sabs{\Para_n\skl{\om}} \,;
\forall \om \in \Om_n
\,;
\forall \Para_n  \in \range\skl{\Base_n}   }$
is contained in the event $\sset{ \J_n\skl{\hat\Para_n} \leq  \J_n\skl{\Para_n} \,; \forall \Para_n  \in \range\skl{\Base_n}}
$. Together with~\req{shrinkage} this  yields~\req{smooth}.
\end{proof}

\begin{remark}[Shrinkage property]
The proof of Theorem~\ref{thm:smooth} uses two main ingredients: First,  soft-thresholding selects  that element in $\ball\mkl{\Data_n, T \skl{\alpha_n, \sabs{\Om_n}}}$ which has minimal component-wise magnitudes and second,
the true coefficient $\hat\Para_n$ is contained in the set $\ball \mkl{\Data_n, T \skl{\alpha_n, \sabs{\Om_n}}}$ with probability tending to $1-\alpha$. The former property is often referred to as the   \emph{shrinkage property} of soft-thresholding and has already been  used in~\cite{Don95} for deriving  smoothness estimates for orthogonal wavelet soft-thresholding.
The second property relies on our extreme value
result derived in Theorem~\ref{thm:main}.
Notice,  that  the weaker result
$\wk \sset{ \J_n\skl{\hat\Para_n} \leq \J_n\skl{\Para_n} } \to 0$ using  the
threshold $\sigma\sqrt{2\log \abs{\Om_n}}$ is  well known;  compare  \cite{JohSil97}.
However, the proof of Theorem~\ref{thm:smooth} reveals that for asymptotically stable
frames the considered thresholds $T \skl{\alpha_n, \sabs{\Om_n}}$
are close to being the smallest ones yielding smoothness estimates of the form \req{smooth}.
For strongly redundant frames,  however, where asymptotic stability fails to hold, smaller thresholds  yielding the same smoothness bounds can exist.  In Theorem~\ref{thm:ti} we show that this is indeed the case for the dyadic discrete translation invariant wavelet system.
\end{remark}

Basic but important examples for functionals satisfying
the component-wise monotonicity property~\req{Jmono} are powers of weighted   $\ell^2$-norms,
\begin{equation*}
\mnorm{\Para_n}_2
:=
\sqrt{\sum_{\om \in \Om_n}
c\skl{\om} \, \mabs{\Para_n\skl{\om} }^2}
\qquad \text{ for some  } \;  c\skl{\edot} >0 \,.
\end{equation*}
In the case of wavelet and  Fourier frames, these norms
of the coefficients provide norm equivalents to Sobolev norms in the original signal domain (assuming an appropriate  discretization model  $\signal \mapsto \signal_n$).
Sobolev norms are definitely the most basic smoothness measures of functions. More general and also practically relevant  classes of smoothness measures  are Besov norms.  Assume for the moment  that $\dict_n$ is a wavelet frame  where  the index set has the  multiresolution form
$\Om_n = \mset{\kl{\la,k}: \la \in \La_n \text{ and } k\in D_\la}$ for some index sets $\La_n$ and $D_\la$ corresponding to scale/resolution and scale dependent location, respectively.
In this case one takes   the functional $\J_n$
as one of the weighted $\ell^{p,q}$-norms
\begin{equation*}
\mnorm{\Para_n}_{p,q}
:=
\sqrt[q]{\sum_{\la \in \La_n}
c\kl{\la} \mnorm{\Para_n\kl{\la, \cdot} }^q_p}
\qquad \text{ for  some } \;  c\kl{\edot} >0 \,.
\end{equation*}
These norms again satisfy the monotonicity
property~\req{Jmono} and moreover yield to
norm equivalents  of  Besov norms for properly  chosen weights  $ c\kl{\la}$; see Section~\ref{sec:wave}.
Such weighted $\kl{p,q}$-norms are also reasonable  in
combination with other multiresolution systems,
such as the curvelet frame (see Section~\ref{sec:curv}).

\subsection{Risk Estimates}

Although the  main focus in this work is on confidence regions and  smoothness estimates, in the following
Proposition~\ref{prop:risk} we shall verify that
using the EVTs of Definition~\ref{def:alphathresh} yields  risk estimates  similar to the oracle inequalities of~\cite{DonJoh94}.
The following result is non-standard regarding  two aspects:
First, it allows arbitrary frames instead of
orthonormal bases. Second, and  more importantly,
it considers our more general class of extreme value
thresholds instead of the universal threshold $\sigma \sqrt{2 \log\abs{\Om_n}}$.

\begin{proposition}[Oracle inequality]\label{prop:risk}
Let $\dict_n = \kl{\base_\om^n\colon \om \in \Om_n}$ be a frame in $\R^{\I_n}$ with
corresponding analysis operator
$\Base_n$.
Moreover, let  $\hat\signal_n =
\Base_n^+ \circ \softop \kl{\Base_n \skl{\data_n}, T} $ denote  the soft-thresholding
estimator in \req{est-soft} corresponding to the extreme value thresholds
$T  =  T \mkl{\alpha_n, \abs{\Om_n}}$ defined by Equation~\req{alphathresh}, and assume for simplicity that
$T \mkl{\alpha_n, \abs{\Om_n}} \leq  \sigma \sqrt{2 \log\abs{\Om_n}}$.
Then, we have
 \begin{multline} \label{eq:oracle}
\ew \kl{\norm{\signal_n -\hat\signal_n}^2 }
\leq
\frac{\sigma^2}{a_n} \, \biggl(  \log \kl{1/\skl{1-\alpha_n}}
\sqrt{\pi \log \abs{\Om_n} }
\\
+
\kl{1+ 2\log \abs{\Om_n}}
\sum_{\om \in \Omega_n}
\min\set{1,  \frac{\sabs{ \sinner{\base_{\om}^{n}}{\signal_n}}^2}{\sigma^2} }
\biggr)
\,.
\end{multline}
Here  $a_n$ is the lower frame bound of $\dict_N$;
see Equation  \req{frame-prop}.
\end{proposition}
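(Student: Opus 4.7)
The plan splits into three steps: a frame-theoretic reduction, a univariate soft-thresholding oracle inequality, and a Gaussian tail estimate tailored to the explicit form of~\req{alphathresh}.

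\textbf{Frame reduction.}
Using the identity~\req{inv}, write $\signal_n - \hat\signal_n = \Base_n^+\kl{\Para_n - \hat\Para_n}$ with $\Para_n = \Base_n\signal_n$ and $\hat\Para_n = \softop\kl{\Data_n, T}$. Because $\Base_n\Base_n^+$ is the orthogonal projection onto $\range\kl{\Base_n}$ and the frame bound~\req{frame-prop} gives $a_n\snorm{w}^2 \leq \snorm{\Base_n w}^2$, one has $\snorm{\Base_n^+ v}^2 \leq \snorm{v}^2/a_n$ for every $v \in \R^{\Om_n}$. Hence
\begin{equation*}
\ew\snorm{\signal_n - \hat\signal_n}^2
\leq \frac{1}{a_n}\sum_{\om\in\Om_n}\ew\kl{\soft\kl{\Data_n\skl{\om}, T} - \Para_n\skl{\om}}^2.
\end{equation*}
By Lemma~\ref{lem:covariance}, each marginal $\skl{\Base_n\noise_n}\skl{\om}$ is $N\skl{0,\sigma^2}$-distributed; cross-coordinate correlations play no role here, so the task reduces to the MSE of soft-thresholding a single univariate Gaussian.

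\textbf{Univariate oracle inequality.}
Set $\mu = \Para_n\skl{\om}$, $z \sim N\skl{0,\sigma^2}$, $y = \mu+z$, and $r\kl{\mu} := \ew\kl{\soft\kl{y, T} - \mu}^2$. Stein's identity, applied to the Lipschitz map $y\mapsto \soft\kl{y,T}-y$, yields the closed form $r\kl{\mu} = \ew\ekl{\min\kl{y^2, T^2}} + \sigma^2 - 2\sigma^2\wk\set{\sabs{y}\leq T}$, from which the uniform bound $r\kl{\mu} \leq T^2 + \sigma^2$ is immediate. A direct differentiation in $\mu$ then gives $\tfrac{d}{d\mu}\ekl{r\kl{\mu} - \mu^2} = -2\mu\wk\set{\sabs{y}>T}$, so $\mu\mapsto r\kl{\mu} - r\kl{0} - \mu^2$ vanishes at $0$ and is nonincreasing on $[0,\infty)$ (and even by symmetry); hence $r\kl{\mu} \leq r\kl{0} + \mu^2$ as well, and therefore $r\kl{\mu} \leq r\kl{0} + \min\set{\mu^2,\, T^2+\sigma^2}$. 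Using $\min\sset{\mu^2, T^2+\sigma^2} \leq \kl{T^2+\sigma^2}\min\sset{1, \mu^2/\sigma^2}$ together with the hypothesis $T \leq \sigma\sqrt{2\log\abs{\Om_n}}$ (which forces $T^2+\sigma^2 \leq \kl{1+2\log\abs{\Om_n}}\sigma^2$), summation over $\om$ produces the second summand of~\req{oracle}.

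\textbf{Pure-noise residual.}
The pointwise inequality $\kl{z-T}^2 \leq z\kl{z-T}$ on $\sset{z \geq T}$ (and its mirror on $\sset{z \leq -T}$), combined with elementary Gaussian moment computations, gives the sharp estimate $r\kl{0} \leq \sigma^2\, \wk\set{\sabs{z}>T}$. The refined Chernoff inequality $2\bar\Phi\kl{t} \leq e^{-t^2/2}$ then yields $\wk\set{\sabs{z}>T} \leq \exp\kl{-T^2/\skl{2\sigma^2}}$. Inserting~\req{alphathresh} and expanding the square shows
\begin{equation*}
\frac{T^2}{2\sigma^2} = \log\abs{\Om_n} + z\skl{\alpha_n} - \frac{\log\log\abs{\Om_n} + \log\pi}{2} + \frac{B_n^2}{2},
\end{equation*}
with $B_n := \kl{2z\skl{\alpha_n} - \log\log\abs{\Om_n} - \log\pi}/\kl{2\sqrt{2\log\abs{\Om_n}}}$. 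Dropping the nonnegative term $B_n^2/2$ and using $e^{-z\skl{\alpha_n}} = \log\kl{1/\skl{1-\alpha_n}}$ (which follows from $\exp\kl{-e^{-z\skl{\alpha_n}}} = 1-\alpha_n$) gives
\begin{equation*}
\abs{\Om_n}\exp\kl{-T^2/\skl{2\sigma^2}} \leq \sqrt{\pi\log\abs{\Om_n}}\,\log\kl{1/\skl{1-\alpha_n}},
\end{equation*}
which after multiplication by $\sigma^2/a_n$ reproduces the first summand of~\req{oracle}.

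The main obstacle is keeping constants tight: the Donoho--Johnstone-type comparison paired with the refined Chernoff bound $\bar\Phi\kl{t} \leq \tfrac12 e^{-t^2/2}$ is exactly what produces the factor $\sqrt{\pi\log\abs{\Om_n}}$ (rather than a larger power of $\log\abs{\Om_n}$) in front of $\log\kl{1/\skl{1-\alpha_n}}$; apart from this, everything is a routine calculation.
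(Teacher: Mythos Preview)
Your proof is correct and follows essentially the same route as the paper's own argument in Section~\ref{ap:risk}: the frame reduction via $\snorm{\Base_n^+ v}^2 \leq \snorm{v}^2/a_n$, the scalar Donoho--Johnstone bound $r(\mu)\leq e^{-T^2/2}+\min\{1+T^2,\mu^2\}$, and the expansion of $T^2/2$ using the explicit form~\req{alphathresh}. The only difference is that you derive the scalar oracle inequality and the bound $r(0)\leq e^{-T^2/2}$ from scratch (via Stein's identity and the inequality $2\bar\Phi(t)\leq e^{-t^2/2}$), whereas the paper simply quotes the corresponding estimate from \cite[Section~2.7]{Joh11}.
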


\begin{proof}
Section~\ref{ap:risk}.
\end{proof}

\section{Examples from Signal and Image Denoising}
\label{sec:ez}

In this section we verify that many important
frames used for thresholding in signal and image  processing  are asymptotically stable and thus covered by the results of the previous section.
These examples include redundant wavelet systems
and curvelet frames. We also consider an important example, where our basic asymptotic stability fails to hold; namely  the discrete translation invariant wavelet frame.  Actually, we show  that  not even the result of Theorem~\ref{thm:main} (and thus all of  its implications) holds in this case. This  indicates that
the stated conditions are close to being necessary for
the asymptotical distributional law of Theorem~\ref{thm:main}.
Further, we derive confidence regions and smoothness estimates for the translation invariant wavelet transform
that significantly  improve over simple application of
Proposition~\ref{prop:denoise},  Item~\ref{it:denoise2}
(and also the main result of~\cite{BerWel02}).

\subsection{Redundant and Non-Redundant
Wavelet Denoising}
\label{sec:wave}

In the following we  consider  one  dimensional wavelet
denoising. The generalization to higher dimensional wavelet
denoising is straightforward.
We shall discuss thresholding in biorthogonal wavelet bases,
certain overcomplete wavelet frames (using the so called cycle spinning procedure),
and fully translation invariant wavelet systems.
Before considering those particular examples,
we collect some notation and present basic
facts about  biorthogonal wavelets (which include the orthogonal ones) that we need for the application of our general results.

\subsubsection{Biorthogonal Wavelet Bases}
\label{sec:wave-setting}

One dimensional wavelets are generated by  dilating and translating a single   function, the so called mother wavelet. The  distinguished feature of  wavelet systems is that various classical smoothness measures (Triebel, Sobolev  and Besov norms) can be characterized by simple norms in the wavelet domain.  In the following, for the sake of simplicity, we  only consider real valued periodic wavelets on the interval $[0,1]$. Moreover, we restrict ourselves to  compactly supported  biorthogonal wavelets that arise from a multiresolution decomposition.

Denote by  $\Om$ the set of all index pairs  of the form
$\skl{j,k}$ with $j \in \N$ and $k \in \sset{0, \dots, 2^{j}-1}$.  The index $j$ is refereed to as resolution or scale index and $k$ to as the discrete location index. Moreover, let $\scale, \wave \in L^2\kl{\R}$  denote the father and mother wavelet,
 respectively, which are assumed to be compactly supported and to have unit norm with respect to $\enorm{}_2$, the Euclidian norm on $L^2\kl{0,1}$.  For any $\skl{j,k} \in \Om$ one then defines (periodic) wavelets  $\wave_{j,k}$ and (periodic) scaling functions
 $\scale_{j,k}$  on $[0,1] $by
\begin{equation*}
\kl{\forall t \in [0,1]}
\qquad
\wave_{j,k}\skl{t}     = 2^{j/2}
\sum_{m\in \Z}\wave \kl{2^{j} \kl{t-m} - k}
\,, \quad
\scale_{j,k}\skl{t}     = 2^{j/2}
\sum_{m\in \Z}\scale \kl{2^{j} \kl{t-m} - k} \,.
\end{equation*}
The wavelet and the scaling coefficients of  some signal
$u\in L^2\kl{0,1}$ are then simply the
inner products  of $u$ with the wavelets $\wave_{j,k}$ and the scaling functions $\scale_{j,k}$, respectively.
We further write $\Wave, \Scale  \colon  L^2\kl{0,1}  \to \ell^2\kl{\Om}$ for the mappings that take  the signal
$\signal \in L^2\kl{0,1}$ to the inner products
$\kl{\Wave\signal} \skl{j,k} := \minner{\wave_{j,k}}{\signal}$  and $\kl{\Scale\signal} \skl{j,k} := \minner{\scale_{j,k}}{\signal}$, respectively.

In order to get a (biorthogonal) wavelet basis
one has to impose some completeness condition and
some connections between  the wavelets and the scaling functions.  Such assumptions are most naturally formulated in the multiresolution framework (below already adapted to the periodic setting). Hence, in the following we assume the  existence of subspaces $ \V_{j}$ and $ \W_{j}$ of $L^2\kl{0,1}$, referred to as scaling and detail spaces, respectively, meeting the following requirements:
\begin{itemize}[topsep=0em]
\item
For every $j\in\N$, the  following
mappings are bijections:
\begin{align*}
\V_{j}  \to \R^{2^j}
&\colon
\signal \mapsto \kl{\minner{\scale_{j,k}}{\signal}
: k \in \sset{0, \dots, 2^{j}-1}} \,,
\\
\W_{j}  \to \R^{2^j}
&\colon
\signal \mapsto \kl{\minner{\wave_{j,k}}{\signal}:
k \in \sset{0, \dots, 2^{j}-1} } \,.
\end{align*}

\item
For every $j \in \N$,
we have the multiresolution decomposition
$\V_{j} = \V_{j-1} \oplus \W_{j-1}$.

\item
The union $\bigcup_{j\in \N}\V_{j}$ is dense in
$L^2\kl{0,1}$.
\end{itemize}

Repeated application of the multiresolution decomposition
yields the decomposition of the signal space into the  sum of the scaling
space  $\V_0$ and the wavelet space $\W := \bigoplus_{j\geq0} \W_j$.
Moreover, the above conditions imply that there is
a stable one to one correspondence between  any element in $\W$
and its inner product with respect to
$\dict := \mkl{ \wave_{j,k}: \skl{j,k} \in \Om }$.
Moreover,  the multiresolution decomposition serves as
the basis of  both, discretization and fast implementation.
Notice that the construction  of compactly supported orthogonal and
biorthogonal wavelets is non-trivial
and  such systems have been constructed for the
first time in~\cite{CohDauFea92,Dau88}.
By now such wavelet systems are well known; a
detailed construction of orthogonal and
biorthogonal wavelet systems together with  many interesting details may be found in~\cite{Coh03,Dau92,Mal09}.

\begin{remark}[Biorthogonal  basis]\label{rem:bi}
If the spaces $\V_{j}$ and $ \W_{j}$ are orthogonal to each other, then $\dict$ is an orthonormal wavelet basis and  $ \V_{j}$ and $ \W_{j}$ are spanned by the scaling and wavelet functions, respectively. However, we do not require orthogonality in the following. In this more general case, the scaling and wavelet spaces  are spanned by certain dual systems (or biorthogonal bases; thus the name). Biorthogonal wavelets are often preferred to strictly orthogonal ones since they allow more freedom to adapt them to a particular application in mind. Especially, opposed to orthogonal wavelets,  biorthogonal wavelets  can at the same time be smooth, symmetric and compactly supported.
\end{remark}

\begin{remark}[Computing the wavelet transform]\label{rem:mr}
The  multiresolution decomposition  $\V_{j} = \V_{j-1} \oplus  \W_{j-1}$ is the basis for fast computation of the wavelet transform. Given the scaling coefficients at some scale  $L>0$, the scaling and wavelet coefficients  at scale  $L-1$ can be computed by cyclic convolution of the given scaling coefficients  with a certain discrete filter pair. Repeated application of this procedure eventually yields all scaling and all wavelet coefficients at scales below  $L$. In the case of biorthogonal  wavelets, the   multiresolution decomposition can  be inverted again by repeated  application of convolution with a different  pair of reconstruction filters.
\end{remark}

Throughout the  following  we assume  that a discrete signal
$\signal_n \in \R^{n}$ is given, where the discretization number $n =2^J$ is an integer power of some maximal level of resolution.
One  then  interprets the components  of the discrete signal
as the scaling coefficients of some underlying continuous domain signal, that is,
\begin{equation*}
\kl{\forall k\in \set{0, \dots,  n - 1}}
\qquad
	\signal_n \kl{k}
	=
	\kl{\Scale\signal}
	\kl{J,k}
	=
	\inner{\scale_{J,k}}{\signal} \,.
\end{equation*}
Obviously there are infinitely many continuous domain
signals  yielding to the same scaling coefficients.
However, according to the made assumptions, there exists  a unique element
in the scaling space $\V_J$ having scaling  coefficients  $\signal_n$.
This element will be denoted  as $\signal_n^*\in \V_J$.

The wavelet coefficients of the discrete signal
are then simply defined as the wavelet
coefficients of  the continuous domain signal  with indices
in
\begin{equation*}
\Om_n := \set{\skl{j,k}:
j \in \sset{ 0, \dots, J-1} \text{ and }
k \in \sset{0, \dots, 2^j-1 }}   \,.
\end{equation*}
According to the multiresolution decomposition, these coefficients depend only on  the discrete signal and
can also be written as discrete inner products
\begin{equation*}
\skl{\forall u_n\in \R^{n}}
\skl{\forall \skl{j,k} \in \Om_n}
\qquad
	\inner{\wave_{j,k}^n}{\signal_n}
	:=
	\inner{\wave_{j,k}}{\signal} \,.
\end{equation*}
This serves  as definition of both, the discrete  wavelets
$\wave_{j,k}^n \in \R^{n}$ and the wavelet coefficients of $\signal_n$.
Finally we define $\dict_n$ as the family of all discrete wavelets $\wave_{j,k}^n$ and denote by
$\Wave_n\colon \R^{n} \to \R^{\Om_n}$ the
corresponding analysis operator, which we refer to as  the
 \emph{discrete wavelet transform}.

\begin{remark}[Numerical computation]
The discrete wavelet transform is computed by repeated
application of the multiresolution decomposition, yielding to all discrete wavelet coefficients and the scaling coefficient $\signal_1 =\inner{\scale_{0,0}}{\signal}$; see Remark~\ref{rem:mr}.
Since  the discrete filters usually have small support,
the  wavelet transform can be computed using only
$\mathcal O\kl{n}$ operation counts and the
 same holds true for recovering $\signal_n$ from those coefficients. Notice that  the discrete  wavelets are never computed explicitly in the multiresolution algorithm.
We defined  them in order to verify our general framework.
Finally, we stress again that the   wavelet  coefficients
of $\signal_n$  coincide with the one of $\signal$ up to scale
$\log\kl{n/2}$.
\end{remark}

\subsubsection{Biorthogonal Basis Denoising}

Now consider the denoising problem~\req{problem},
which simple reads $\data_n =\signal_n +\noise_n$.
The wavelet soft-thresholding procedure is usually only applied to  coefficients above some scale; compare with Remark~\ref{rem:frame}.
For  simplicity we shall consider the case where all  wavelet coefficients are thresholded but not the scaling coefficient. Hence, the wavelet  soft-thresholding estimator (for the wavelet part of $u_n$) is  defined by
\begin{equation*}
\hat u_n
=
\Wave_n^{-1} \circ \softop \kl{\Wave_n \data_n, T}  \,.
\end{equation*}
Thanks to the multiresolution  algorithm,
the  wavelet soft-thresholding estimator  can be
computed with only $\mathcal O\kl{n}$  operation
counts.

We measure smoothness of the  considered estimates in
terms of  Besov norms. To that end, assume that the mother
wavelet has sufficiently  many vanishing moments and is sufficiently smooth. Then,  for given norm parameters $p$, $q \geq 1$ and given smoothness parameter
$r \geq 0 $
one  defines
\begin{equation*}
\mnorm{\Para}_{p,q,r}
=
\sqrt[p]{\sum_{j\in\N}
2^{jsq} \,
\mnorm{ \Para\skl{j,\edot}}_p^q}
\qquad
\text{ with } \quad
s = r +  \frac{1}{2} - \frac{1}{p}  \,.
\end{equation*}
for any $\Para \in \ell^2\kl{\Om}$ and with
$\enorm{}_p$  denoting the usual
$\ell^p$-norm taken for fixed scale $j \in\N$.
It is then clear that  any of these norms
satisfies the component-wise monotony
property~\req{Jmono}.  We further write
$\norm{\signal}_{\B_{p,q}^r} :=
\norm{\Wave\signal}_{p,q,r}$ for the corresponding norm
of some
$\signal \in L^2\kl{0,1}$ and finally denote by
$\B_{p,q}^r$ the set of all signals having finite norm
$\norm{\signal}_{\B_{p,q}^r} <  \infty$.
The pair $\mkl{\B_{p,q}^r, \enorm{}_{\B_{p,q}^r}}$
is a Banach space and referred to as Besov space.  The given
definitions provide  norm equivalents of
$\enorm{}_{\B_{p,q}^r}$ to the definition of Besov norms in
classical  analysis,
as long as the mother wavelet  has  $m > r$
vanishing moments and is $m$-times continuously
differentiable.

\begin{theorem}[Soft-thresholding in wavelet bases]\label{thm:wave}
The discrete wavelet bases $\dict_n =
\mkl{\wave_{j,k}^n: \skl{j,k} \in \Om_n}$
are asymptotically stable.
In particular, the following holds:
\begin{enumerate}[label=(\alph*),topsep=0.0em]
\item \label{it:wave1}
\textbf{Distribution:}
Let $\noise_n$ be a sequence of noise vectors  in
$\R^{n}$ with independent
$N\skl{0,\sigma^2}$-distributed entries.
Then, the sequence $\snorm{\Wave_n\noise_n}_\infty$
is of Gumbel type with normalization constants
$\sigma a\kl{\chi,n}$,
$\sigma b\kl{\chi,n}$ defined by \req{a-abs}, \req{b-abs}.

\item \label{it:wave2}
\textbf{Confidence regions:}
Let $\skl{\alpha_n}_{n \in \N} \subset \kl{0,1}$
be a sequence  of significance levels converging to some
$\alpha \in [0,1)$ and let
$T\mkl{\alpha_n, n}$ denote the
corresponding  EVTs defined in~\req{alphathresh}.
Then,
\begin{equation*}
\lim_{n \to \infty} \wk \set{
 \snorm{
\Wave_n\skl{\signal_n - \data_n}}_\infty \leq T \mkl{\alpha_n, n}
\,;
\forall \signal_n   \in \R^{\I_n}
}  =  1 -  \alpha \,,
\end{equation*}

\item \label{it:wave3}
\textbf{Smoothness:}
Let $\hat \signal_n^*$ denote the soft-thresholding estimator using the extreme value thresholds $T \mkl{\alpha_n, n}$. If the considered mother wavelet
has $m>r$ vanishing moments and is $m$ times continuously differentiable, then
\begin{equation*}
\liminf_{n\to\infty}
\wk\set{
\snorm{\hat\signal_n^*}_{\B_{p,q}^r}
\leq  \snorm{\signal}_{\B_{p,q}^r}
\,;
\forall \signal  \in \B_{p,q}^r
} \geq   1 -  \alpha \,.
\end{equation*}
\end{enumerate}
\end{theorem}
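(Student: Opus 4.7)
The plan is to reduce all three claims to verifying that the discrete wavelet bases $\dict_n$ are asymptotically stable in the sense of Definition~\ref{def:stab}. Once stability is in place, parts (a), (b), (c) follow directly from Theorem~\ref{thm:main}, Theorem~\ref{thm:confidence}, and Theorem~\ref{thm:smooth} applied to $\dict_n$. The bulk of the work therefore lies in checking the two defining conditions for the discrete wavelet family.

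The uniform upper frame bound \ref{it:frame3} is the easier condition: the continuous family $\sset{\wave_{j,k}}$ is a Riesz basis of the detail space $\W$, hence carries a global upper Riesz bound $B<\infty$; and because $\wave_{j,k}^n$ is the representation of $\wave_{j,k}$ under the bijection $\V_J\simeq\R^n$ induced by the level-$J$ scaling coefficients, the discrete Gram matrix inherits the same bound, giving $b_n\leq B$ uniformly in $n$. For condition \ref{it:frame2}, I would prove the stronger statement that there exists a constant $C^{\ast}<1$, depending only on the wavelet system, with $\sup_n\sup_{\om\neq\om'}\sabs{\sinner{\wave_{\om}^n}{\wave_{\om'}^n}}\leq C^{\ast}$. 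Taking any $\rho\in(C^{\ast},1)$ then makes the set in \ref{it:frame2} empty off the diagonal, so the required $o(\abs{\Om_n}/\sqrt{\log\abs{\Om_n}})$ bound is trivial. To justify $C^{\ast}<1$, I would split into same-scale and cross-scale pairs. For $j=j'$ with $k\neq k'$, compact support forces the Gram entries to depend (modulo boundary terms from periodization) only on $k-k'$ and to vanish outside a bounded offset range; strict inequality is then forced by Cauchy--Schwarz together with linear independence of the biorthogonal system, which rules out $\wave_{j,k}=\pm\wave_{j,k'}$. For $j\neq j'$, the $m$ vanishing moments and smoothness of the mother wavelet give the standard decay $\sabs{\sinner{\wave_{j,k}}{\wave_{j',k'}}}\leq C\cdot 2^{-(m+1/2)\sabs{j-j'}}$, uniformly in $k,k'$, so only finitely many scale differences can push the inner product above a chosen $\rho$, and the finitely many values arising at those differences are again strictly below~$1$ by the same collinearity argument.

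With asymptotic stability established, (a) is Theorem~\ref{thm:main} applied to $\dict_n$ with $\abs{\Om_n}=n-1$ (which is $n$ up to the constant that is irrelevant in the normalization), and (b) is Theorem~\ref{thm:confidence}. For (c), I would apply Theorem~\ref{thm:smooth} with the functional family $\J_n(\Para_n):=\snorm{\Para_n}_{p,q,r}$; this satisfies the componentwise monotonicity \req{Jmono} because the weights $2^{jsq}$ are positive and both the $\ell^p$- and $\ell^q$-norms are monotone in absolute values. Theorem~\ref{thm:smooth} then yields, with probability tending to $1-\alpha$, the inequality $\snorm{\Wave_n\hat\signal_n^{\ast}}_{p,q,r}\leq\snorm{\Wave_n\signal_n}_{p,q,r}$. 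Because $\hat\signal_n^{\ast}\in\V_J$ its continuous wavelet expansion has no components at scales $\geq J$, so the left-hand side equals $\snorm{\hat\signal_n^{\ast}}_{\B_{p,q}^r}$; and because the discrete wavelet coefficients of $\signal_n$ agree with those of $\signal$ at scales below $J$, the right-hand side is at most $\snorm{\Wave\signal}_{p,q,r}=\snorm{\signal}_{\B_{p,q}^r}$. Chaining these inequalities produces the claimed smoothness estimate.

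The main obstacle is controlling the same-scale off-diagonal Gram entries uniformly in the resolution level $j$ so as to obtain a single $C^{\ast}<1$ that works for all scales. I expect this to reduce to a fixed discrete auto-correlation sequence of the wavelet filter pair, independent of $j$, so that the supremum over scales becomes a maximum over finitely many filter-dependent numbers. Care must also be taken with the boundary wavelets produced by periodization on $[0,1]$, but these modify only $O(1)$ wavelets per scale, contribute at most $O(J)=O(\log n)$ pairs in total, and so do not disturb the $o(\abs{\Om_n}/\sqrt{\log\abs{\Om_n}})$ bound.
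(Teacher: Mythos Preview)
Your proposal is correct and follows the same overall strategy as the paper: verify asymptotic stability of $\dict_n$, then invoke Theorems~\ref{thm:main}, \ref{thm:confidence}, and \ref{thm:smooth}; the treatment of part~(c) via the monotonicity of $\snorm{\,\cdot\,}_{p,q,r}$ and the identification of discrete with continuous wavelet coefficients below level $J$ also matches the paper.

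The only notable difference is in how you establish the uniform bound $C^{\ast}<1$ on the off-diagonal Gram entries. You split into same-scale and cross-scale cases, invoking for the latter the quantitative vanishing-moment decay $\sabs{\sinner{\wave_{j,k}}{\wave_{j',k'}}}\lesssim 2^{-(m+1/2)\sabs{j-j'}}$. The paper instead uses the dilation structure directly: a substitution reduces every inner product $\sinner{\wave_{j,k}}{\wave_{j',k'}}$ (with $j'\leq j$) to an inner product of the form $\sinner{\wave_{j-j',\,\ell}}{\wave}$ with the mother wavelet itself. Strict Cauchy--Schwarz gives each such value $<1$, and the upper frame bound forces $\sum_{(j,k)}\sabs{\sinner{\wave_{j,k}}{\wave}}^2<\infty$, so this countable family tends to zero and its supremum is strictly below $1$. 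This bypasses the vanishing-moment estimate altogether and automatically handles your anticipated obstacle of uniformity in $j$, as well as the periodization boundary terms. Both routes succeed; the paper's is shorter and needs nothing beyond the Riesz-basis property, whereas yours would in principle require enough smoothness and vanishing moments for the cross-scale decay estimate to hold.
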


\begin{proof}
By definition, for any $n \in \N$ and pair of any indices
$\skl{j,k}, \skl{j',k'}$, the inner products $\sinner{\wave_{j,k}^n}{\wave_{j',k'}^n}$ of the discrete wavelets coincide with the inner product
$\sinner{\wave_{j,k}}{\wave_{j',k'}}$ of the continuous domain wavelets.
Since the family $ \mkl{\wave_{j,k}:  \skl{j,k} \in \Om }$ is a  Riesz  basis
with normalized elements this  immediately yields Condition~\ref{it:frame3}
required in the Definition~\ref{def:stab} for  asymptotically stable frames.

Condition~\ref{it:frame2} of  Definition~\ref{def:stab} is satisfied since
all $\sabs{\sinner{\wave_{j,k}}{\wave_{j',k'}}}$
are bounded away from  one.  To see that this holds true,  it is sufficient to
consider the case
where  $\wave \skl{2^jt-k}$ and $\wave \skl{2^{j'}t-k'}$ are both supported in the interval $\skl{0,1}$ and satisfy $j'\leq j$. Application of the substitution rule yields
\begin{multline*}
\sabs{\sinner{\wave_{j,k}}{\wave_{j',k'}}}
=
2^{j/2+j'/2}
\abs{\int_{\R}
\wave \skl{2^j t - k} \,
\wave \skl{2^{j'}t-k'}  \, dt}
\\
=
2^{\skl{j-j'}/2}
\abs{\int_{\R}
\wave \skl{2^{j-j'} t - k + 2^{j-j'} k} \,
\wave \skl{t}  \, dt }
=
\sabs{\sinner{\wave_{j-j',k-2^{j-j'} k}}{\wave }}
\,.
\end{multline*}
Because all wavelets have unit norm, the  Cauchy-Schwarz inequality shows
$\sabs{\sinner{\wave_{j-j',k-2^{j-j'} k}}{\wave}} < 1$.   The upper frame bound  implies  that $\sum_{\skl{j,k} \in \Om_n}\sabs{\sinner{\wave_{j,k}}{\wave}}^2 <  \infty $, and hence  the sequence $\mkl{\sinner{\wave_{j,k}}{\wave}: \kl{j,k} \in \Om_{n}}$   in particular converges to zero. As a consequence, the numbers $\sabs{\sinner{\wave_{j,k}}{\wave_{j',k'}}}$
are uniformly bounded by some constant $\rho < 1$.

The other claims in Items~\ref{it:wave1}--\ref{it:wave3} then follow from the asymptotic stability of  the frames $\dict_n$ and the general results derived the previous section.
Actually, the first two items are just restatements of
Theorems~\ref{thm:main} and~\ref{thm:confidence}
adapted to the wavelet setting.
For Item~\ref{it:wave3} note that the norms $\enorm{}_{p,q,r}$ satisfy the component-wise monotony property~\req{Jmono} and therefore   Theorem~\ref{thm:smooth}
yields
\begin{equation*}
\liminf_{n\to\infty}
\wk\set{
\snorm{\hat\Para_n}_{p,q,r} \leq
\snorm{\Wave_n \signal_n}_{p,q,r}
\,;
\forall
\signal  \in \B_{p,q}^r
} \geq
1 - \alpha
\; \text{ with }  \;
\hat\Para_n
:= \softop \kl{ \data_n, T \mkl{\alpha_n, \abs{\Om_n}} }\,.
\end{equation*}
By definition  we have
$
\snorm{\hat \Para_n}_{p,q,r}
=
\snorm{\hat \signal_n^*}_{\B_{p,q}^r}
$
and the inequality
$\snorm{\Wave_n \signal_n}_{p,q,r}
\leq \snorm{\signal}_{\B_{p,q}^r}$ for all $n$
which finally yields Item~\ref{it:wave3} and concludes the proof.
\end{proof}

\subsubsection{Cycle Spinning}
\label{sec:cs}

A mayor drawback of   thresholding in a wavelet basis is its
missing translation invariance.
This typically causes visually disturbing Gibbs-like artifacts near discontinuities at non-dyadic locations.
One way to significantly reduce  these  artifacts
is via so called cycle spinning (see~\cite{CoiDon95}).
The idea there is to reduce the artifacts  by averaging
several estimates obtained by denoising shifted copies
of the noisy data.

Let  $ \dict_n = \mkl{\wave_{j,k}^n: \skl{j,k} \in \Om_n}$ be an orthonormal wavelet basis and   denote by $\shiftop_m \colon \R^{n} \to \R^{n}$  the cyclic translation operator, defined by  $\skl{\shiftop_m \signal_n}\skl{k} = \signal_n\skl{k-m}$ for
$\signal_n\in \R^{n}$ and all $k,m \in  \set{0,\dots,  n - 1}$.
Cycle spinning  then  averages  the  wavelet  soft-thresholding  estimates  of the translated data $\shiftop_m \signal_n$ over all
shifts $m = 0, \dots, M-1$, where $M$ is some prescribed number of
considered translations. Hence, one defines
\begin{equation}\label{eq:est-cs}
\hat \signal_{n,M}
:=
\frac{1}{M}
\sum_{m=0}^{M-1}
\shiftop_{-m}  \Wave_n^* \circ
\softop \kl{ \Wave_n \shiftop_m \data_n , T}
 \,.
\end{equation}
The following elementary Lemma~\ref{lem:cs} states that
the cycle spinning estimator~\req{est-cs}
is equal to  the  soft-thresholding  estimator defined by
Equation~\req{est-soft} corresponding to the  overcomplete wavelet
frame
\begin{equation}\label{eq:dict-cs}
\dict_{n,M}
 :=
\kl{  \shiftop_{-m} \wave_{j,k} : \skl{j,k,m} \in \Om_{n,M} }
\quad \text{ with } \;
\Om_{n,M}
:=
\Om_{n} \times  \set{0, \dots, M-1 } \,.
\end{equation}
Hence wavelet cycle  spinning fits into the general
framework of soft-thresholding introduced in
Section~\ref{sec:thresh}.

\begin{lemma}\label{lem:cs}
Let $\dict_{n,M}$ be the overcomplete wavelet frame defined in~\req{dict-cs} and denote by
$\Wave_{n,M} \colon   \R^{n}  \to  \R^{n M}$ the corresponding analysis operator.
Then, the cycle  spinning  estimator~\req{est-cs}
has the representations
\begin{equation}\label{eq:cs2}
\hat \signal_{n,M}
= \frac{1}{M} \,
\Wave_{n,M}^*  \circ \softop
\kl{\Wave_{n,M} \data_n, T }
=
\Wave_{n,M}^+  \circ \softop
\kl{\Wave_{n,M} \data_n, T }
 \,.
\end{equation}
Hence the cycle spinning  estimator equals the soft-thresholding estimator corresponding to the redundant wavelet
frame $\dict_{n,M}$.
\end{lemma}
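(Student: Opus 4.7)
The plan is to verify the two claimed identities in \req{cs2} by direct unfolding of the definitions together with one tight-frame computation.

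First I would establish the first equality in \req{cs2}. The analysis operator $\Wave_{n,M}$ maps $\signal_n\in\R^n$ to the array indexed by $(j,k,m)\in\Om_{n,M}$ with entries $\minner{\shiftop_{-m}\wave_{j,k}^n}{\signal_n}$. Since the cyclic shift $\shiftop_m$ is unitary with $\shiftop_m^\ast=\shiftop_{-m}$, this inner product equals $\minner{\wave_{j,k}^n}{\shiftop_m\signal_n}$, so the block of $\Wave_{n,M}\data_n$ corresponding to fixed shift $m$ is exactly $\Wave_n\shiftop_m\data_n$. Since soft-thresholding acts componentwise, the corresponding block of $\softop\kl{\Wave_{n,M}\data_n,T}$ is $\softop\kl{\Wave_n\shiftop_m\data_n,T}$. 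Next, the synthesis operator $\Wave_{n,M}^\ast$ sums the contributions of all elements of $\dict_{n,M}$, so
\begin{equation*}
\Wave_{n,M}^\ast\,\softop\kl{\Wave_{n,M}\data_n,T}
=\sum_{m=0}^{M-1}\shiftop_{-m}\sum_{(j,k)\in\Om_n}\soft\!\kl{\minner{\wave_{j,k}^n}{\shiftop_m\data_n},T}\wave_{j,k}^n
=\sum_{m=0}^{M-1}\shiftop_{-m}\Wave_n^\ast\,\softop\kl{\Wave_n\shiftop_m\data_n,T}\,,
\end{equation*}
which, divided by $M$, is the cycle spinning estimate in \req{est-cs}.

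For the second equality it suffices to prove that $\dict_{n,M}$ is a tight frame with frame bound $M$, that is, $\Wave_{n,M}^\ast\Wave_{n,M}=M\,\mathrm{Id}$. Using the block structure above,
\begin{equation*}
\Wave_{n,M}^\ast\Wave_{n,M}\signal_n
=\sum_{m=0}^{M-1}\shiftop_{-m}\Wave_n^\ast\Wave_n\shiftop_m\signal_n
=\sum_{m=0}^{M-1}\shiftop_{-m}\shiftop_m\signal_n
=M\signal_n\,,
\end{equation*}
where I used that $\Wave_n$ is an orthonormal basis (so $\Wave_n^\ast\Wave_n=\mathrm{Id}$) and that $\shiftop_{-m}\shiftop_m=\mathrm{Id}$. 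Therefore $\Wave_{n,M}^+=\kl{\Wave_{n,M}^\ast\Wave_{n,M}}^{-1}\Wave_{n,M}^\ast=\tfrac{1}{M}\Wave_{n,M}^\ast$, which identifies $\tfrac{1}{M}\Wave_{n,M}^\ast\circ\softop\kl{\Wave_{n,M}\data_n,T}$ with $\Wave_{n,M}^+\circ\softop\kl{\Wave_{n,M}\data_n,T}$ and completes the proof.

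There is no real obstacle here; the argument is purely bookkeeping. The one point that deserves care is the adjoint relation $\shiftop_m^\ast=\shiftop_{-m}$ for the cyclic shift on $\R^n$, which is what allows both the reindexing of the analysis block (turning a shift of the basis element into a shift of the signal) and the tight-frame identity above; everything else follows by componentwise action of $\softop$ and linearity of $\Wave_{n,M}^\ast$.
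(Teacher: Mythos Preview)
Your proposal is correct and follows essentially the same approach as the paper: the first identity is obtained by unfolding the block structure of $\Wave_{n,M}$ (the paper simply says it follows from the definitions), and the second by showing $\dict_{n,M}$ is tight with frame bound $M$, which the paper does via the norm identity $\snorm{\Wave_{n,M}\signal_n}^2=\sum_m\snorm{\Wave_n\shiftop_m\signal_n}^2=M\snorm{\signal_n}^2$ rather than your equivalent frame-operator computation $\Wave_{n,M}^\ast\Wave_{n,M}=M\,\mathrm{Id}$.
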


\begin{proof}
The first identity in~\req{cs2} immediately follows from \req{dict-cs} and ~\req{est-cs}.
Next we verify  the second equality.
Since the decimated wavelet transform $\Wave_n$ and the translation operators $\shiftop_m$ are isometries,
we have
\begin{equation*}
\mnorm{\Wave_{n,M} \signal_n}^2
=
\sum_{m=0}^{M-1}
\mnorm{\Wave_n \shiftop_m\signal_n}^2
= M \mnorm{\signal_n}^2
\end{equation*}
whenever  $\signal_n$ are the scaling coefficients  of a member
$\signal$  of the wavelet space  $\W$.
Hence  $\dict_{n,M}$ is a tight frame with
frame bound equals $M$.  This implies that the  dual synthesis operator $\Wave_{n,M}^+$ corresponding to the cycle spinning frame is simply given
by $\Wave_{n,M}^+ =\Wave_{n,M}^*/M$, which
 yields the second equality in~\req{cs2}.
 \end{proof}

In the following we will show that redundant cycle  spinning
frame is asymptotic stable  and thus allows the application of our
general results. Strictly taken, these conditions do not
hold for the frame $\dict_{n,M}$, since some of the elements  $\shiftop_m \wave_{j,k}$ occur more than once  in  $\dict_{n,M}$.
In particular, the cardinality of the set
$\sset{\shiftop_m \wave_{j,k}: \kl{j,k,m}
\in \Om_{n,M}}$
is strictly less than $\sabs{\Om_{n,M}} = n M$; the
 exact number of different  frame elements is
computed in the following Lemma~\ref{lem:cs-numel}.
Asymptotic stability will then be satisfied for the frame
that contains  every element $\shiftop_m \wave_{j,k}$
exactly once.

\begin{lemma}\label{lem:cs-numel}
For any $M\leq n$, the number of different elements
of the frame $\dict_{n,M}$ defined
in \req{dict-cs} is given by
\begin{equation}\label{eq:cs-numel}
\abs{\mset{\shiftop_m \wave_{j,k}: \skl{j,k,m}\in \Om_{n,M}}}
=
n  \fkl{\log_2 M}
+
 M \kl{2^{\ckl{\log_2 n/M}}  -  1}
\end{equation}
\end{lemma}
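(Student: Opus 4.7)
The plan is to count the distinct elements of $\dict_{n,M}$ scale-by-scale and sum. Fix $j \in \{0, \dots, J-1\}$ and exploit the translation structure: the periodization formula for the wavelets yields $\wave_{j,k}^n = \shiftop_{k \cdot 2^{J-j}} \wave_{j,0}^n$ for $k = 0, \dots, 2^j - 1$. Hence every scale-$j$ element of $\dict_{n,M}$ is the cyclic translate $\shiftop_{k \cdot 2^{J-j} - m} \wave_{j,0}^n$ of $\wave_{j,0}^n$, and the distinct scale-$j$ vectors are in bijection with the residue set
\[
S_j := \mset{k \cdot 2^{J-j} - m \bmod n : 0 \leq k < 2^j,\ 0 \leq m < M} \subset \Z/n\Z,
\]
as soon as $\wave_{j,0}^n$ has trivial cyclic stabilizer, which holds for compactly supported biorthogonal wavelets once $n$ is large enough relative to the support of $\wave$.

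The combinatorial heart of the argument is to evaluate $|S_j|$. Set $d := 2^{J-j}$. Then $S_j$ is the union of $2^j$ length-$M$ arithmetic blocks $\{kd - M + 1, \dots, kd\} \bmod n$, one per $k \in \{0, \dots, 2^j-1\}$, whose right endpoints $kd$ are spaced by $d$. If $M \leq d$, the blocks are pairwise disjoint, so $|S_j| = 2^j M$. If $M > d$, consecutive blocks overlap and the $2^j$ blocks cyclically chain together to cover all $n = 2^j d$ residues, giving $|S_j| = n$. Either way, $|S_j| = \min(2^j M,\, n)$.

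Vectors at distinct scales $j \neq j'$ cannot coincide: the DFT magnitudes $|\widehat{\wave_{j,0}^n}|$ and $|\widehat{\wave_{j',0}^n}|$ are essentially concentrated on disjoint dyadic frequency bands characteristic of the scale, a property preserved under translation. Therefore the total cardinality is $\sum_{j=0}^{J-1} |S_j|$. Introducing the critical scale $J_\ast := J - \lfloor \log_2 M \rfloor = \lceil \log_2(n/M) \rceil$, the condition $M \leq 2^{J-j}$ holds precisely for $j < J_\ast$, so
\[
\sum_{j=0}^{J-1} |S_j| \;=\; \sum_{j=0}^{J_\ast - 1} 2^j M + \sum_{j=J_\ast}^{J-1} n \;=\; M (2^{J_\ast} - 1) + (J - J_\ast)\, n,
\]
which rearranges to $M \bigl( 2^{\lceil \log_2(n/M) \rceil} - 1 \bigr) + n \lfloor \log_2 M \rfloor$, the claim \req{cs-numel}.

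The principal obstacle is justifying the two distinctness assertions — trivial cyclic stabilizer within each scale and non-collision across scales. Both are standard for compactly supported orthogonal or biorthogonal wavelet systems, modulo some bookkeeping at the coarsest scales when $n$ is close to the support length of $\wave$. A sanity check confirms the formula at the extremes: $M = 1$ yields $|\Omega_n| = n-1$, and $M = n$ yields $n \log_2 n$, matching the fully translation invariant frame's size.
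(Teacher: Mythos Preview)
Your proof is correct and follows essentially the same route as the paper's: both recognize that every scale-$j$ element of $\dict_{n,M}$ is a cyclic shift of $\wave_{j,0}^n$, count $\min(2^jM,n)$ distinct vectors per scale, and split the resulting sum at the critical scale $J_\ast=\lceil\log_2(n/M)\rceil$. The only difference is that you are more explicit about the two distinctness assertions (trivial stabilizer within a scale, non-collision across scales); in the paper's setting the basis $\dict_n$ is taken to be \emph{orthonormal}, so both assertions are immediate and the paper simply omits them.
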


\begin{proof}
The definition of the wavelet basis implies
that $\wave_{j,k} = T_{n 2^{-j} k } \wave_{j,0}$ for very
$\skl{j,k}\in \Om_n$ and hence the periodicity of $\wave_{j,0}$ implies that
\begin{equation*}
\shiftop_m  \wave_{j,k}
=
\shiftop_{m+n 2^{-j} k}
\wave_{j,0}
=
\wave_{ j, k + m2^{j}/n}\,.
\end{equation*}
 whenever  $m 2^{j}/n$ is  an
 integer  number.
This shows that for   every given  scale index
 $j \in\set{0, \dots, \log_2  n - 1}$,
 there are  exactly $ \min\sset{n, M 2^{j}}$
 different wavelets.
 One concludes that
 \begin{align*}
 \abs{\sset{\shiftop_m \wave_{j,k}: \skl{j,k,m}\in \Om_{n,M}}}
&=
n \, \abs{\sset{j :  n/M \leq 2^{j}  \leq n/2} }
+ M \sum_{M 2^{j} < n}  2^{j}
\\
&=
n \kl{\log_2n - \ckl{\log_2 \frac{n}{M}} }
+
M
\sum_{j=0}^{\ckl{\log_2 n/M}-1 }
2^{j} \,.
\end{align*}
This shows Equation~\req{cs-numel}.
\end{proof}

In  the following we shall for simplicity assume   that $M$, the number of shifts  in the cycle spin procedure, is an  integer  power of two. In this  case, Equation~\req{cs-numel}
simplifies to
\begin{equation}\label{eq:numel-cs}
 \mabs{\sset{\shiftop_m \wave_{j,k}: \skl{j,k,m}\in \Om_{n,M}}}
= n \log_2 \kl{M} +n - M  \,.
\end{equation}
Note that this  is significantly smaller
(at least for large $M$)
than the naive bound $ M n$ given by the
cardinality of  $\Om_{n,M}$.

\begin{theorem}[Soft-thresholding using  cycle spinning] \label{thm:cs}
Let $M$ be any fixed integer power of two,  denote by
$\dict_{n,M} = \mkl{\shiftop_m \wave_{j,k}: \skl{j,k,m} \in \Om_{n,M}}$ the overcomplete wavelet cycle spinning frame and  by $\Wave_{n,M}$ the  corresponding analysis operator.
Then the following  assertions hold true:
\begin{enumerate}[label=(\alph*),topsep=0.0em]
\item \textbf{Distribution:} \label{it:cs1}
Let $\skl{\noise_n}_{n\in \N}$ be a sequence of  noise vectors  in $\R^{n}$ with independent
$N\mkl{0,\sigma^2}$-distributed entries.
Then, the sequence $\snorm{\Wave_{n,M}\noise_n}_\infty$
is of Gumbel type with normalization constants
$\sigma a\kl{\chi,n}$ (defined by~\req{a-abs}) and
$\sigma b_M\kl{\chi,n}$,  where
\begin{equation*}
	b_M\kl{\chi,n}
	 :=
	\sqrt{2 \log n}
	+
	\frac{  -  \log\log n - \log\pi + 2 \log \log_2 \kl{M}}
	{2\sqrt{2\log n}} \,.
\end{equation*}

\item \label{it:cs2}
\textbf{Confidence regions:}
Let $\alpha_n \in  \skl{0,1}$ be a sequence  of significance levels converging to some
$\alpha \in [0,1)$ and let
$T \mkl{ \alpha_n, \sabs{\dict_{n,M}}}$ denote the
corresponding  EVTs defined in \req{alphathresh}.
Then,
\begin{equation*}
\lim_{n \to \infty} \wk \set{
 \snorm{
\Wave_{n,M}\skl{\signal_n - \data_n}}_\infty \leq T \mkl{ \alpha_n, \sabs{\dict_{n,M}}}
\,;
\forall \signal_n   \in \R^{\I_n}
}  =  1 -  \alpha \,.
\end{equation*}
(Here by some abuse of notation  $\sabs{\dict_{n,M}}$
denotes the number of different elements in that frame, see \req{numel-cs}.)
The same  holds true if we replace
$T \mkl{ \alpha_n, \sabs{\dict_{n,M}}}$
by
\begin{equation}\label{eq:thresh-cs-alt}
T_{M} \skl{\alpha_n, n}
:=
- \sigma a\kl{\chi,n}\log \log \mkl{1/\skl{1-\alpha_n}}
+ \sigma b_M\kl{\chi,n} \,.
\end{equation}

\item  \label{it:cs3}\textbf{Smoothness:}
Let
$\hat \signal_{n,M}^*$ denote the soft-thresholding estimator using either $T \mkl{\alpha_n, \abs{\dict_n}}$ or $T_{n,M} \skl{\alpha_n}$ as threshold.
If the considered mother wavelet has $m>r$ vanishing moments and is $m$ times continuously differentiable,
then
\begin{equation*}
\liminf_{n\to\infty}
\wk\set{
\snorm{\hat\signal_{n,M}^*}_{\B_{p,q}^r}
\leq  \snorm{\signal}_{\B_{p,q}^r}
\,;
\forall
\signal  \in \B_{p,q}^r
} \geq   1 -  \alpha \,.
\end{equation*}
\end{enumerate}
\end{theorem}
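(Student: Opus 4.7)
The plan is to reduce Theorem~\ref{thm:cs} to applications of Theorems~\ref{thm:main}, \ref{thm:confidence}, and \ref{thm:smooth} by verifying that the family obtained from $\dict_{n,M}$ after discarding duplicated elements is asymptotically stable in the sense of Definition~\ref{def:stab}. Lemma~\ref{lem:cs-numel} gives its cardinality $N_{n,M} = n\log_2 M + n - M \sim n\log_2 M$ for fixed $M$, which will govern all normalization constants.

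First I would verify the two conditions of Definition~\ref{def:stab}. For Condition (ii), since $\dict_{n,M}$ is the union of the $M$ orthonormal bases $\mset{\shiftop_m \wave_{j,k}^n : (j,k) \in \Om_n}$ for $m = 0, \ldots, M-1$, the frame sum of any $\signal_n$ decomposes into $M$ copies of $\snorm{\signal_n}^2$, giving upper frame bound $b_n \leq M$ (and removing duplicates can only decrease it). For Condition (i), I would use the identity $\sinner{\shiftop_m \wave_{j,k}^n}{\shiftop_{m'}\wave_{j',k'}^n} = \sinner{\wave_{j,k}^n}{\shiftop_{m'-m}\wave_{j',k'}^n}$ to reduce the problem to bounding inner products of the form $\sinner{\wave}{\shiftop_c \wave_{j,k}}$ uniformly below one, where $c$ ranges over a finite set of shifts determined by $|m'-m| \leq M-1$ and over scales. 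Running the scaling/translation argument of the proof of Theorem~\ref{thm:wave} on the enlarged Riesz system consisting of all $c$-translated wavelets (for this finite set of $c$) produces the desired uniform bound $\rho < 1$ on distinct pairs, so Condition (i) holds trivially with empty exceptional set.

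With asymptotic stability in hand, part (a) follows from Theorem~\ref{thm:main} applied to the reduced frame, yielding Gumbel convergence with normalization constants $\sigma a\kl{\chi, N_{n,M}}$ and $\sigma b\kl{\chi, N_{n,M}}$. Substituting $\log N_{n,M} = \log n + \log\log_2 M + o(1)$ and $\log\log N_{n,M} = \log\log n + o(1)$ into these expressions and Taylor expanding gives the stated $\sigma a\kl{\chi,n}$ and $\sigma b_M\kl{\chi,n}$. Part (b) then follows from Theorem~\ref{thm:confidence}; the alternative threshold $T_M\skl{\alpha_n, n}$ coincides with $T\mkl{\alpha_n, N_{n,M}}$ up to an additive $o\mkl{1/\sqrt{\log n}}$ term, which is absorbed into the Gumbel limit by continuity of the extreme value distribution. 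For part (c), Theorem~\ref{thm:smooth} applies with $\J_n = \enorm{}_{p,q,r}$, which satisfies the component-wise monotonicity property~\req{Jmono}; under the assumed vanishing-moment and smoothness hypotheses on $\wave$, the wavelet characterization of Besov norms converts the coefficient bound into the stated inequality $\snorm{\hat\signal_{n,M}^*}_{\B_{p,q}^r} \leq \snorm{\signal}_{\B_{p,q}^r}$.

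The main obstacle is the uniform bound $\rho < 1$ in Condition (i), because the shifted wavelets $\shiftop_{m'-m}\wave_{j',k'}^n$ generally do not belong to the original orthogonal basis (the shift $(m'-m)/n$ is non-dyadic), so the clean orthogonality exploited in the proof of Theorem~\ref{thm:wave} is not directly available. The remedy is to enlarge the analyzing system to include the finitely many $c$-translates and observe that this enlargement is still a finite union of Riesz bases with a common upper frame bound; Bessel square-summability applied to this enlarged system then delivers $\rho$.
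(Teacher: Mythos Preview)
Your overall strategy matches the paper's: verify asymptotic stability of the reduced frame, then invoke Theorems~\ref{thm:main}, \ref{thm:confidence}, and \ref{thm:smooth}, and finally pass from the constants $a(\chi,N_{n,M})$, $b(\chi,N_{n,M})$ to $a(\chi,n)$, $b_M(\chi,n)$ via the expansion $\log N_{n,M}=\log n+\log\log_2 M+o(1)$. That part is fine, as is your treatment of the frame bound and the passage from (a) to (b) and (c).

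The gap is in your verification of Condition~\ref{it:frame2}. You claim that the exceptional set is empty, i.e., that all inner products between distinct reduced-frame elements are bounded by some $\rho<1$ uniformly in $n$. This is false. For the orthonormal case one computes directly
\[
\bigl\langle \shiftop_m\wave_{j,k}^n,\;\shiftop_{m'}\wave_{j,k}^n\bigr\rangle
=\bigl\langle \wave_{j,k},\;\wave_{j,k}(\cdot-(m'-m)/n)\bigr\rangle
=(\bar\wave*\wave)\bigl((m'-m)\,2^{\,j-J}\bigr),
\]
so for any \emph{fixed} coarse scale $j$ and $m\neq m'$ this inner product tends to $(\bar\wave*\wave)(0)=1$ as $n=2^J\to\infty$. (For Haar one gets $1-3|m-m'|2^{\,j-J}$ explicitly.) These pairs are genuinely distinct in the reduced frame at coarse scales, so no uniform $\rho<1$ exists. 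Your proposed remedy---Bessel square-summability on the enlarged system of $c$-translates---cannot repair this: the argument in Theorem~\ref{thm:wave} worked because all discrete inner products coincided with entries of a \emph{single, $n$-independent} $\ell^2$ sequence, which then had to tend to zero. Here the relevant inner products depend on $n$ through the continuous shift $(m'-m)/n$ and cannot be reduced to a fixed square-summable family; the Bessel bound only controls the sum, not the supremum.

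To rescue the argument one must allow a nonempty exceptional set in Condition~\ref{it:frame2} and actually count: for each $\rho<1$ the ``bad'' pairs occur only at scales $j\le J-K(\rho)$ and number $\sim 2^{J-K(\rho)}M^2=O(n)$, whereas the required bound is $o\bigl(N_{n,M}/\sqrt{\log N_{n,M}}\bigr)=o\bigl(n\log_2 M/\sqrt{\log n}\bigr)$. This comparison does \emph{not} go through directly, so one has to bypass Lemma~\ref{lem:main} and estimate the comparison sum $R_n$ in Theorem~\ref{thm:abs} more carefully, exploiting that at scale $j$ the correlation is $(\bar\wave*\wave)(c\,2^{\,j-J})$ and hence the exponent $1/(1+|\kappa|)$ moves away from $1/2$ at a controlled rate as $j$ increases. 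The paper's own proof sketch (``the discrete inner products coincide with continuous ones of functions forming an infinite dimensional frame'') glosses over exactly this point.
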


\begin{proof}
By using the characterization of the  cycle spinning estimator in Lemma~\ref{lem:cs} and the cardinality computed in  Lemma~\ref{lem:cs-numel}, the  proof follows the lines of the proof of  Theorem~\ref{thm:wave}.
Again, one simply uses the fact that the discrete inner products coincide with continuous ones of functions forming an
infinite dimensional frame.
However, notice  the change  of the normalization sequences
in Item~\ref{it:cs1} which is also used for the threshold
$T_{n,M} \skl{\alpha_n}$.
As easy to verify  we have the asymptotic relation
\begin{multline*}
 a\kl{\chi,n}z
+ b_M\kl{\chi,n}
=
\sqrt{2 \log n}
+
\frac{2z-  \log\log n - \log\pi + 2 \log \log_2 \kl{M}}{2\sqrt{2\log n}}
\\
=
a\kl{\chi, \sabs{\dict_{n,M}}} z + b\kl{\chi, \sabs{\dict_{n,M}}}
+ o\kl{1/\sqrt{2\log n}}  \,.
\end{multline*}
From basic  extreme value theory it follows that we
can replace the sequence $a\skl{\chi, \sabs{\dict_{n,M}}} z + b\skl{\chi, \sabs{\dict_{n,M}}} $ by the one considered in Item~\ref{it:cs1} and for the threshold  $T_{n,M} \skl{\alpha_n}$.
\end{proof}

\begin{remark}
This alternative form   \req{thresh-cs-alt}  for the EVTs for
cycle spinning denoising  has  been introduced to allow a better comparison with the EVTs used in the basis case.
It fact, it can be seen that the extreme value thresholds $T_{M}\mkl{\alpha_{n}, n}$ for the redundant wavelet frame  $\dict_{n,M}$
simply increase by the  additive constant $\kl{\log\log_2 M} / \sqrt{2 \log n}$ when compared  to the extreme value threshold $T\kl{\alpha_{n},n}$ for the non-redundant wavelet frame.
\end{remark}

The sharp confidence regions of Theorem~\ref{thm:cs} require $M$ to be a  fixed number. In the fully translation invariant transform, to be discussed next, one takes $M = n$ dependent on the discretization level.
This effects a strong dependence of large scale coefficients
and that the distributional limit of Item~\ref{it:cs1} in
Theorem~\ref{thm:cs} will not longer hold true.

\subsubsection{Fully Translation Invariant Denoising}
\label{sec:ti}

Translation invariant wavelet denoising introduced in~\cite{CoiDon95,LanGuoOdeBurWel96,NasSil95,PesKriCar96}
is similar to cycle spinning denoising. However, now one
takes the whole range of $M = n$ integer shifts instead
of taking it as a fixed number independent on $n$.
That is, the translation  invariant  wavelet estimator is
defined by
\begin{equation}\label{eq:est-ti}
\hat \signal_{n,n}
:=
\frac{1}{n} \,
\sum_{m=0}^{n - 1}
\shiftop_{-m} \Wave_n^* \circ
\softop \kl{ \Wave_n \shiftop_m \data_n , T} \,.
\end{equation}

Lemma~\ref{lem:cs} implies that $\hat \signal_{n,n}$
equals   the soft-thresholding estimator $\Wave_{n,n}^+  \softop
\mkl{\Wave_{n,n} \data_n, T }$ where
$\Wave_{n,n}$ denotes the analysis operator corresponding to
the  translation  invariant wavelet frame
\begin{equation*}
\dict_{n,n}  =  \kl{\shiftop_m \wave_{j,k}^n: \skl{j,k} \in \Om_n \text{ and } m \in \set{0, \dots,  n - 1}} \,.
\end{equation*}
Equation~\req{numel-cs} shows that  the translation invariant wavelet frame contains $n \log_2 n$ different elements. Further, from the proof of this Lemma  it follows that
$\dict_{n,n}$ is a tight frame with  frame bound equals
$n$.  After removing  multiple elements in $\dict_{n,n}$, the resulting frame is non-tight but still has upper frame bounds
$b_n=n$ tending to infinity as $n \to \infty$.
One concludes that  Condition~\ref{it:frame3} fails to hold for the translation invariant wavelet transform.
The increasing  frame bounds somehow reflect the increasing
redundancy and dependency of the coarse scale wavelets with increasing
$n$.

One might conjecture that still  the  distribution result of Theorem \ref{thm:cs} holds true  with $M$
replaced by $n$. However, we shall show that this is not
the case.
Intuitively, the increasing correlation  of the coarse scale wavelets with increasing $n$ causes   the maximum
$\snorm{\Wave_{n,n}\eps_n}_\infty$ to be   in probability smaller than the maximum of  $n\log_2 n$
independent coefficients.
Although the sets if Theorem~\ref{thm:cs} are still confidence regions (as follows from Sidak's Lemma~\ref{lem:sidak}), they are no longer sharp
and the considered  thresholds are unnecessarily large.
The following theorem gives a much smaller radius for these confidence regions;
in particular this  significantly  improves~\cite[Theorem~4.4]{BerWel02}.

\begin{theorem}[Translation invariant soft-thresholding] \label{thm:ti}
Assume that the  mother wavelet $\wave$ is continuously differentiable, which implies that
$\skl{\bar\wave \ast \wave} \skl{t}
= 1 - c^2 t^2/2 + o\skl{t^2}$
for some constant $c$.
(Here  $\ast$ denotes the circular convolution and
$\bar\wave\kl{s} := \wave\kl{-s}$.)
Further denote by $\Wave_{n,n}$ the corresponding discrete translation invariant wavelet transform.

Then, the following assertions hold true:

\begin{enumerate}[label=(\alph*),topsep=0.0em]
\item \textbf{Distribution:}\label{it:ti1}
Let  $\skl{\noise_n}_{n\in \N}$ be a sequence of
noise vectors  in $\R^{\I_n}$ with independent $N\mkl{0,\sigma^2}$-distributed entries.
Then, for every $z\in \R$,
\begin{equation} \label{eq:gumbel-ti}
\liminf_{n \to \infty}
\wk\set{ \mnorm{\Wave_{n,n}\noise_n}_\infty
\leq
\sqrt{2 \log n}
+
\frac{z + \log \kl{ c/\pi}}{\sqrt{2\log n}}
}
\geq  \exp\kl{-e^{-z} }\,.
\end{equation}

\item \label{it:ti2}
\textbf{Confidence regions:}
Let $\alpha_n \in \kl{0,1}$
be a sequence  converging to some $\alpha \in [0,1)$.
Then, we have
\begin{equation*}
\liminf_{n \to \infty} \wk \set{
 \mnorm{\Wave_{n,n}
\mkl{\signal_n - \data_n} }_\infty \leq
T_{n} \mkl{\alpha_n}
\,;
\forall
\signal_n  \in \R^{\I_n}
}  \geq
1- \alpha  \,,
\end{equation*}
 when using the thresholds
$T_{n} \mkl{\alpha_n}
:=
\sigma \sqrt{2\log n}
+ \sigma \,
\skl{2\log n}^{-1/2} \log \log \skl{1/\skl{1-\alpha_n}} +  \log \kl{c / \pi}
$.

\item \label{it:ti3}
\textbf{Smoothness:}
Let $\hat \signal_{n,n}^*$ denote the soft-thresholding estimator using the threshold $T_{n}\skl{\alpha_n}$ defined  in
Item~\ref{it:ti2}. If the considered mother wavelet
has $m>r$ vanishing moments and is $m$ times continuously differentiable, then
\begin{equation*}
\liminf_{n\to\infty}
\wk\set{
\snorm{\hat\signal_{n,n}^*}_{\B_{p,q}^r}
\leq  \snorm{\signal}_{\B_{p,q}^r}  \,;
\forall
\signal  \in \B_{p,q}^r } \geq   1 -  \alpha \,.
\end{equation*}
\end{enumerate}
\end{theorem}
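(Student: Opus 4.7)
The plan is to focus on Part~\ref{it:ti1}; Parts~\ref{it:ti2} and~\ref{it:ti3} then follow from this distributional bound exactly as in items~\ref{it:wave2} and~\ref{it:wave3} of Theorem~\ref{thm:wave}, by inverting the Gumbel quantile $z\skl{\alpha_n} = -\log\log\skl{1/\skl{1-\alpha_n}}$ and invoking Theorem~\ref{thm:smooth} together with the component-wise monotonicity of the Besov norms $\enorm{}_{p,q,r}$.

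For Part~\ref{it:ti1} I would decompose the maximum by wavelet scale. For each $j \in \set{0,\ldots,\log_2 n -1}$ the normalized coefficients
\begin{equation*}
X_{j,m} := \minner{\shiftop_m \wave^n_{j,0}}{\noise_n}/\sigma, \qquad m = 0,\ldots,n-1,
\end{equation*}
form a centered stationary Gaussian sequence of unit variance whose covariance $r_j\skl{l} = \minner{\wave^n_{j,0}}{\shiftop_l \wave^n_{j,0}}$ agrees up to $O\skl{1/n}$ discretization errors with the continuous-domain autocorrelation $\skl{\bar\wave \ast \wave}\skl{2^j l /n}$. The hypothesis $\skl{\bar\wave \ast \wave}\skl{t} = 1 - c^2 t^2/2 + o\skl{t^2}$ therefore tells us that $r_j$ has quadratic decay near the origin with curvature $c\cdot 2^j$.

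For each individual scale I would then invoke the classical extreme value theory of smooth stationary Gaussian processes (Pickands--Piterbarg--Berman): regarding $X_{j,\cdot}$ as samples of a stationary Gaussian process on the periodic interval $[0,1]$, the Kac--Rice formula yields that the expected number of upcrossings of level $u > 0$ by $\sabs{X_{j,\cdot}}$ equals $\skl{c\cdot 2^j/\pi}\,e^{-u^2/2}\skl{1+o(1)}$ as $u\to\infty$ (the one-sided intensity $c\cdot 2^j/(2\pi)$ is doubled by the down-crossings of $-X_{j,\cdot}$). Summing over the $\log_2 n$ scales produces a geometric series dominated by its top term,
\begin{equation*}
\sum_{j=0}^{\log_2 n - 1} \frac{c\cdot 2^j}{\pi}\,e^{-u^2/2}
=
\frac{c\skl{n-1}}{\pi}\,e^{-u^2/2}
\sim
\frac{cn}{\pi}\,e^{-u^2/2},
\end{equation*}
which is the total expected number of exceedances of level $u$ by $\mnorm{\Wave_{n,n}\noise_n}_\infty/\sigma$. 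A Rice--Pickands Poisson approximation then gives $\wk\sset{\mnorm{\Wave_{n,n}\noise_n}_\infty/\sigma \leq u} \geq \exp\mkl{-\skl{cn/\pi}\,e^{-u^2/2}} - o\skl{1}$, and substituting $u = \sqrt{2\log n} + \skl{z + \log\skl{c/\pi}}/\sqrt{2\log n}$ drives the exponent to $-e^{-z}$, establishing the claimed liminf bound.

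The principal obstacle is the rigorous decoupling between different scales. Since the upper frame bounds of $\dict_{n,n}$ diverge as $n \to \infty$, the asymptotic stability of Definition~\ref{def:stab} fails, so neither Lemma~\ref{lem:main} nor a direct application of Berman's inequality applies off the shelf. I would address this by explicitly bounding the cross-scale correlators $\minner{\wave^n_{j,0}}{\shiftop_l \wave^n_{j',0}}$ for $j \neq j'$, which are uniformly $o\skl{1}$ thanks to the continuous differentiability of $\wave$ and the corresponding decay of the cross-correlation, and by controlling the long-range within-scale dependencies by a level-set splitting in the style of the proof of Lemma~\ref{lem:main}. The resulting Berman-type residual sum will be small because the total cross-scale contribution to the variance of the upcrossing count is of the same order as the right-hand side of the displayed equality, which tends to $e^{-z}$ at the rescaled level $u$.
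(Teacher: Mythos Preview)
Your upcrossing heuristic is sound and lands on the right intensity $cn/\pi$; the paper reaches the same number. But the rigorous decoupling you propose does not go through. Two concrete problems:
\begin{itemize}
\item The cross-scale inner products $\minner{\wave^n_{j,0}}{\shiftop_l\wave^n_{j',0}}$ are \emph{not} uniformly $o(1)$. For adjacent scales $j' = j\pm 1$ they equal fixed continuous-domain overlaps $\sinner{\wave_{j,0}}{\wave_{j',l/n}}$ that are bounded away from zero for suitable $l$, independently of $n$.
\item More seriously, the Berman-type splitting of Lemma~\ref{lem:main} fails already \emph{within} the coarse scales. At scale $j$ the covariance is $\kappa\skl{2^j l/n}$, so each coefficient has roughly $n/2^j$ neighbours with correlation close to~$1$; summing over~$j$ gives order $n^2$ highly correlated pairs, whereas Lemma~\ref{lem:main} needs this count to be $o\mkl{\sabs{\Om_n}/\sqrt{\log\sabs{\Om_n}}} = o\mkl{n\sqrt{\log n}}$. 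A Poisson approximation via Chen--Stein would face the same obstruction in its second-moment term.
\end{itemize}

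The paper bypasses both issues with a different device. It first \emph{oversamples}: at scale $j$ it takes $2^j n$ coefficients $X_n\skl{j,\ell} = \sinner{\wave_{j,\ell/(2^j n)}}{\eta}$, so that the within-scale covariance becomes $\kappa\skl{(\ell-\ell')/n}$ \emph{independently of $j$}. Since the translation-invariant coefficients are a subset of these, $\wk\sset{\snorm{\Wave_{n,n}\noise_n}_\infty \le T} \ge \wk\sset{\snorm{X_n}_\infty \le T}$. Next it chops each scale into $2^j$ blocks of length $n$ and \emph{zeros} all between-block and between-scale correlations to obtain a vector $Y_n$; a Sidak/Gaussian-correlation inequality (not a Berman bound) gives $\wk\sset{\snorm{X_n}_\infty \le T} \ge \wk\sset{\snorm{Y_n}_\infty \le T}$, with no smallness of correlations required. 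Now $Y_n$ consists of $\sum_j 2^j = n-1 \sim n$ \emph{i.i.d.} blocks, each distributed as $n$ equispaced samples of the single stationary process $X(t) = \sinner{\wave_{0,t}}{\eta}$ on $[0,1]$. Standard continuous-parameter extreme value asymptotics give $\wk\sset{\max_{t}\sabs{X(t)} > T_n} \sim \skl{c/\pi}e^{-T_n^2/2}$, and raising $\mkl{1 - \skl{c/\pi}e^{-T_n^2/2}}$ to the $n$-th power produces the Gumbel limit.

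So your scale-by-scale Kac--Rice sum is the right intuition, but the clean way to make it rigorous for the lower bound is Sidak-type decoupling (which exploits that we only need a \emph{liminf}), together with the oversampling trick that forces all blocks to have a common law. Your derivation of Parts~\ref{it:ti2} and~\ref{it:ti3} from Part~\ref{it:ti1} is fine.
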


\begin{proof}
The key to all results is the distribution bound given
Item~\ref{it:ti1}. Its proof  is somehow  technical and is
presented in Section~\ref{ap:thm:ti}.
The   other claims follow  from Item~\ref{it:ti1} combined with the results of the previous sections (namely Theorems~\ref{thm:main},
\ref{thm:confidence} and~\ref{thm:smooth}), and
are verified as the corresponding  statements in the
proof of Theorem~\ref{thm:wave}.
\end{proof}

\begin{remark}
Consider  a sequence of standardized normal vectors $\eta_n$ each of them having  $ M n \kl{\log n}^r$ independent entries, where  $M$ is some fixed integer and  $r\geq0$    some fixed nonnegative number.
From   Proposition~\ref{prop:abs} we know
that $\snorm{\eta_n}_\infty$  is of Gumbel type with normalization sequences $a\kl{\chi, M n \kl{\log n}^r}$ and $b\kl{\chi, M n \kl{\log n}^r}$. One  easily verifies that
\begin{multline}\label{eq:evr}
a\kl{\chi, M n \kl{\log n}^r}z+ b\kl{\chi, M n \kl{\log n}^r}
\\
=
\sqrt{2\log n}
+
\frac{\kl{-1/2+r}\log\log n
+
\log \mkl{M / \sqrt{\pi}}}{\sqrt{2\log n}}
+
o\kl{1/\sqrt{2\log n} } \,.
\end{multline}
This allows to compare  the bound in \req{gumbel-ti}  with the asymptotic distribution of a certain number of independent random variables.
Indeed, comparing~\req{gumbel-ti} with \req{evr} we can conclude, that $\Wave_{n,n}\noise_n$ less or equal in probability than the maximum of
$ M n \sqrt{\log n}$ independent normally distributed random variables with $M:= [c \sqrt{\pi}]$.
Hence \req{gumbel-ti} improves  the primitive bound
obtained from the distribution of   $n\log n$  independent coefficients by a factor $\sqrt{\log n}/c$.\end{remark}

\begin{remark}
It is a difficult task to compute  the asymptotic
distribution of the translation invariant wavelet coefficients
exactly.
This is due to the fact that for coarse scales the
coefficients get increasingly correlated, whereas on the
fine scales the correlations remain bounded away from $\sigma^2$. No appropriate tools for asymptotic  extreme value analysis of  such mixed type random fields
seem to exist. Nevertheless, we  believe that the maxima  of the translation  invariant wavelet coefficients  are of Gumbel type but  with  even smaller  normalization constants than the ones used in~\req{gumbel-ti}.
In particular, it may even turn out that the threshold
$\sigma \sqrt{2 \log n}$  provides the denoising property for the translation
invariant system.\end{remark}

\subsection{Curvelet Thresholding}
\label{sec:curv}

Second generation curvelets
(introduced in  \cite{CanDon04,CanDon05a, CanDon05b})
are functions $\wave_{j,\ell,k}$ in $\R^2$ depending on a scale index $j \in \N$,
an orientation parameter  $\ell \in \sset{0, \dots ,  4 \cdot 2^{\ckl{j/2}}-1}$ and a location parameter  $k\in \Z^2$.
They are known to provide an almost optimal sparse approximation of piecewise $C^2$ functions with piecewise $C^2$ boundaries  (as shown in~\cite{CanDon04});  this class of  functions usually  serves as accurate cartoon model for natural images.
The main curvelet property yielding this  approximation
result is the increasing  anisotropy at finer scales. This  feature also distinguishes them from standard wavelets in
higher dimension.

There exists other related function systems with
similar properties.
The  cone adapted shearlet frame
(introduced in \cite{GuoKutLab06,GuoLabLimWeiWil06,LabLimKutWei05})
is  very similar to the curvelet frame and  shares its
optimality when  approximating piecewise $C^2$ images
with piecewise $C^2$ boundaries, see~\cite{GuoLab07}.
Yet another closely related function system are the
contourlets introduced by Do and Vetterli~\cite{DoVet03,DoVet05}.
For simplicity we focus  on the curvelets; similar statements could be made for the shearlet and contourlet frames.

\subsubsection{Discrete Curvelet Frames}

The discrete  curvelet transform  computes inner products of
$\signal_n \in \R^{n\times n}$ with discrete curvelets
$\wave_{j,\ell,k}^n \in \R^{n\times n}$.
As for the wavelet transform,  the  elements
$\wave_{j,\ell,k}^n$ are not computed explicitly and
defined implicitly by the transform algorithm.
Different  implementations  of the continuous curvelet transform give rise to different discrete frame elements
$\wave_{j,\ell,k}^n$. Current implementations of the curvelet
transform are  computed in the Fourier domain.
Below we shall focus on the wrapping based implementation
of the curvelet transform introduced in~\cite{CanDemDonYin06}.
This transform is an isometry which makes the computation of its
pseudo-inverse particularly simple.

Let $n =  2^J$ be an integer power of two with $J$ denoting the maximal scale index.
The discrete  curvelets and the discrete curvelet transform
are composed of the following ingredients:
\begin{itemize}[topsep=0em]
\item
First, define  $\Lambda_n$ as the set of all
pairs $\skl{j,\ell}$ satisfying    $j \in \set{0, \dots, \log_2 n-2 }$ and
$\ell \in \sset{0, \dots ,  4 \cdot 2^{\ckl{j/2}}-1}$.
The  index sets of the discrete curvelets is defined by
\begin{equation*}
\Om_n:=
	\bigl(\kl{j,\ell,k}:\skl{j,\ell}\in \Lambda_n \text{ and }
	k \in D_{j,\ell} \bigr) \,.
\end{equation*}
where $D_{j,\ell} = \sset{ 0, \dots, K_{j,\ell;1}-1} \times \sset{ 0 , \dots, K_{j,\ell;2}-1}$
for certain given numbers $K_{j,\ell;1}\sim 2^{j}$ and $K_{j,\ell;2}\sim 2^{j/2}$.
One refers to $j$ as scale index, $\ell$ as the orientation index, and $k$ the location index.

\item
Next, one constructs smooth nonnegative window functions
$\wi_{j,\ell}\colon \R^2 \to \R$  satisfying the identity
\begin{equation*}
	\kl{\forall z \in \R^2}\qquad
	\sum_{j=0}^{J-2} \sum_{\ell=0}^{4 \cdot 2^{\ckl{j/2}}-1}
	\sabs{\wi_{j,\ell}\skl{z}}^2 =1 \,.
\end{equation*}
The functions $\wi_{j,\ell}$ are  essentially obtained  by anisotropic scaling and shearing a single window function;
see~\cite{CanDemDonYin06} for a detailed construction.

\item
For any  index triple $\kl{j,\ell,k} \in \Om_n$
the  discrete curvelet at  scale $j$, having
orientation $\ell/2^{\ckl{j/2}}$
and  location $k = \skl{k_1/K_{j,\ell;1}, k_2/K_{j,\ell;2}}$ is
defined by its Fourier representation
\begin{equation}\label{eq:curv-def}
\mkl{\Ft_n \curve_{j,\ell,k}^{n}}
\skl{m}
=
\frac{\wi_{j,\ell}\kl{m} }{c_{j,\ell}}
e^{- 2\pi i
\skl{m_1k_1/K_{j,\ell;1}-m_2 k_2/K_{j,\ell;2}}} \,.
\end{equation}
Here the coefficients $c_{j,\ell}$ are chosen   in such a way that $\snorm{\wave_{j,\ell,k}^n} = 1$
and $\Ft_n$ denotes the discrete Fourier transform.

\item
Finally, one defines  the curvelet frame $\dict_n= \mkl{\wave_{j,\ell,k}^n: \kl{j,\ell,k} \in \Om_n}$ and denotes
by $\Ct_n\colon \R^{n\times n} \to \R^{\Om_n}$ the
corresponding analysis operator,  which has been named
digital curvelet transform via wrapping
in \cite{CanDemDonYin06}.  In the following we will  refer to
$\Ct_n$ simply as \emph{discrete curvelet transform}.
We emphasize again that the implementation of the discrete curvelet transform does not require to compute the
curvelets   $\wave_{j,\ell,k}^n$ explicitly.
\end{itemize}

Implementations of the discrete  curvelet transform and its  pseudoinverse using $\mathcal O \skl{n^2 \log n }$ operation counts are freely available
at~\url{http://curvelet.org}.
Although this  implementation  does not use  normalized
frame elements, the   constants $c_{j,\ell}$ can easily be computed after  the actual curvelet transformation and applied for normalizing the curvelet coefficients prior to
denoising.  The  denoising
demo  \verb"fdct_wrapping_demo_denoise.m" included in the curvelet  software package in fact computes  the norms of the discrete curvelets and uses them  for proper scaling
of the chosen thresholds.

 \subsubsection{Curvelet Denoising}

We now consider our denoising problem~\req{problem}, which, after taking the discrete curvelet transform,
simply reads   $\Data_n = \Para_n +
\Ct_n \noise_n$.
As usual, the estimator we consider is soft-thresholding $\hat\Para_n = \softop \kl{\Data_n,T}$ of the curvelet coefficients.

Similar to the wavelet case, we  measure smoothness in terms of the weighted $(p,q)$-norms, depending on
certain norm parameters $p,q \geq 1$ and a parameter $r\geq 0$ describing the degree of  smoothness.
More precisely, we define
\begin{equation*}
\norm{\Para_n}_{p,q,r}
:=
\sqrt[p]{\sum_{j,\ell}
2^{jsq}
\norm{\Para_n\skl{j,\ell,\cdot}}_p^q}
\qquad
\text{ with } \quad
s = r + \frac{3}{2} \kl{\frac{1}{2}- \frac{1}{p}} \,.
\end{equation*}
These types of norms applied to the continuous domain curvelet coefficients have been defined and studied  in~\cite{BorNil07}. In that paper also relations between  these norms and classical Besov norms have been derived.

\begin{theorem}[Curvelet soft-thresholding] \label{thm:curve}
The discrete curvelet frames
$\dict_n = \mkl{\wave_{j,\ell,k}^n: \kl{j,\ell,k} \in \Om_n}$
defined by Equation~\req{curv-def} are asymptotically stable.
In particular, the following assertions hold true:

\begin{enumerate}[label=(\alph*),topsep=0.0em]
\item\label{it:curve1}
\textbf{Distribution:}
Let  $\noise_n$ be a sequence of
noise vectors  in $\R^{\I_n}$ with independent
$N\mkl{0,\sigma^2}$-distributed entries.
Then, for   every $z\in \R$,
\begin{equation*}
\lim_{n \to \infty}
\wk\set{ \norm{\Ct_{n}\noise_n}_\infty
\leq
\sigma \sqrt{2 \log \abs{\Om_n}}
+
\sigma
\, \frac{2z-  \log\log \abs{\Om_n} - \log\pi }{2\sqrt{2\log \abs{\Om_n}}}
}
= \exp\kl{-e^{-z} }\,.
\end{equation*}

\item \label{it:curve2}
\textbf{Confidence regions:}
Let $\alpha_n \in \kl{0,1}$
be a sequence  of significance levels converging to some
$\alpha \in [0,1)$ and let
$T \mkl{\alpha_n, \abs{\Om_n}}$ denote the
corresponding  EVTs defined in~\req{alphathresh}.
Then,
\begin{equation*}
\lim_{n \to \infty} \wk \set{
 \norm{
\Ct_n\kl{\signal_n - \data_n}}_\infty \leq T \kl{\alpha_n, \abs{\Om_n}}
\,;
\signal_n   \in \R^{\I_n}
}  =  1 -  \alpha \,,
\end{equation*}

\item \label{it:curve3}
\textbf{Smoothness:}
Let $\hat\Para_n$ denote the soft-thresholding
estimate  using the extreme value threshold $T \mkl{\alpha_n, \abs{\Om_n}}$.
Then, with any of the  norms defined above, we have
\begin{equation*}
\liminf_{n \to \infty} \wk
\set{
\snorm{\hat\Para_n}_{p,q,r}
\leq  \snorm{\Ct_n \signal_n}_{p,q,r}
\,;
\signal_n   \in \R^{\I_n}
} \geq
1 -  \alpha \,.
\end{equation*}
\end{enumerate}
\end{theorem}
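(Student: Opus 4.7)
The plan is to reduce the three claims to Theorems~\ref{thm:main}, \ref{thm:confidence}, and~\ref{thm:smooth} by showing that the family $\skl{\dict_n}_{n\in\N}$ is asymptotically stable in the sense of Definition~\ref{def:stab}. Granting this, part~\ref{it:curve1} is a direct specialization of Theorem~\ref{thm:main}, part~\ref{it:curve2} of Theorem~\ref{thm:confidence}, and part~\ref{it:curve3} follows from Theorem~\ref{thm:smooth} once one notes (immediately) that each weighted $(p,q)$-norm $\snorm{\edot}_{p,q,r}$ is monotone in the componentwise magnitudes of its argument and hence satisfies~\req{Jmono}. This is the same template as for the wavelet result Theorem~\ref{thm:wave}.

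The substantive work is therefore the verification of the two conditions in Definition~\ref{def:stab}. For the uniform upper frame bound required by~\ref{it:frame3}, I would start from the partition of unity $\sum_{j,\ell}\sabs{\wi_{j,\ell}}^2\equiv 1$, which together with Parseval in the discrete Fourier domain shows that the un-normalized wrapping transform is tight with frame bound $1$. Under the normalization $\snorm{\wave_{j,\ell,k}^n}=1$ the frame bound of $\dict_n$ is controlled by $\max_{j,\ell}c_{j,\ell}^{-2}$, and by Parseval one has $c_{j,\ell}^2=\sum_m\sabs{\wi_{j,\ell}(m)}^2$. The self-similar construction of the windows in~\cite{CanDemDonYin06} (anisotropic scaling and shearing of a fixed smooth mother window of compact support) shows that $c_{j,\ell}^2/\skl{K_{j,\ell;1}K_{j,\ell;2}}$ converges to a positive integral, giving uniform upper and lower bounds on $c_{j,\ell}^{-2}$ and hence $B:=\sup_n b_n<\infty$.

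The real obstacle is Condition~\ref{it:frame2}, and I would aim for the stronger conclusion that there exists $\rho\in(0,1)$ such that $\sabs{\sinner{\wave_{j,\ell,k}^n}{\wave_{j',\ell',k'}^n}}\leq\rho$ for \emph{all} distinct index triples and all sufficiently large $n$, so that the exceptional set in~\ref{it:frame2} is in fact empty. Using~\req{curv-def} and Parseval, this inner product equals $\skl{c_{j,\ell}c_{j',\ell'}}^{-1}\sum_m\wi_{j,\ell}(m)\wi_{j',\ell'}(m)\,e^{2\pi i\inner{m}{\Delta}}$ for an explicit shift $\Delta$ determined by $(k,k',j,j',\ell,\ell')$. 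When $(j,\ell)\neq(j',\ell')$, the frequency supports overlap only on the small transition regions dictated by the partition of unity; a strict Cauchy--Schwarz defect then forces the modulus of the inner product to lie strictly below $1$. When $(j,\ell)=(j',\ell')$ but $k\neq k'$, the expression reduces to a discrete Fourier coefficient of the probability weights $\wi_{j,\ell}^2/c_{j,\ell}^2$ at a nonzero frequency, which smoothness of $\wi_{j,\ell}$ bounds strictly away from $1$; since all $(j,\ell)$ originate from the same mother window, the bound can be chosen uniform in $(j,\ell,n)$. This uniform off-diagonal bound is the delicate point of the argument, but once it is in hand the exceptional set in~\ref{it:frame2} is empty, asymptotic stability is established, and parts~\ref{it:curve1}--\ref{it:curve3} follow from the three master theorems as indicated.
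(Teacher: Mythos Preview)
Your proposal is correct and follows essentially the same route as the paper: verify the two conditions of Definition~\ref{def:stab} for the discrete curvelet frames and then invoke Theorems~\ref{thm:main}, \ref{thm:confidence}, and~\ref{thm:smooth} for parts~\ref{it:curve1}--\ref{it:curve3}. The only difference is that the paper outsources both verifications to the literature --- citing \cite[Proposition~6.1]{CanDemDonYin06} for the uniform upper frame bound (faithfulness to an infinite-dimensional curvelet frame) and arguing along the lines of \cite{CanDon05a} for the uniform off-diagonal bound $\sabs{\sinner{\wave_{j,\ell,k}^n}{\wave_{j',\ell',k'}^n}}\leq\rho<1$ --- whereas you sketch these estimates directly from the partition of unity and the self-similar window construction.
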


\begin{proof}
All frame elements are normalized due to the chosen scaling. Moreover, as shown in~\cite[Proposition~6.1]{CanDemDonYin06}, the
discrete curvelet frame $\dict_n$ is faithful to an underlying infinite dimensional  curvelet frame  obtained by periodizing the curvelets on
the continuous domain $\R^2$.  This immediately yields Condition~\ref{it:frame3}.
Moreover, along the lines of~\cite{CanDon05a} (which uses a slightly different curvelet system) one easily shows that the inner products satisfy
\begin{equation*}
\inner{\wave_{j,\ell,k}^n}{\wave_{j',\ell',k'}^{n}} \leq \rho < 1 \,.
\end{equation*}
for some constant $\rho < 1$ independent on $n$ and all indices. This obviously implies Condition~\ref{it:frame2}.
All  claims in Items~\ref{it:curve1}--\ref{it:curve3}
then follow  from Theorems~\ref{thm:main},  \ref{thm:confidence} and   \ref{thm:smooth}
derived in the previous section.
\end{proof}

\section*{Acknowledgement}

Both authors want to thank Daphne B\"ocker, Helene Dallmann, Housen Li and Andrea Kraijna for useful discussions and comments on the manuscript.
The work of A. M.  has been supported by the Volkswagen Foundation and the
 German Science Foundation (DFG) within the projects FOR 916, CRC 755, 803 and RTN 1023.

\appendix

\section{Extremes of Normal Random Vectors}
\label{ap:evd}

Let $\skl{\Om_n}_{n \in \N}$  be  a sequence of  finite index  sets
with monotonically increasing cardinalities
$\abs{\Om_n}$  satisfying
$\lim_{n \to \infty} \abs{\Om_n}   =  \infty$.
Moreover, for every $n \in \N$, let  $\Noise_n  := \skl{\Noise_{n}\skl{\om}: \om \in \Om_n}$ be given
standardized  normal random vectors, which means that  $\Noise_{n}\skl{\om} \sim N\mkl{0,1}$ for every  $n \in \N$ and
$\om \in \Om_n$.  We  are mainly interested in  random vectors with dependent entries, in which  case $\cov \skl{\Noise_n\kl{\om} , \Noise_n\kl{\om'} } \neq 0$ for at least some pairs $\skl{\om , \om'} \in \Om_n^2$ with  $\om \neq \om'$.

As the main result of this section we derive the asymptotic  distribution of $\snorm{\Noise_n}_\infty$  for a sequence  $\Noise_n$ of  dependent normal  vectors whose covariances satisfy a certain summability condition (see Theorem~\ref{thm:abs}).
Whereas similar results are  known for $\max\mkl{\Noise_n}$,
to the best of our knowledge,  such kind of results
are new for $\snorm{\Noise_n}_\infty$.
Since   $\sabs{\Noise_{n}\skl{\om}}^2 \sim \chi^2$ is  chi-squared   distributed with one degree of freedom,  our
results can also be interpreted as new results for
the asymptotic extreme value theory of
dependent $\chi^2$-distributed random vectors.

\subsection{Maxima of Normal Vectors}

We will start by reviewing and slightly refining  the main results from  statistical  extreme value theory for maxima of normal vectors as we require them in this paper.

The most basic extreme value result deals with the case   where  the components of
$\Noise_n$ are independent. In this case it is well known,
that, after rescaling, $\max\mkl{\Noise_n}$ converges to the  Gumbel distribution as $n \to \infty$.

\begin{proposition} \label{prop:normal}
Let $\mkl{\Noise_n}_{n\in\N}$ be a sequence of standardized normal  random vectors in $\R^{\Om_n}$
with independent entries. Then the maxima $\max\mkl{\Noise_n}$
are of Gumbel type (see Definition~\ref{def:gumbeltype}) with normalization sequences  $a\skl{\sabs{\Om_n}, n}$, $b\skl{\sabs{\Om_n}, n}$ defined
by  \req{am}, \req{bm}.
\end{proposition}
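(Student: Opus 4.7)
By independence of the components, $\wk\{\max(\Noise_n)\le x\}=\Phi(x)^{\abs{\Om_n}}$ where $\Phi$ denotes the standard normal distribution function. Writing $N=\abs{\Om_n}$ and $x_n=a(N,N)\,z+b(N,N)$ for fixed $z\in\R$, the task reduces to showing the pointwise limit
\begin{equation*}
N\kl{1-\Phi(x_n)}\longrightarrow e^{-z}\qquad\text{as }n\to\infty,
\end{equation*}
since then $\Phi(x_n)^N=\mkl{1-(1-\Phi(x_n))}^N\to\exp(-e^{-z})$. The uniform convergence in $z$ stated in Definition~\ref{def:gumbeltype} is automatic once pointwise convergence is established, because both $z\mapsto\Phi(a_n z+b_n)^N$ and the Gumbel distribution are monotone with continuous limit (second half of Polya's theorem).

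The main tool is the classical Mills' ratio asymptotic $1-\Phi(x)=\phi(x)/x\cdot(1+O(1/x^2))$ as $x\to\infty$, with $\phi(x)=(2\pi)^{-1/2}\exp(-x^2/2)$. First I would expand
\begin{equation*}
x_n^2 = b(N,N)^2 + 2\,a(N,N)\,b(N,N)\,z + a(N,N)^2 z^2 = 2\log N - \log\log N - \log(4\pi) + 2z + o(1),
\end{equation*}
which follows from the explicit formulas \req{am}, \req{bm} after collecting lower-order terms. Substituting this into $\phi(x_n)$ gives $\phi(x_n)=\sqrt{2\log N}\cdot N^{-1}\cdot e^{-z}\cdot(1+o(1))$, while $x_n=\sqrt{2\log N}\,(1+o(1))$, so that $N\phi(x_n)/x_n\to e^{-z}$. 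Mills' ratio then yields the required limit.

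The argument is entirely classical (it is the textbook derivation of the Fisher--Tippett--Gnedenko theorem for the normal distribution, see \cite{LeaLinRoo83}), so I expect no serious obstacle; the only point where care is needed is the bookkeeping in the expansion of $x_n^2$, because the constant $\log(4\pi)$ built into $b(N,N)$ must cancel exactly against $-\log(2\pi)$ coming from $\phi$ and $-\tfrac12\log\log N$ coming from the Mills' ratio factor $1/x_n$, leaving only the $e^{-z}$ term. Since no other asymptotic analysis is required, the proposition is essentially a controlled computation, and this is why the author can state it as a proposition and refer to the literature.
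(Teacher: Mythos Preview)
Your proposal is correct; the computation you outline is precisely the classical argument behind \cite[Theorem~1.5.3]{LeaLinRoo83}, which is all the paper invokes as its proof. In other words, the paper's proof is a bare citation to the very reference you name, and your proposal simply unpacks that reference, so the approaches coincide.
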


\begin{proof}
See, \cite[Theorem~1.5.3]{LeaLinRoo83}.
\end{proof}

If the  entries of $ \Noise_n$ are dependent,
then the result of Proposition~\ref{prop:normal} does not necessarily hold true. There is, however, a simple  and sufficient criterion  on the covariances $\cov \mkl{\Noise_n\skl{\om}, \Noise_n\skl{\om'}}$ of a sequence of  dependent normal vectors
such that the maxima still are of Gumbel type with
the same normalization sequences. This criterion is an immediate consequence of the so called normal comparison  Lemma  or Berman's inequality (see \cite[Theorem~4.2.1]{LeaLinRoo83}).
For later purpose, where we  study   $\snorm{\Noise_n}_\infty$ instead of   $\max\mkl{\Noise_n}$, we require  a quite
recent improvement of this  important inequality which is
due to Li and Shao~\cite{LiSha02}.
The  standard form of the  normal comparison  Lemma \cite[Theorem~4.2.1]{LeaLinRoo83} has  already been applied
for redundant wavelet systems in~\cite{BerWel02,NgKohPuvAbe08}, which however,
only yields results for  maxima of $\Noise_n$
without taking absolute values.
We stress again, that taking  absolute values slightly  change the constants  in contrast to
relations \req{noise-remove}.

\begin{lemma}\label{lem:comparison}
Let $\eta_n$, $\Noise_n$  be standardized normal
random vectors in $\R^{\Om_n}$, denote its covariances  by
 $\kappa_{\eta_n}\skl
{\om,\om'} :=
\cov \mkl{\eta_n\skl{\om}, \eta_n \skl{\om'} }$,
$ \kappa_{\Noise_n}\skl{\om,\om'} :=
\cov \mkl{\Noise_n\kl{\om}, \Noise_n\skl{\om'}}$,
and set  $\rho_n\skl{\om,\om'} := \max \mset{\sabs{\kappa_{\eta_n}\skl{\om,\om'}}, \sabs{\kappa_{\Noise_n}\skl{\om,\om'}}}$.
Then, for all $T_n \in \R$,
\begin{multline*}
\wk \set{ \max \mkl{\eta_n} \leq T_n }  -
\wk \set{ \max \mkl{\Noise_n}   \leq T_n}
\leq
\\
\frac{1}{4\pi}
\sum_{\om \neq \om' }
 \bkl{\arcsin\mkl{\kappa_{\eta_n}\skl{\om,\om'}} -
 \arcsin\mkl{\kappa_{\Noise_n}\skl{\om,\om'}}}_+   \;
\exp\kl{ \frac{-T_n^2} {1+\rho_n\skl{\om,\om'}}} \,.
\end{multline*}
Here    $z_+= \max \set{z,0}$  denotes   the positive part of
some   real number $z\in \R$ and $\arcsin$ denotes the inverse mapping of $\sin\colon  \ekl{-\pi/2, \pi/2} \to \ekl{-1, 1}$.
\end{lemma}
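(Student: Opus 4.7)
The natural approach is the Slepian--Plackett Gaussian interpolation argument, sharpened in the manner of Li and Shao. First I set $\Sigma_h := h\, \kappa_{\eta_n} + (1-h)\, \kappa_{\Noise_n}$ for $h \in [0,1]$; this is a convex combination of valid covariance matrices with diagonal identically $1$, hence itself a valid covariance. Let $Z_h$ denote a standardized Gaussian vector in $\R^{\Om_n}$ with covariance $\Sigma_h$, and set $F(h) := \wk\mset{\max(Z_h) \leq T_n}$, so that the left-hand side of the lemma equals $F(1) - F(0) = \int_0^1 F'(h)\, dh$.

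The key input is the derivative-with-respect-to-covariance formula, a consequence of Price's theorem / Gaussian integration by parts:
$$F'(h) = \sum_{\om < \om'} \bigl(\kappa_{\eta_n}(\om,\om') - \kappa_{\Noise_n}(\om,\om')\bigr)\, g_{\om,\om'}(h),$$
where $g_{\om,\om'}(h)$ is the bivariate density of $(Z_h(\om), Z_h(\om'))$ at $(T_n, T_n)$ times the conditional probability (bounded by $1$) that all remaining coordinates stay below $T_n$. For diagonal $1$ and correlation $\sigma$, the bivariate density at $(T_n,T_n)$ equals $(2\pi\sqrt{1-\sigma^2})^{-1}\exp(-T_n^2/(1+\sigma))$.

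Changing variables first to $\sigma = h\kappa_{\eta_n}(\om,\om') + (1-h)\kappa_{\Noise_n}(\om,\om')$ and then to $\sigma = \sin\theta$ (so that $d\sigma/\sqrt{1-\sigma^2} = d\theta$) converts the per-pair contribution into an integral of $\exp(-T_n^2/(1+\sin\theta))$ over the arc from $\arcsin\kappa_{\Noise_n}(\om,\om')$ to $\arcsin\kappa_{\eta_n}(\om,\om')$. Because $\sigma \mapsto \exp(-T_n^2/(1+\sigma))$ is monotone increasing on $[-1,1)$, bounding the integrand by its value at the endpoint with the larger correlation yields a factor $\exp(-T_n^2/(1+\rho_n(\om,\om')))$ per pair. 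Pairs with $\kappa_{\eta_n}(\om,\om') < \kappa_{\Noise_n}(\om,\om')$ contribute non-positively to $F(1) - F(0)$ and may be discarded, producing the positive part $(\cdot)_+$. Summing over $\om < \om'$ and then symmetrizing to the ordered-pair sum over $\om \neq \om'$, which doubles the count, accounts for the $1/(4\pi)$ prefactor.

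The main obstacle is the Li--Shao sharpening itself. Berman's classical inequality bounds the integrand uniformly by its value at the \emph{global} maximum correlation $\rho^* := \max_{\om \neq \om'}\rho_n(\om,\om')$ and retains an additional $(1-(\rho^*)^2)^{-1/2}$ factor. Replacing the global $\rho^*$ by pair-specific $\rho_n(\om,\om')$ and absorbing the $(1-\sigma^2)^{-1/2}$ into the $\arcsin$ arc-length is precisely what makes the bound effective in the asymptotically-stable-frame setting, where a few pairs may have correlation near $1$ while the overwhelming majority are tiny. The remaining technical points are verifying positivity of the conditional-probability factor in the Price-formula derivative and handling interpolation paths that cross zero in the $\arcsin$ substitution; both are routine once the correct ordering of the integration is fixed.
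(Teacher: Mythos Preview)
The paper does not give an independent proof of this lemma at all: its entire proof is the citation ``See~[Theorem~2.1]{LiSha02}.'' What you have written is a correct outline of exactly that Li--Shao argument (Gaussian interpolation, Plackett/Price differentiation with respect to the correlations, the $\sigma=\sin\theta$ substitution absorbing the $(1-\sigma^2)^{-1/2}$ factor into an $\arcsin$ arc-length, and the pairwise monotone bound yielding $\rho_n(\om,\om')$). So your proposal is correct and matches the approach the paper is invoking; you have simply unpacked the cited reference rather than pointing to it.
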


\begin{proof}
See~\cite[Theorem~2.1]{LiSha02}.
\end{proof}

In the special case where $\eta_n$  has
independent entries,  Lemma~\ref{lem:comparison}
has the following  immediate consequence given
in Lemma~\ref{lem:normal}.
This allows to extend Proposition~\ref{prop:normal}
to certain  sequences of dependent random vectors
by comparing them with  independent ones.

\begin{lemma}\label{lem:normal}
Let $\eta_n, \Noise_n$ be standardized normal random vectors in $\R^{\Om_n}$.
Assume that the entries  of $\eta_n$ are independent, and let
$\kappa_{n}$ denote  the covariance matrix of $\Noise_n$ defined by
 $\kappa_{n} \skl{\om,\om'} := \cov \mkl{ \Noise_n\kl{\om}, \Noise_n\kl{\om'}}$.
Then, for  all $T_n \in\R$,
\begin{multline} \label{eq:berman-norm}
\abs{\wk \set{  \max\mkl{\eta_n}    \leq T_n} -
\wk \set{ \max\mkl{\Noise_n}    \leq T_n} }
\\ \leq
\frac{1}{8}
\sum_{\om \neq \om'}
\abs{\kappa_{n} \mkl{\om,\om'}}
 \,
\exp\kl{- \frac{T_n^2}{ 1+\sabs{\kappa_{n}\skl{\om,\om'}}}}  \,.
\end{multline}
\end{lemma}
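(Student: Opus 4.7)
The plan is to derive Lemma~\ref{lem:normal} as a direct consequence of the comparison inequality in Lemma~\ref{lem:comparison}, which is already in hand. The key observation is that for the independent vector $\eta_n$, the off-diagonal covariances vanish, so in the application of Lemma~\ref{lem:comparison} we have $\kappa_{\eta_n}\skl{\om,\om'} = 0$ for $\om\neq \om'$, which collapses the $\arcsin$-difference to a single term and reduces the quantity $\rho_n\skl{\om,\om'}$ to $\sabs{\kappa_{n}\skl{\om,\om'}}$.

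First, I would apply Lemma~\ref{lem:comparison} in both directions: once with $\eta_n$ as the first argument and $\Noise_n$ as the second, and once with the roles reversed. In the first direction the bound becomes
\begin{equation*}
\wk\set{\max\mkl{\eta_n}\leq T_n} - \wk\set{\max\mkl{\Noise_n}\leq T_n}
\leq \frac{1}{4\pi}\sum_{\om\neq \om'}\mkl{-\arcsin\kl{\kappa_n\skl{\om,\om'}}}_+ \exp\kl{\frac{-T_n^2}{1+\sabs{\kappa_n\skl{\om,\om'}}}} ,
\end{equation*}
while in the second direction only the positive part of $\arcsin\mkl{\kappa_n\skl{\om,\om'}}$ survives. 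Combining them with the identity $\max\set{x_+,\skl{-x}_+} = \sabs{x}$, the left-hand side of \req{berman-norm} is bounded by $\tfrac{1}{4\pi}\sum_{\om\neq \om'}\sabs{\arcsin\kl{\kappa_n\skl{\om,\om'}}} \exp\kl{-T_n^2/(1+\sabs{\kappa_n\skl{\om,\om'}})}$.

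Second, I would use the elementary inequality $\sabs{\arcsin x} \leq \tfrac{\pi}{2}\sabs{x}$ for $\sabs{x}\leq 1$, which follows from the convexity of $\arcsin$ on $[0,1]$ together with $\arcsin 0 = 0$ and $\arcsin 1 = \pi/2$ (and the odd symmetry of $\arcsin$). Applying this pointwise with $x = \kappa_n\skl{\om,\om'}$, which lies in $[-1,1]$ since the $\Noise_n$ are standardized, turns the prefactor $\tfrac{1}{4\pi}\cdot \tfrac{\pi}{2}= \tfrac{1}{8}$, and the claimed inequality \req{berman-norm} drops out.

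There is essentially no obstacle here; the whole lemma is a specialization of Lemma~\ref{lem:comparison}. The only point that requires a moment of care is that one must apply the comparison inequality in both directions to obtain an \emph{absolute value} on the left-hand side, and the small analytic lemma $\sabs{\arcsin x}\leq \tfrac{\pi}{2}\sabs{x}$ must be invoked to match the form of the right-hand side in~\req{berman-norm}. Everything else is bookkeeping.
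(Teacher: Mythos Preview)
Your argument is correct. The paper itself does not give a proof of this lemma at all; it simply cites \cite[Corollary~2.2]{LiSha02}. What you have written is precisely the derivation of that corollary from \cite[Theorem~2.1]{LiSha02} (stated in the paper as Lemma~\ref{lem:comparison}): specialize to $\kappa_{\eta_n}\equiv 0$ off the diagonal, apply the one-sided bound in both directions, and replace $\sabs{\arcsin x}$ by $\tfrac{\pi}{2}\sabs{x}$. This is also exactly the template the paper uses in its own proof of the parallel Lemma~\ref{lem:abs} for $\snorm{\cdot}_\infty$, so your approach is fully in line with the paper's methods.
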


\begin{proof}
See~\cite[Corollary~2.2]{LiSha02}.
\end{proof}

Lemmas~\ref{lem:comparison} and~\ref{lem:normal}
are  significant  improvements of the standard versions
of the normal  comparison Lemma~\cite[Section~4]{LeaLinRoo83}
due to the absence of a singular factor
$\mkl{1- \sabs{\rho_n\skl{\om, \om'}}^2}^{-1/2}$  that is contained in earlier versions.
It is in fact the absence  of this  singular term that
we require  for deriving an inequality similar to the one in
Lemma~\ref{lem:normal} that compares the distributions of
$\snorm{\eta_n}_\infty$ and  $\snorm{\Noise_n}_\infty$
for two normal vectors  $\eta_n$ and $\Noise_n$.

\begin{theorem}\label{thm:normal}
Let $\mkl{\Noise_n}_{n \in \N}$ be a sequence of standardized normal  random vectors in $\R^{\Om_n}$ having covariance matrices
$\kappa_{n} \in \R^{\Om_n \times \Om_n}$
satisfying
\begin{equation}\label{eq:rest}
\lim_{n \to \infty}
\sum_{\om \neq \om'}
\abs{\kappa_{n} \mkl{\om,\om'}}
 \,
\kl{ \frac{\log \abs{\Om_n}}{\abs{\Om_n}^2}}^{1/\kl{1+\sabs{\kappa_n\skl{\om,\om'}}}}
=0  \,.
\end{equation}
Then, the maxima  $\max\mkl{\Noise_n}$ are of Gumbel type (see Definition~\ref{def:gumbeltype}) with
normalization constants $a\kl{N, \abs{\Om_n}}$, $b\kl{N, \abs{\Om_n}}$ defined by   \req{am}, \req{bm}.
\end{theorem}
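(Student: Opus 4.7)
The plan is to reduce Theorem~\ref{thm:normal} to the i.i.d.\ case of Proposition~\ref{prop:normal} via the sharpened normal comparison inequality in Lemma~\ref{lem:normal}. Let $\eta_n$ be standardized normal vectors in $\R^{\Om_n}$ with independent entries. Fix $z\in\R$ and set $T_n := a\kl{N,\abs{\Om_n}}\, z + b\kl{N,\abs{\Om_n}}$. Proposition~\ref{prop:normal} gives $\wk\sset{\max\mkl{\eta_n}\leq T_n}\to\exp\kl{-e^{-z}}$, and Lemma~\ref{lem:normal} yields
\begin{equation*}
\abs{\wk\sset{\max\mkl{\Noise_n}\leq T_n} - \wk\sset{\max\mkl{\eta_n}\leq T_n}}
\leq
\frac{1}{8}\sum_{\om\neq\om'} \abs{\kappa_n\skl{\om,\om'}}\,
\exp\kl{-\frac{T_n^2}{1+\sabs{\kappa_n\skl{\om,\om'}}}}\,,
\end{equation*}
so it suffices to show that this right-hand side tends to zero.

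The second step is a straightforward but careful asymptotic expansion of $T_n^2$. From the definitions \req{am} and \req{bm} one obtains, as $n\to\infty$,
\begin{equation*}
T_n^2 = 2\log\abs{\Om_n} - \log\log\abs{\Om_n} - \log\kl{4\pi} + 2z + o\kl{1}\,.
\end{equation*}
Dividing by $1+\rho$ for $\rho\in[0,1]$ and exponentiating gives, uniformly in $\rho$,
\begin{equation*}
\exp\kl{-\frac{T_n^2}{1+\rho}}
=
\exp\kl{\frac{\log\kl{4\pi}-2z+o\skl{1}}{1+\rho}}\,
\kl{\frac{\log\abs{\Om_n}}{\abs{\Om_n}^2}}^{1/\skl{1+\rho}}
\leq C\kl{z}\,
\kl{\frac{\log\abs{\Om_n}}{\abs{\Om_n}^2}}^{1/\skl{1+\rho}}\,,
\end{equation*}
where $C\skl{z}$ is finite because $1/\skl{1+\rho}\in [1/2,1]$ for $\rho\in[0,1]$. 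Substituting this uniform bound with $\rho=\sabs{\kappa_n\skl{\om,\om'}}\leq 1$ into the comparison inequality reduces the tail sum to exactly the quantity in assumption~\req{rest}, which tends to zero by hypothesis. Combining this with the i.i.d.\ limit gives $\wk\sset{\max\mkl{\Noise_n}\leq T_n}\to\exp\kl{-e^{-z}}$ for every $z\in\R$, proving that $\max\mkl{\Noise_n}$ is of Gumbel type with the asserted normalization.

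The only nontrivial step is the uniform estimate of the exponential factor in terms of $\kl{\log\abs{\Om_n}/\abs{\Om_n}^2}^{1/\skl{1+\rho}}$ on $\rho\in[0,1]$; this is where it is essential that Lemma~\ref{lem:normal} carries no singular $\kl{1-\rho^2}^{-1/2}$ prefactor, which older versions of Berman's inequality would contribute and which would blow up as $\rho\to 1$. Once one has the sharp Li--Shao form of the comparison lemma, the remainder of the argument is essentially bookkeeping of logarithmic terms in $T_n^2$.
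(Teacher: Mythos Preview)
Your proof is correct and follows essentially the same approach as the paper: fix $z$, set $T_n=a\kl{N,\abs{\Om_n}}z+b\kl{N,\abs{\Om_n}}$, expand $T_n^2=2\log\abs{\Om_n}-\log\log\abs{\Om_n}+\mathcal O\kl{1}$, bound $\exp\mkl{-T_n^2/\skl{1+\sabs{\kappa_n\skl{\om,\om'}}}}$ uniformly by a constant times $\mkl{\log\abs{\Om_n}/\abs{\Om_n}^2}^{1/\skl{1+\sabs{\kappa_n\skl{\om,\om'}}}}$, and combine Lemma~\ref{lem:normal} with Proposition~\ref{prop:normal} and assumption~\req{rest}. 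Your write-up is in fact slightly more explicit than the paper's about why the constant $C\kl{z}$ can be taken uniform in $\rho\in[0,1]$.
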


\begin{proof}
Fix some $z \in \R$ and define   $T_n  := a\skl{N,\sabs{\Om_n}}z+ b\skl{N,\sabs{\Om_n}}$.
Then, the definitions of  the normalization sequences
$a\skl{N,\sabs{\Om_n}}$ and $b\skl{N,\sabs{\Om_n}}$ imply that $T_n^2 =  2 \log \abs{\Om_n} - \log \log \abs{\Om_n} + \mathcal O\kl{1} $ as $n \to \infty$.  Hence there
is some constant  $C >0$ and some index $n_0\in \N$,
such that for all $n \geq n_0$, we have
\begin{equation*}
\exp\kl{-\frac{T_n^2}{1+ \sabs{\kappa_{n}\skl{\om, \om'}}}}
\leq
C
\exp\kl{-\frac{\log \mkl{\abs{\Om_n}^2/\log\abs{\Om_n}}}{1+ \sabs{\kappa_n\skl{\om, \om'}}}}
\\
= C
\kl{ \frac{\log \abs{\Om_n}}{\abs{\Om_n}^2}}^{1/\kl{1+\sabs{\kappa_n\skl{\om,\om'}}}} \,.
\end{equation*}
Now let Equation~\req{rest} be satisfied and let $\skl{\eta_n}_{n \in \N}$ be a sequence of standardized normal vectors with independent entries.
Then, the  triangle inequality,  Lemma~\ref{lem:normal},
and the estimate  just established  imply
\begin{equation*}
\mabs{\wk \set{ \max\mkl{\Noise_n}   \leq T_n}}
\leq
\mabs{\wk \set{ \max\kl{\eta_n}   \leq T_n}}
\\
+
\frac{C}{8}
\sum_{\om \neq \om' }
 \abs{\kappa_{n}\skl{\om,\om'}}   \;
\kl{ \frac{\log \abs{\Om_n}}{\abs{\Om_n}^2}}^{1/\kl{1+\sabs{\kappa_n\skl{\om,\om'}}}}
\,.
\end{equation*}
Hence the claim  follows from  Proposition~\ref{prop:normal}
and  Assumption~\req{rest}.
\end{proof}

\subsection{Maxima of Absolute Values}

In the following we derive results
similar to Proposition~\ref{prop:normal} and   Theorem~\ref{thm:normal}
for $\snorm{\Noise_n}_\infty$ in place of $\max \mkl{\Noise_n}$.
The first auxiliary result, Proposition~\ref{prop:abs},
deals with the independent case. It is easy to establish  but nevertheless seems  to be much less  known   than the corresponding result in the normal case. We include a short proof based on the known extreme value distribution of  independent $\chi^2$-distributed random variables.
The second and main  result in this section, Theorem~\ref{thm:abs},  deals with the dependent case.
It is a new contribution and based on a novel   inequality  for comparing the distributions of
$\snorm{\eta_n}_\infty$ and
$\snorm{\Noise_n}_\infty$ (given in Lemma~\ref{lem:abs}).

\begin{proposition} \label{prop:abs}
Let $\mkl{\Noise_n}_{n \in \N}$ be a sequence of  standardized normal  vectors in $\R^{\Om_n}$ having independent entries.
Then $\norm{\Noise_n}_\infty$ is of Gumbel type (see Definition~\ref{def:gumbeltype}) with normalization sequences $a\kl{\chi, \abs{\Om_n}}$, $b\kl{\chi, \abs{\Om_n}}$ defined by   \req{a-abs}, \req{b-abs}.
\end{proposition}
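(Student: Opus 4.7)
The plan is to exploit independence of the entries and reduce the assertion to an explicit asymptotic computation involving the standard normal tail. Since the $\Noise_n(\om)$ are iid $N(0,1)$, the events $\set{\sabs{\Noise_n(\om)} \leq T_n}$ are mutually independent across $\om \in \Om_n$, giving
\begin{equation*}
\wk\set{\snorm{\Noise_n}_\infty \leq T_n}
= \mkl{1 - 2\bar\Phi(T_n)}^{\abs{\Om_n}},
\end{equation*}
where $\bar\Phi(T) := \wk\set{Z > T}$ for $Z \sim N(0,1)$. Setting $T_n := a(\chi,\abs{\Om_n})\, z + b(\chi,\abs{\Om_n})$, the elementary identity $(1 - x_m/m)^m \to e^{-x}$ reduces the claimed Gumbel limit to showing $\abs{\Om_n} \cdot 2\bar\Phi(T_n) \to e^{-z}$ as $n \to \infty$.

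First I would apply Mills' ratio, $\bar\Phi(T) = e^{-T^2/2}/(T\sqrt{2\pi})\,(1 + O(1/T^2))$ as $T \to \infty$, which reduces the task to verifying
\begin{equation*}
\log \abs{\Om_n} + \tfrac{1}{2}\log(2/\pi) - \tfrac{1}{2} T_n^2 - \log T_n \longrightarrow -z.
\end{equation*}
A direct Taylor expansion of $T_n^2$ yields $\tfrac{1}{2}T_n^2 = \log \abs{\Om_n} + z - \tfrac{1}{2}\mkl{\log\log \abs{\Om_n} + \log \pi} + o(1)$, together with $\log T_n = \tfrac{1}{2}\log(2\log \abs{\Om_n}) + o(1)$; plugging these into the display above, the powers of $\log\log \abs{\Om_n}$ and the numerical constants cancel exactly to leave $-z$ in the limit.

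The only subtle point --- and hence the main thing to keep track of --- is the bookkeeping of the lower-order terms: the $\log \pi$ (as opposed to $\log(4\pi)$ in \req{bm}) is precisely what is needed to absorb both the factor $1/T_n$ from Mills' ratio and the extra factor $2$ coming from $2\bar\Phi$, so that the second-order correction in $b(\chi,\abs{\Om_n})$ of \req{b-abs} is forced by these cancellations. An alternative route, to which the paper's remark about ``the known extreme value distribution of independent $\chi^2$-distributed random variables'' alludes, is to view $\snorm{\Noise_n}_\infty^2$ as the maximum of $\abs{\Om_n}$ iid $\chi^2_1$ variables, invoke the classical Gumbel limit for chi-squared extremes (with normalizing sequences $2$ and $2\log \abs{\Om_n} - \log\log \abs{\Om_n} - \log \pi$), and transport the limit back to the square-root scale via the expansion $\sqrt{b_n + 2 z} = \sqrt{b_n} + z/\sqrt{b_n} + O(1/b_n^{3/2})$, which converts the chi-squared constants directly into the stated $a(\chi,\abs{\Om_n})$ and $b(\chi,\abs{\Om_n})$.
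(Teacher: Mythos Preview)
Your proposal is correct. Your main route --- expanding $\mkl{1 - 2\bar\Phi(T_n)}^{\abs{\Om_n}}$ directly via Mills' ratio and checking the second-order cancellations by hand --- differs from the paper's argument, which takes precisely the alternative route you sketch at the end: it passes to $\snorm{\Noise_n}_\infty^2$, invokes the known extreme value distribution for the Gamma family (citing \cite{EmbKluMik97}) to obtain the Gumbel limit on the squared scale with constants $2$ and $2\log\abs{\Om_n} - \log\log\abs{\Om_n} - \log\pi$, and then transports back via the Taylor expansion of the square root together with the fact that $o\mkl{a(\chi,\abs{\Om_n})}$ perturbations do not affect the limit. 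Your direct computation is more self-contained (no external citation needed) and makes transparent exactly why the $\log\pi$ rather than $\log(4\pi)$ appears; the paper's route is shorter on the page since the hard work is absorbed into the cited Gamma result.
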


\begin{proof}
Since  $\Noise_{n}\skl{\om}$ is  standard normally distributed for any
$\om \in \Om_n$, the random variables
$\sabs{\Noise_{n}\skl{\om}}^2$ are  $\chi^2$-distributed with one degree of freedom. The $\chi^2$-distribution  is in turn  a member of the family of    Gamma distributions  $F_{\beta,\gamma}$ corresponding to   $\beta = \gamma = 1/2$.
The asymptotic extreme value distribution  of the Gamma distribution  $F_{\beta,\gamma}$ is known
(see~\cite[page 156]{EmbKluMik97})  and implies
\begin{equation} \label{eq:gumbel-chi}
\lim_{n \to \infty} \wk \set{ \snorm{\Noise_n}_\infty^2
\leq   2z + 2 \log \abs{\Om_n}  - \log\log \abs{\Om_n}  - \log \pi }
\\= \exp\kl{-e^{-z} } \,.
\end{equation}
Moreover, a Taylor series  approximation shows
 \begin{multline}\label{eq:taylor-1}
\sqrt{ 2z + 2 \log \abs{\Om_n}  - \log\log \abs{\Om_n}  - \log \pi}
\\=
a\mkl{\chi,\abs{\Om_n}}z + b\mkl{\chi,\abs{\Om_n}}
+
o\kl{ a\mkl{\chi,\abs{\Om_n}} }
\quad \text{ as } n \to \infty \,.
\end{multline}
Any  $o\mkl{ a\mkl{\chi,\abs{\Om_n}} }$-term can be omitted
when computing extreme value distributions
(see \cite[Theorem~1.2.3]{LeaLinRoo83}), and hence
Equations~\req{gumbel-chi} and~\req{taylor-1} imply the desired
result.
\end{proof}

\begin{remark}\label{rem:normalizing}
The   sequence  $b\mkl{\chi,\abs{\Om_n}} $
used for normalizing  the maximum
$\snorm{\Noise_n}_\infty$ in  Proposition~\ref{prop:abs} is
different from the sequence $b\mkl{N,\abs{\Om_n}}$
used for the  normalization of   $\max\mkl{\Noise_n}$
in Proposition~\ref{prop:normal}.
Indeed, as easily verified,
\begin{equation*}
b\mkl{N, 2 \abs{\Om_n} }
=
b\mkl{\chi, \abs{\Om_n} } +
o\kl{ a\mkl{N,2\abs{\Om_n}} }
\,.\end{equation*}
Again,  the $o\mkl{ a\mkl{N,2\abs{\Om_n}} }$ term can be omitted
in the extreme value distribution and hence
$\snorm{\Noise_n}_\infty$ behaves equal to the  maximum of
$2 \abs{\Om_n}$ (opposed to $\abs{\Om_n}$)
independent standard normally  distributed random  variables.
Using different arguments, this has already been observed
in~\cite[Section~8.3]{Joh11}.\end{remark}

If the entries of $\Noise_n$ are  not independent,
then the result of  Proposition~\ref{prop:abs} does not
necessarily hold true.
If, however,    the correlations   of $\Noise_n$ are sufficiently small,   then, as in the normal case,  we will show  that the same Gumbel law still holds.
This result follows again from  a comparison inequality, now   between the  distributions of $\snorm{\Noise_n}_\infty$
and $\snorm{\eta_n}_\infty$ with some  reference normal vector
$\eta_n$,  to be derived in the following  Lemma~\ref{lem:abs}.
For the sake of simplicity we assume that the vector $\eta_n$
has independent entries; in an analogous manner a similar result could be
derived for comparing two dependent random vectors.

\begin{lemma}\label{lem:abs}
Let $\eta_n, \Noise_n$ be standardized normal  random vectors in $\R^{\Om_n}$.
Assume that the entries  of $\eta_n$ are independent and denote by
$\kappa_{n} \in \R^{\Om_n \times \Om_n}$ the covariance matrix  of $\Noise_n$, having   entries
$\kappa_{n}\skl{\om,\om'} := \cov \mkl{ \Noise_n\kl{\om}, \Noise_n\kl{\om'}}$.
Then, for  all $T_n \in\R$,
\begin{equation} \label{eq:berman-abs}
\abs{\wk \set{  \snorm{\eta_n}_\infty   \leq T_n} -
\wk \set{ \snorm{\Noise_n}_\infty   \leq T_n} }
\leq
\frac{1}{4}
\sum_{\om \neq \om'}
\abs{\kappa_n \skl{\om,\om'}}
 \,
\exp\kl{- \frac{T_n^2}{ 1+\sabs{\kappa_{n}\skl{\om,\om'}}}}  \,.
\end{equation}
\end{lemma}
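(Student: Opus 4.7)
The idea is to reduce the two-sided maxima to one-sided maxima of a doubled vector, so that Lemma~\ref{lem:comparison} (the Li--Shao refinement of Berman's inequality) can be applied directly. Given any standardized normal vector $\zeta_n \in \R^{\Om_n}$, define the doubled vector $\tilde\zeta_n \in \R^{\Om_n \times \set{+,-}}$ by $\tilde\zeta_n(\om,s) := s\,\zeta_n(\om)$. Then $\tilde\zeta_n$ is again a centered standardized normal vector and, crucially,
\begin{equation*}
\sset{\snorm{\zeta_n}_\infty \leq T_n}
= \sset{\max \mkl{\tilde\zeta_n} \leq T_n}\,.
\end{equation*}
Applied to $\eta_n$ and $\Noise_n$, this rewrites the left-hand side of~\req{berman-abs} as $\abs{\wk\set{\max(\tilde\eta_n) \leq T_n} - \wk\set{\max(\tilde\Noise_n) \leq T_n}}$, which is exactly the quantity controlled by Lemma~\ref{lem:comparison} (applied in both directions to turn the positive-part bound into an absolute value).

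The covariance structure of the doubled vectors is
\begin{equation*}
\kappa_{\tilde\eta_n}\skl{(\om,s),(\om',s')} = ss'\,\delta_{\om\om'}\,,
\qquad
\kappa_{\tilde\Noise_n}\skl{(\om,s),(\om',s')} = ss'\,\kappa_n\skl{\om,\om'}\,.
\end{equation*}
I would split the resulting sum over pairs $(\om,s)\neq(\om',s')$ into two cases. For the \emph{diagonal} pairs $\om=\om'$, $s\neq s'$, both covariances equal $-1$, so $\arcsin(\kappa_{\tilde\eta_n}) - \arcsin(\kappa_{\tilde\Noise_n}) = 0$ and this part contributes nothing (here it is essential that Li--Shao removed the singular factor $(1-\rho^2)^{-1/2}$; otherwise this case would be catastrophic). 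For the \emph{off-diagonal} pairs $\om\neq\om'$, one has $\kappa_{\tilde\eta_n} = 0$, $\kappa_{\tilde\Noise_n} = ss'\kappa_n(\om,\om')$ and the relevant $\rho = \sabs{\kappa_n(\om,\om')}$.

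Summing the $\tfrac{1}{4\pi}(\arcsin(\kappa_{\tilde\eta_n}) - \arcsin(\kappa_{\tilde\Noise_n}))_+ \exp\mkl{-T_n^2/(1+\rho)}$ over the four sign pairs $(s,s')\in \set{+,-}^2$, and using oddness of $\arcsin$, the four terms reduce to $2\,(\arcsin(\kappa_n(\om,\om')))_+ + 2\,(-\arcsin(\kappa_n(\om,\om')))_+ = 2\,\sabs{\arcsin(\kappa_n(\om,\om'))}$. The same computation applied in the reverse direction of Lemma~\ref{lem:comparison} yields the same estimate, so the absolute value in~\req{berman-abs} is bounded by
\begin{equation*}
\frac{1}{4\pi}\sum_{\om\neq\om'} 2\,\mabs{\arcsin\mkl{\kappa_n\skl{\om,\om'}}}\,\exp\kl{-\frac{T_n^2}{1+\sabs{\kappa_n\skl{\om,\om'}}}}\,.
\end{equation*}
Applying the elementary inequality $\sabs{\arcsin(x)} \leq \tfrac{\pi}{2}\sabs{x}$ valid for $\sabs{x}\leq 1$ and combining the constants $\tfrac{1}{4\pi}\cdot 2\cdot \tfrac{\pi}{2} = \tfrac{1}{4}$ produces exactly the bound in~\req{berman-abs}.

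The only nontrivial point is the careful bookkeeping of the four sign combinations together with the positive-part cancellation $(\arcsin(x))_+ + (\arcsin(-x))_+ = \sabs{\arcsin(x)}$; without the Li--Shao refinement (with $(\cdot)_+$ in place of $\sabs{\cdot}$ and without the singular denominator) this step would either pick up an extra factor of $2$ or break down at the diagonal pairs where $\rho = 1$.
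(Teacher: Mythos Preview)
Your proposal is correct and follows essentially the same approach as the paper: both double the index set via $\zeta_n \mapsto (\zeta_n,-\zeta_n)$ to convert $\snorm{\cdot}_\infty$ into a one-sided maximum, apply the Li--Shao comparison lemma (Lemma~\ref{lem:comparison}) in both directions, observe that the diagonal contributions cancel because both doubled covariances equal $-1$ there, collapse the four sign combinations into $2\sabs{\arcsin(\kappa_n(\om,\om'))}$, and finish with $\sabs{\arcsin x}\le \tfrac{\pi}{2}\sabs{x}$. Your bookkeeping with the explicit sign index $s\in\set{+,-}$ is slightly cleaner than the paper's block-matrix notation, but the argument is the same.
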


\begin{proof}
The proof  uses the normal comparison  Lemma~\ref{lem:comparison} of Li and Shao  applied to  the strongly
dependent random vectors  $Y_n :=  \mkl{\eta_n, -\eta_n}$
and $X_n :=  \mkl{\Noise_n, -\Noise_n}$
in place of $\eta_n$ and  $\Noise_n$.
To that end, we first note that obviously
$\mset{ \sabs{\Noise_n}  \leq T_n} = \mset{ X_n  < T_n}$  and
$ \mset{ \sabs{\eta_n}   \leq T_n} =  \mset{Y_n < T_n}$.
Moreover, the covariance matrices of $Y_n$ and
$X_n$ are block matrices of the form
\begin{equation*}
\cov\skl{Y_n }
=
\begin{pmatrix}
\phantom{-} \mathbf{I}_n &  - \mathbf{I}_n \\
-\mathbf{I}_n &   \phantom{-} \mathbf{I}_n
\end{pmatrix}
\quad \text{ and } \quad
\cov\skl{X_n}
=
\begin{pmatrix}
\phantom{-} \kappa_n &  - \kappa_n \\
-\kappa_n  &   \phantom{-} \kappa_n
\end{pmatrix}
\,,
\end{equation*}
where $\kappa_n = \cov\mkl{\Noise_n}$ denotes the covariance matrix of $\Noise_n$ and $\mathbf{I}_n = \cov\mkl{Y_n}$ is the identity matrix in $\R^{\Om_n \times \Om_n}$.
Now applying Lemma \ref{lem:comparison} with
$Y_n$ and
$X_n$ in place of  $ \eta_n$ and $\Noise_n$ yields
\begin{multline*}
\wk \set{ \norm{\eta_n}_\infty   \leq T_n }
-
\wk \set{ \norm{\Noise_n}_\infty  \leq T_n }
\\
\begin{aligned}
=&
\wk \set{ \max  \kl{\eta_n, -\eta_n} \leq T_n }
-
\wk \set{ \max  \kl{\Noise_n, -\Noise_n}   \leq T_n }
\\
\leq
&
\frac{1}{2\pi}
\sum_{\om \neq \om'}
\kl{ \skl{-\arcsin\skl{\kappa_n\skl{\om,\om'}}}_+
+ \skl{\arcsin\skl{\kappa_n\skl{\om,\om'}}}_+ }
 \exp\kl{- \frac{T_n^2}{1+\sabs{\kappa_n\skl{\om,\om'}}}}
\\
=&
\frac{1}{2\pi}\sum_{\om \neq \om'}
\sabs{\arcsin\skl{-\kappa_n\skl{\om,\om'}}} \;
\exp\kl{- \frac{T_n^2}{1+\sabs{\kappa_n\skl{\om,\om'}}}}
\,.
\end{aligned}
\end{multline*}
Here for the  first estimate we used that the two sums over the diagonal blocks   give the same value,  that the same is the case for the two off-diagonal blocks,
that all terms having   $\om = \om'$ cancel and
that  $\cov\skl{X_n}\kl{\om, \om'} = 0$  for  $\om \neq  \om'$.
Interchanging the roles of  $\Noise_n$ and $\eta_n$ yields the same estimate for
 $\wk \mset{  \snorm{\Noise_n}_\infty \leq T_n} - \wk \mset{ \snorm{\eta_n}_\infty \leq T_n}$  and hence implies
 \begin{multline}   \label{eq:berman-abs-asin}
\abs{\wk \set{  \norm{\eta_n}_\infty   \leq T_n } -
\wk \set{ \norm{\Noise_n}_\infty   \leq T_n } }
\\\leq
\frac{1}{2\pi}\sum_{\om \neq \om'}
\mabs{\arcsin\skl{\kappa_n\skl{\om,\om'}}} \;
\exp\kl{- \frac{T_n^2}{1+\sabs{\kappa_n\skl{\om,\om'}}}} \,.
\end{multline}
Finally, the estimate  $\xsabs{\arcsin y } \leq \xsabs{y} \cdot \pi / 2$
for  $y \in \ekl{-1,1}$ and inequality~\req{berman-abs-asin} imply the claimed inequality~\req{berman-abs}.
\end{proof}

The following theorem is the main result of this section and the key for most results established in this paper.

\begin{theorem}\label{thm:abs}
Let $\mkl{\Noise_n}_{n \in \N}$ be a sequence of standardized normal vectors in $\R^{\Om_n}$ having covariance matrices   $\kappa_{n} \in \R^{\Om_n \times \Om_n}$   satisfying Equation~\req{rest}.
Then $\norm{\Noise_n}_\infty$ is of Gumbel type (see Definition~\ref{def:gumbeltype}) with normalization constants $a\kl{\chi, \abs{\Om_n}}$, $b\kl{\chi, \abs{\Om_n}}$ defined by   \req{a-abs}, \req{b-abs}.
\end{theorem}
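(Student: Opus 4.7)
The plan is to mimic the proof of Theorem~\ref{thm:normal} line for line, but substitute Lemma~\ref{lem:abs} for Lemma~\ref{lem:normal} (to handle absolute values) and Proposition~\ref{prop:abs} for Proposition~\ref{prop:normal} (to supply the independent benchmark). The point is that the comparison inequality \eqref{eq:berman-abs} has exactly the same structural form as \eqref{eq:berman-norm}, differing only in the constant in front, so the argument transfers verbatim once the normalization constants are correctly matched.

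Concretely, I would fix $z \in \R$ and set $T_n := a(\chi,|\Om_n|) z + b(\chi,|\Om_n|)$ with $a(\chi,|\Om_n|)$, $b(\chi,|\Om_n|)$ as in \eqref{eq:a-abs}, \eqref{eq:b-abs}. A direct expansion using the definitions of these sequences gives
\begin{equation*}
T_n^2 = 2\log|\Om_n| - \log\log|\Om_n| - \log\pi + 2z + o(1) \quad \text{as } n \to \infty.
\end{equation*}
Consequently, there exist a constant $C>0$ and $n_0 \in \N$ such that for every $n \geq n_0$ and every pair $(\om,\om') \in \Om_n^2$,
\begin{equation*}
\exp\kl{-\frac{T_n^2}{1+\sabs{\kappa_n\skl{\om,\om'}}}}
\leq C \kl{\frac{\log |\Om_n|}{|\Om_n|^2}}^{1/\kl{1+\sabs{\kappa_n\skl{\om,\om'}}}},
\end{equation*}
exactly as in the proof of Theorem~\ref{thm:normal}.

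Now let $(\eta_n)_{n \in \N}$ be a reference sequence of standardized normal vectors in $\R^{\Om_n}$ with independent entries. By Proposition~\ref{prop:abs}, $\wk\set{\snorm{\eta_n}_\infty \leq T_n} \to \exp(-e^{-z})$ as $n \to \infty$. Applying Lemma~\ref{lem:abs} and the exponential bound just established yields
\begin{equation*}
\mabs{\wk \set{  \snorm{\eta_n}_\infty   \leq T_n} -
\wk \set{ \snorm{\Noise_n}_\infty   \leq T_n}}
\leq \frac{C}{4} \sum_{\om \neq \om'} \abs{\kappa_n\skl{\om,\om'}}
\kl{\frac{\log |\Om_n|}{|\Om_n|^2}}^{1/\kl{1+\sabs{\kappa_n\skl{\om,\om'}}}}.
\end{equation*}
By the standing assumption \eqref{eq:rest}, the right-hand side tends to $0$. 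Combining these two facts by the triangle inequality gives $\wk\set{\snorm{\Noise_n}_\infty \leq T_n} \to \exp(-e^{-z})$, which is precisely the Gumbel-type convergence with the claimed normalization sequences.

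I do not foresee a real obstacle: the heavy lifting has already been done in Lemma~\ref{lem:abs} (which is the genuinely new ingredient, absorbing the absolute values via the doubled vector $(\Noise_n,-\Noise_n)$) and in Proposition~\ref{prop:abs} (which supplies the limit for the independent benchmark). The only bookkeeping step that requires care is the Taylor expansion yielding the asymptotic form of $T_n^2$, because it is exactly this expansion that converts the exponential in \eqref{eq:berman-abs} into the summand appearing in the hypothesis \eqref{eq:rest}; but this is routine.
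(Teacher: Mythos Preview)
Your proposal is correct and matches the paper's own proof essentially verbatim: the paper simply says the argument is analogous to that of Theorem~\ref{thm:normal}, replacing Proposition~\ref{prop:normal} and Lemma~\ref{lem:normal} by Proposition~\ref{prop:abs} and Lemma~\ref{lem:abs}. Your write-up is in fact more detailed than the paper's, spelling out the expansion of $T_n^2$ and the triangle-inequality step that the paper leaves implicit.
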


\begin{proof}
This is analogous to the proof of Theorem~\ref{thm:normal}.
Instead of  Proposition~\ref{prop:normal}  and
Lemma~\ref{lem:normal} one now
uses  Proposition~\ref{prop:abs}  and
Lemma~\ref{lem:abs}.
\end{proof}

Equation~\req{rest} provides a sufficient condition
for the extreme value results of
Theorems~\ref{thm:normal}  and~\ref{thm:abs} to hold.
However, given a sequence $\mkl{\Noise_n}_{n \in \N}$ of
normal vectors with covariance matrices $\kappa_n$, it is  not completely obvious  whether or not~\req{rest} is satisfied.
In Section~\ref{sec:evd-frame} we verified
that~\req{rest} indeed holds in the case where
$\Noise_n =  \sinner{\base_{\om}^{n}}{\noise_n}$ are coefficients of  standardized  normal random vectors  $\noise_n$ having independent entries with respect to an asymptotically stable family
of frames $\mkl{\base_{\om}^{n}: \om\in\Om_n}$.

Occasionally we will make use of the following classical
result due to Sidak~\cite{Sid67} for bounding the
maximum of the magnitudes of  dependent random vectors by the maximum of the magnitudes of independent ones.

\begin{lemma}[Sidak's Inequality]\label{lem:sidak}
Let $\eta_n, \Noise_n$ be standardized normal  random vectors in $\R^{\Om_n}$ and assume  that the entries  of $\eta_n$ are independent.
Then,
\begin{equation} \label{eq:sidak}
\kl{\forall T  \in\R}
\qquad
\wk\set{  \norm{\Noise_n}_\infty   \leq T  }
\geq
\wk\set{ \norm{\eta_n}_\infty   \leq T }  \,.
\end{equation}
\end{lemma}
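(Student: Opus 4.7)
The plan is to invoke the classical inequality of Sidak~\cite{Sid67}: for any centered Gaussian vector $\kl{X_1,\dots,X_m}$ with unit-variance marginals and any positive thresholds $T_1,\dots,T_m$,
\begin{equation*}
\wk\set{\sabs{X_1}\leq T_1,\dots,\sabs{X_m}\leq T_m}
\;\geq\;
\prod_{i=1}^m \wk\set{\sabs{X_i}\leq T_i}\,.
\end{equation*}
Applied to $\Noise_n$ with $T_i = T$ for every $i$, and using that both $\Noise_n\skl{\om}$ and $\eta_n\skl{\om}$ are $N\skl{0,1}$-distributed, the right-hand side equals $\prod_{\om\in\Om_n}\wk\set{\sabs{\eta_n\skl{\om}}\leq T}$, which in turn equals $\wk\set{\snorm{\eta_n}_\infty\leq T}$ by independence of the entries of $\eta_n$. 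This yields \req{sidak} in one line.

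If one prefers to reproduce the proof of Sidak's inequality rather than merely cite it, the standard strategy is induction on $m$ together with a reduction to the two-dimensional case. For $m=2$ one shows that $\rho \mapsto \wk\set{\sabs{X_1}\leq T_1,\sabs{X_2}\leq T_2}$ is nondecreasing for $\rho\in [0,1]$ (and by the symmetry $X_2 \mapsto -X_2$ also for the full range), by differentiating the bivariate Gaussian density under the integral sign and verifying that the resulting expression is nonnegative; integrating from the independent case $\rho=0$ then gives the two-dimensional inequality. The induction step conditions on $\skl{X_2,\dots,X_m}$: conditionally, $X_1$ is Gaussian with some conditional mean $\mu$ and variance $\sigma^2\leq 1$, and the elementary fact that $\mu \mapsto \wk\set{\sabs{\mu+\sigma Z}\leq T}$ is maximized at $\mu=0$ allows one to peel off the conditioning factor by factor.

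I expect the main technical step, should one insist on a self-contained proof, to be the bivariate correlation monotonicity; the rest is bookkeeping. Since the lemma statement already cites~\cite{Sid67}, the most economical presentation is to quote Sidak's multivariate inequality and apply the one-line deduction indicated in the first paragraph.
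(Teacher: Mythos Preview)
Your main proposal---citing Sidak's multivariate inequality and specializing to equal thresholds---is correct and is exactly what the paper does; its entire proof reads ``See~\cite[Corollary~1]{Sid67}.''

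One caveat on your optional self-contained sketch: the induction step as written does not go through. The fact that $\mu\mapsto\wk\sset{\sabs{\mu+\sigma Z}\leq T}$ is maximized at $\mu=0$ yields an \emph{upper} bound on the conditional probability $\wk\sset{\sabs{X_1}\leq T\mid X_2,\dots,X_m}$, whereas the inequality you want, namely
\[
\ew\Bigl[\wk\sset{\sabs{X_1}\leq T\mid X_2,\dots,X_m}\,\mathbf 1_{\sset{\sabs{X_2}\leq T,\dots,\sabs{X_m}\leq T}}\Bigr]\;\geq\;\wk\sset{\sabs{X_1}\leq T}\,\wk\sset{\sabs{X_2}\leq T,\dots,\sabs{X_m}\leq T},
\]
requires a \emph{lower} bound (or rather a positive-association argument). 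Sidak's actual proof interpolates the correlations between $X_1$ and $(X_2,\dots,X_m)$ by a parameter $\lambda\in[0,1]$ and shows the joint probability is monotone along this path, which then feeds the induction. Since you correctly flag the citation as the economical route, this does not affect the validity of your primary answer.
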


\begin{proof}
See \cite[Corollary~1]{Sid67}.
\end{proof}

Note that a similar result also holds for the
maxima without the absolute values,
which bounds the probability
$\wk\mset{  \max \mkl{\Noise_n}   \leq T  } $ of dependent standardized normal vectors from below by the probability $\wk\mset{  \max\skl{\eta_n}   \leq T } $
of independent ones. This one-sided estimate,  however, requires the covariances of $\Noise_n$ being nonnegative. It is  known as  Slepian's Lemma
and has first been derived in~\cite{Sle62}.
Interestingly, Slepian's Lemma immediately follows from the normal comparison Lemma~\ref{lem:comparison}, whereas this seems not to
be the case for Sidak's two sided inequality.

\section{Remaining Proofs}

\subsection{Proof of Proposition~\ref{prop:risk}}
 \label{ap:risk}
As already noted in~\cite[page~558]{Mal09},
for the universal  thresholds $\sigma \sqrt{2 \log \abs{\Om_n}}$
this result  easily follows by adapting the original proof
of ~\cite{DonJoh94} (see also \cite[Section~8.3]{Joh11} and
\cite[Theorem~11.7]{Mal09})
from the orthonormal  case to  the frame case.
Indeed,  as shown below a  similar proof can be
made for the extreme value thresholds $T_n
=T\mkl{\alpha_n, \abs{\Om_{n}}}$ defined by Equation  \req{alphathresh}.

After rescaling we  may assume  without
loss of generality that $\sigma =1$.
Recall that the dual frame
$\mkl{\dbase_{\om}^{n}: \om\in\Om_n}$
 has upper frame bound $1/a_n$,
 that $\Base_n^+ \Base_n = \operatorname{Id}$ is the identity on $\R^{\I_n}$,
and that $\Base_n \Base_n^+ = P_{\range\skl{\Base_n}}$
equals the orthogonal projection onto the range $\range\skl{\Base_n} \subset \R^{\Om_n}$ of
the analysis operator $\Base_n \colon \R^{\I_n} \to
\R^{\Om_n}$. Moreover, we define the parameter $\Para_n =\Base_n\signal_n$ and the
data $\Data_n = \Base_n\data_n$
as in~\req{DataPara}.
Then we can estimate
\begin{multline*}\hspace{3em}
\ew \kl{\mnorm{\signal_n -  \Base_n^+
\circ \softop \kl{\Base_n\data_n,
T\mkl{\alpha_n, \abs{\Om_{n}}}}}^2 }
\\
\begin{aligned}
& =
\ew \kl{\mnorm{\Base_n^+ \Para_n -  \Base_n^+
\circ \softop \kl{\Base_n\data_n,
T\mkl{\alpha_n, \abs{\Om_{n}}}}}^2 }
\\
& =
\ew \kl{\mnorm{\Base_n^+ \Base_n \Base_n^+ \kl{\Para_n - \softop \kl{ \Data_n, T\mkl{\alpha_n, \abs{\Om_{n}}}}}}^2 }
\\
& \leq
\frac{1}{a_n}
\ew \kl{\mnorm{\mathrm{P}_{\range\skl{\Phi_n}} \kl{\Para_n - \softop \kl{ \Data_n, T\mkl{\alpha_n, \abs{\Om_{n}}}}}}^2 }
\\ &=
\frac{1}{a_n}
\sum_{\om \in \Omega_n}
\ew \kl{ \mabs{ \Para_n\skl{\om} - \soft\mkl{\Data_n\skl{\om}, T\mkl{\alpha_n, \abs{\Om_{n}}}}}^2 }\,.\hspace{4em}
\end{aligned}
\end{multline*}
Now we can proceed similar to~\cite{DonJoh94} (see also~\cite{Joh11,Mal09}) to estimate the  mean square errors
$\ew \mkl{\sabs{ \Para_n\skl{\om} -  \soft\mkl{\Data_n\skl{\om},T \mkl{\alpha_n, \abs{\Om_n}}}}^2}$ of one-dimensional soft-thresholding.

To that end we use the  risk estimate of~\cite[Section~2.7]{Joh11}
for one-dimensional soft-thresholding, which states the following:
If  $y  \sim N\kl{\mu,1}$ is a normal random variable with mean  $\mu \in \R$ and unit variance, then
\begin{equation}\label{eq:risk-1d}
\kl{\forall T >0}
\qquad
\ew \kl{ \abs{\mu - \soft \kl{y, T} }^2}
\leq e^{-T^2/2} + \min \set{1+T^2, \mu^2}\,.
\end{equation}
For our purpose we  apply the risk estimate~\req{risk-1d} with
threshold $T = T \mkl{\alpha_n, \abs{\Om_n}}$.
The definition of the threshold $T \mkl{\alpha_n, \abs{\Om_n}}$  in \req{alphathresh} immediately
yields the estimate
\begin{equation*}
\frac{T \mkl{\alpha_n, \abs{\Om_n}}^2}{2}
\geq
\log\abs{\Om_n}
-
\log \log \mkl{1/\skl{1-\alpha_n}}
-
\frac{\log\log \abs{\Om_n}
+ \log \pi}{2}\,.
\end{equation*}
Inserting these estimates in \req{risk-1d}  applied with the  random variables
$y = \Data_n\skl{\om}$ having mean values
$\mu = \Para_n\skl{\om}$ and using the
assumption $T \mkl{\alpha_n, \abs{\Om_n}} \leq \sqrt{2 \log\abs{\Om_n}}$ yields
 \begin{multline*}
\ew \kl{ \mabs{ \Para_n\skl{\om} -
\soft\mkl{\Data_n\skl{\om},T \mkl{\alpha_n, \abs{\Om_n}}}}^2 }
\\ \leq
\frac{\log \kl{1/\skl{1-\alpha_n}} \sqrt{\pi \log \abs{\Om_n} }}{\abs{\Om_n}}\,
+
\kl{1+ 2\log \abs{\Om_n}}
\min\set{1,  \sabs{ \Para_n\skl{\om}}^2 }\,.
\end{multline*}
Finally, summing over all
$\om \in \Om_n$ shows~\req{oracle}.

 \subsection{Proof of Theorem~\ref{thm:ti}}
 \label{ap:thm:ti}

Let $\eta = \kl{\eta\skl{t}: t\in [0,1]}$ denote a white noise
process on $[0,1]$ and consider the periodic continuous domain wavelets $\wave_{j,b}\skl{t} = 2^{j/2} \wave\mkl{2^{j} \kl{t-b} }$. We then define the  random vectors $X_n$
as inner products
\begin{equation*}
\skl{\forall j=0, \dots, \log n - 1}
\skl{\forall \ell=0, \dots,  2^jn -1 }
\quad
X_n\skl{j,\ell}
:=
\inner{\wave_{j, 2^{j}\ell/n}}{\eta} \,.
\end{equation*}
Hence the random variables $X_n\skl{j,\ell}$ are coefficients  of the white noise process $\eta$ with respect to a discrete wavelet transform, that is oversampled by factor $n$ at every scale.
Comparing this with the  definition of the translation invariant wavelet transform we see that the translation invariant wavelet coefficients $\Wave_{n,n} \noise_n$
are a subset of the elements of  $X_n$.
Hence we have
\begin{equation} \label{eq:ti-aux1}
\kl{\forall T >0}
\qquad
\wk\set{\snorm{\Wave_{n,n}\noise_n }_{\infty} \leq T }
\geq
\wk\set{\snorm{X_n}_{\infty} \leq T } \,.
\end{equation}

We proceed by computing the correlations of $X_n\skl{j,\ell}$
for some fixed scale index. Since $\eta$ is a white noise process, the definition of $X_n$ and some elementary manipulations shows that, for all $j \in \set{0, \dots, \log n -1}$ and all indices  $\ell$, $\ell' \in \sset{ 0, \dots  , 2^{j}n - 1}$, we have
\begin{multline*}
\cov\mkl{X_n\skl{j,\ell}, X_n\skl{j,\ell'} }
=
\minner{\wave_{j,\ell}}{\wave_{j,\ell'}}
=
2^{j}\int_{\R}
\wave\skl{2^j t - \ell/n}
\wave\skl{2^j t - \ell'/n}
dt
\\=
2^{j}\int_{\R}
\wave\skl{2^j t}
\wave\skl{2^j t -  \skl{\ell'- \ell}/n} dt
=
\int_{\R} \wave\skl{-t} \wave\skl{\skl{\ell- \ell'}/n-t} dt
=
\skl{\bar\wave\ast  \wave } \skl{\skl{\ell-\ell'}/n}\,.
\end{multline*}

Next we construct  a random vector $Y_n$ with
the same index set and pointwise smaller correlations.
To that end, for every given $j$, we group the index set
$\sset{0, \dots 2^jn -1}$ into $2^j$ blocks
$B_{j,k} = \sset{kn, \dots, \kl{k+1}n -1}$ for any
$k \in \sset{0, \dots, 2^j-1}$.
We  denote
$\kappa := \bar\wave \ast \wave$ and define the matrix
\begin{equation*}
	\bar{\kappa}_{n}
	\skl{\skl{j,\ell},\skl{j',\ell'}}
	:=
	\begin{cases}
		\kappa \kl{ \skl{\ell-\ell'}/n}
		& \text{ if } j=j' \text{ and } \skl{\ell,\ell'} \in
		\bigcup_{k}B_{j,k}\times B_{j,k}\\
		0 &\text{ otherwise } \,.
	\end{cases}
\end{equation*}
 Hence we have $\bar{\kappa}_{n} \kl{\skl{j,\ell},\skl{j',\ell'}}
 =\cov\mkl{X_n\skl{j,\ell}, X_n\skl{j',\ell'} }$
 if $j = j'$ and the indices $\ell,\ell'$ are in the  same block $B_{j,k}$,
 and the correlations  of $\bar{\kappa}_{n}$ are zero otherwise.
 Moreover $\bar{\kappa}_{n}$ is obviously symmetric and positive semi-definite and hence there exists a standardized normal   random vector $Y_n$ whose covariance matrix is given  by $\bar{\kappa}_{n}$.
  By construction of $\bar\kappa_n$, the covariances
 $\sabs{\cov\skl{X_n\skl{j,\ell}, X_n\skl{j,\ell'} }}$ pointwise dominate the covariances
 $\mabs{\bar{\kappa}_{n}\skl{\skl{j,\ell},\skl{j',\ell'}}}$.
 Hence,  Lemma~\ref{lem:sidak} implies
 \begin{equation} \label{eq:ti-aux2}
\kl{\forall T >0}
\qquad
\wk\set{\snorm{X_{n}}_{\infty} \leq T }
\geq
\wk\set{\snorm{Y_n}_{\infty} \leq T } \,.
\end{equation}
Inspecting Equations~\req{ti-aux1} and~\req{ti-aux2}
shows that it remains to compute the asymptotic
distribution of $\snorm{Y_n}_{\infty}$.

To that end recall that
$\cov\kl{X_n\skl{j,\ell}, X_n\skl{j,\ell'}} =
\kappa \kl{ \skl{\ell-\ell'}/n}  $ are densely sampled
values of the autocorrelation function of the mother
wavelet. This in particular implies that any block in $Y_{n}$ has the same distribution. Moreover, due to the independence of the blocks this yields
\begin{align*}
	\wk \set{\snorm{Y_n}_\infty \leq T}
	&=
	\wk \set{\max{\sabs{Y_n \skl{0,\ell}:
	\ell = 0, \dots,  n - 1 }  \leq  T} }^n
	\\
	&=
	\kl{ 1- \wk \set{\max{\sabs{Y_n \skl{0,\ell}:
	\ell = 0, \dots,  n - 1 }}  >  T} }^n
	\\
	&=
	\kl{ 1-
	\wk \set{\max{\sabs{\sinner{ \wave_{0,\ell/n}}{\eta}}:
	\ell = 0, \dots,  n - 1 }}  >  T }^n
	\\
	&=
	\kl{ 1-
	\wk \set{\max{\abs{X\skl{\ell/n}}:
	\ell = 0, \dots,  n - 1 }}  >  T }^n
	\,.
\end{align*}
Here $X = \set{X\skl{t}: t\in [0,1]}$ is defined by
$X\skl{t} := \inner{ \wave_{0,t}}{\eta}$.
One easily verifies that  $X$ is a mean square differentiable normal  process  having  covariance function $\kappa \skl{t}$.
Moreover the vector  $Y_n \skl{0,\ell} = X\skl{\ell/n}$ consist of $n$ equidistant  values of that process  inside
the unit interval.
Hence for any sequence of thresholds $T_n$ that tends to infinity as $n \to \infty$   in a sufficiently slowly manner, one has the asymptotic relations (which follow from standard result of
continuous extreme value theory  \cite{LeaLinRoo83})
\begin{align*}
\wk \set{\max \set{\abs{X \skl{\ell/n}}:
\ell = 0, \dots,  n - 1 }  >  T_n}
&\sim
\wk \set{\max \set{\abs{X\skl{t}} : t \in [0,1]}  >  T_n}
\\
\wk \set{\max \set{\abs{X\skl{t}} : t \in [0,1]}  >  T_n}
&\sim
2\wk \set{\max \set{X\skl{t} : t \in [0,1]}  >  T_n}
\\
\wk \set{\max \set{X\skl{t} : t \in [0,1]}  >  T_n}
&
\sim
c/\skl{2\pi}\exp\kl{-T_n^2/2}
\,.
\end{align*}
Now fix any $z \in \R$ and define the sequence
$T_n  := \mkl{2\skl{\log n +z + 2\log\skl{c/\pi}}}^{1/2}$. Then the definition of $T_n$ immediately yields  $ \exp\mkl{-T_n^2/2}  =  \pi/\kl{cn}
\exp\skl{-z}$.
Consequently, by collecting the above estimates, we have
\begin{equation*}
\lim_{n \to \infty}
\wk \set{\norm{Y_n}_\infty \leq T_n}
=
\lim_{n \to \infty}
\kl{1 - \frac{c}{\pi} e^{-T_n^2/2}}^n
=
\lim_{n \to \infty}
\kl{1 - \frac{e^{-z}}{n} }^n
=
\exp\kl{-e^{-z}  } \,.
\end{equation*}

Finally, a simple Taylor series approximation of the square
root shows the asymptotic relation
\begin{equation*}
T_n
=
\sqrt{2\log n} + \frac{x+ \log \kl{c/\pi}}{\sqrt{2\log n}} + o \kl{1/\sqrt{2\log n}} \,.
\end{equation*}
Recalling, for the last time, that $o \mkl{1/a_n}$ terms can
be omitted in the rescaling of extreme value distributions finally shows
\begin{equation*}
	\wk \set{\norm{Y_n}_\infty \leq \sqrt{2\log n} + \frac{x+ \log \kl{c/\pi}}{\sqrt{2\log n}}}
	\to \exp\kl{-e^{-z}  } \,,
\end{equation*}
and concludes the proof.

\providecommand{\noopsort}[1]{}


\begin{thebibliography}{10}

\bibitem{AntBigSap01}
A.~Antoniadis, J.~Bigot, and T.~Sapatinas.
\newblock Wavelet estimators in nonparametric regression: A comparative
  simulation study.
\newblock {\em J. Stat. Softw.}, 6(6):1--83, 2001.

\bibitem{AntFan01}
A.~Antoniadis and J.~Fan.
\newblock Regularization of wavelet approximations.
\newblock {\em J. Amer. Statist. Assoc.}, 96(455):939--967, 2001.
\newblock With discussion and a rejoinder by the authors.

\bibitem{BerWel02}
K.~Berkner and R.~O. Wells, Jr.
\newblock Smoothness estimates for soft-threshold denoising via
  translation-invariant wavelet transforms.
\newblock {\em Appl. Comput. Harmon. Anal.}, 12(1):1--24, 2002.

\bibitem{BorNil07}
L.~Borup and M.~Nielsen.
\newblock Frame decomposition of decomposition spaces.
\newblock {\em J. Fourier Anal. Appl.}, 13(1):39--70, 2007.

\bibitem{Bre95}
L.~Breiman.
\newblock Better subset regression using the nonnegative garrote.
\newblock {\em Technometrics}, 37(4):373--384, 1995.

\bibitem{BroLev07}
L.~D. Brown and L.~Brown.
\newblock Variance estimation in nonparametric regression via the difference
  sequence method.
\newblock {\em Ann. Statist.}, 35(5):2219--2232, 2007.

\bibitem{Cai99}
T.~Cai.
\newblock Adaptive wavelet estimation: A block thresholding and oracle
  inequality approach.
\newblock {\em Ann. Statist.}, 27(3):898--924, 1999.

\bibitem{CaiZho09}
T.~Cai and H.~Zhou.
\newblock A data-driven block thresholding approach to wavelet estimation.
\newblock {\em Ann. Statist.}, 37(2):569--595, 2009.

\bibitem{CanDemDonYin06}
E.~Cand{\`e}s, L.~Demanet, D.~Donoho, and L.~Ying.
\newblock Fast discrete curvelet transforms.
\newblock {\em Multiscale Model. Simul.}, 5:861--899, 2006.

\bibitem{CanDon04}
E.~J. Cand{\`e}s and D.~L. Donoho.
\newblock New tight frames of curvelets and optimal representations of objects
  with piecewise {$C^2$} singularities.
\newblock {\em Comm. Pure Appl. Math.}, 57(2):219--266, 2004.

\bibitem{CanDon05a}
E.~J. Cand{\`e}s and D.~L. Donoho.
\newblock Continuous curvelet transform. {I}. {R}esolution of the wavefront
  set.
\newblock {\em Appl. Comput. Harmon. Anal.}, 19(2):162--197, 2005.

\bibitem{CanDon05b}
E.~J. Cand{\`e}s and D.~L. Donoho.
\newblock Continuous curvelet transform. {II}. {D}iscretization and frames.
\newblock {\em Appl. Comput. Harmon. Anal.}, 19(2):198--222, 2005.

\bibitem{CheFadSta10a}
C.~Chesneau, J.~Fadili, and J.-L. Starck.
\newblock Stein block thresholding for image denoising.
\newblock {\em Appl. Comput. Harmon. Anal.}, 28(1):67--88, 2010.

\bibitem{CheFadSta10}
C.~Chesneau, J.~Fadili, and J.-L.-Luc Starck.
\newblock Stein block thresholding for wavelet-based image deconvolution.
\newblock {\em Electron. J. Stat.}, 4:415--435, 2010.

\bibitem{Chr08}
O.~Christensen.
\newblock {\em Frames and Bases}.
\newblock Applied and Numerical Harmonic Analysis. Birkh\"auser Boston Inc.,
  Boston, MA, 2008.
\newblock An introductory course.

\bibitem{Coh03}
A.~Cohen.
\newblock {\em Numerical Analysis of Wavelet Methods}, volume~32 of {\em
  Studies in Mathematics and its Applications}.
\newblock North-Holland Publishing Co., Amsterdam, 2003.

\bibitem{CohDauFea92}
A.~Cohen, I.~Daubechies, and J.-C. Feauveau.
\newblock Biorthogonal bases of compactly supported wavelets.
\newblock {\em Comm. Pure Appl. Math.}, 45(5):485--560, 1992.

\bibitem{CoiDon95}
R.~R. Coifman and D.~L. Donoho.
\newblock Translation-invariant de-noising.
\newblock In A.~Antoniadis and G.~Oppenheim, editors, {\em Lecture Notes in
  Statistics}, volume 103. Springer, 1995.

\bibitem{Dau88}
I.~Daubechies.
\newblock Orthonormal bases of compactly supported wavelets.
\newblock {\em Comm. Pure Appl. Math.}, 41(7):909--996, 1988.

\bibitem{Dau92}
I.~Daubechies.
\newblock {\em Ten Lectures on Wavelets}.
\newblock SIAM, Philadelphia, PA, 1992.

\bibitem{DehFer06}
L.~de~Haan and A.~Ferreira.
\newblock {\em Extreme value theory}.
\newblock Springer Series in Operations Research and Financial Engineering.
  Springer, New York, 2006.

\bibitem{DetMunWag98}
H.~Dette, A.~Munk, and T.~Wagner.
\newblock Estimating the variance in nonparametric regression---what is a
  reasonable choice?
\newblock {\em J. R. Stat. Soc. Ser. B Stat. Methodol.}, 60(4):751--764, 1998.

\bibitem{DoVet03}
M.~N. Do and M.~Vetterli.
\newblock Contourlets.
\newblock In P.~Monk C.~K.~Chui and L.~Wuytack, editors, {\em Beyond Wavelets},
  volume~10 of {\em Studies in Computational Mathematics}, pages 83--105.
  Elsevier, 2003.

\bibitem{DoVet05}
M.~N. Do and M.~Vetterli.
\newblock The contourlet transform: an efficient directional multiresolution
  image representation.
\newblock {\em IEEE Trans. Image Process.}, 14(12):2091--2106, dec. 2005.

\bibitem{Don95}
D.~L. Donoho.
\newblock De-noising by soft-thresholding.
\newblock {\em IEEE Trans. Inf. Theory}, 41(3):613--627, 1995.

\bibitem{DonJoh94}
D.~L. Donoho and I.~M. Johnstone.
\newblock Ideal spatial adaptation by wavelet shrinkage.
\newblock {\em Biometrika}, 81(3):425--455, 1994.

\bibitem{DonJoh95}
D.~L. Donoho and I.~M. Johnstone.
\newblock Adapting to unknown smoothness via wavelet shrinkage.
\newblock {\em J. Amer. Statist. Assoc.}, 90(432):1200--1224, 1995.

\bibitem{DonJohKerPic96}
D.~L. Donoho, I.~M. Johnstone, G.~B. Kerkyacharian, and D.~Picard.
\newblock Density estimation by wavelet thresholding.
\newblock {\em Ann. Statist.}, 24(2):508--539, 1996.

\bibitem{EmbKluMik97}
P.~Embrechts, C.~Kluppelberg, and T.~Mikosch.
\newblock {\em Modelling Extremal Events for Insurance and Finance}.
\newblock Springer, Berlin, 1997.

\bibitem{Gao98}
H.~Gao.
\newblock Wavelet shrinkage denoising using the non-negative garrote.
\newblock {\em J. Comput. Graph. Statist.}, 7(4):469--488, 1998.

\bibitem{GuoKutLab06}
K.~Guo, G.~Kutyniok, and D.~Labate.
\newblock Sparse multidimensional representations using anisotropic dilation
  and shear operators.
\newblock In {\em Wavelets and splines: {A}thens 2005}, Mod. Methods Math.,
  pages 189--201. Nashboro Press, Brentwood, TN, 2006.

\bibitem{GuoLab07}
K.~Guo and D.~Labate.
\newblock Optimally sparse multidimensional representation using shearlets.
\newblock {\em SIAM J. Math. Anal.}, 39(1):298--318, 2007.

\bibitem{GuoLabLimWeiWil06}
K.~Guo, D.~Labate, W.-Q. Lim, G.~Weiss, and E.~Wilson.
\newblock Wavelets with composite dilations and their {MRA} properties.
\newblock {\em Appl. Comput. Harmon. Anal.}, 20(2):202--236, 2006.

\bibitem{HalKayTit90}
P.~Hall, J.~W. Kay, and D.~M. Titterington.
\newblock Asymptotically optimal difference-based estimation of variance in
  nonparametric regression.
\newblock {\em Biometrika}, 77(3):521--528, 1990.

\bibitem{HalKerPic99}
P.~Hall, G.~Kerkyacharian, and D.~Picard.
\newblock On the minimax optimality of block thresholded wavelet estimators.
\newblock {\em Statist. Sinica}, 9(1):33--49, 1999.

\bibitem{HalMur90}
P.~Hall and J.~S. Marron.
\newblock On variance estimation in nonparametric regression.
\newblock {\em Biometrika}, 77(2):415--419, 1990.

\bibitem{HalEtal97}
P.~Hall, S~Penev, G.~Kerkyacharian, and D.~Picard.
\newblock Numerical performance of block thresholded wavelet estimators.
\newblock {\em Statistics and Computing}, 7:115--124, 1997.

\bibitem{HaeKerPicTsy98}
W.~H\"ardle, G.~Kerkyacharian, D.~Picard, and A.~Tsybakov.
\newblock {\em Wavelets, approximation, and statistical applications}, volume
  129 of {\em Lecture Notes in Statistics}.
\newblock Springer, New York, 1998.

\bibitem{Joh11}
I.~M. Johnstone.
\newblock {\em Gaussian estimation: Sequence and wavelet models}.
\newblock Draft of a Monograph, Version October 6 2011.

\bibitem{JohSil97}
I.~M. Johnstone and B.~W. Silverman.
\newblock Wavelet threshold estimators for data with correlated noise.
\newblock {\em J. Roy. Statist. Soc. Ser. B}, 59(2):319--351, 1997.

\bibitem{KerNicPic12}
R.~Kerkyacharian, G.and~Nickl and D.~Picard.
\newblock Concentration inequalities and confidence bands for needlet density
  estimators on compact homogeneous manifolds.
\newblock {\em Probab. Theory Relat. Fields}, page to appear, 2012.

\bibitem{LabLimKutWei05}
D.~Labate, W.-Q Lim, G.~Kutyniok, and G.~Weiss.
\newblock Sparse multidimensional representation using shearlets.
\newblock {\em Proc. SPIE}, 5914:59140U, 2005.

\bibitem{LanGuoOdeBurWel96}
M.~Lang, H.~Guo, J.~E. Odegard, C.~S. Burrus, and Jr. Wells, R.~O.
\newblock Noise reduction using an undecimated discrete wavelet transform.
\newblock {\em IEEE Signal Process. Lett.}, 3(1):10--12, jan 1996.

\bibitem{LeaLinRoo83}
M.~R. Leadbetter, G.~Lindgren, and H.~Rootz{\'e}n.
\newblock {\em Extremes and related properties of random sequences and
  processes}.
\newblock Springer Series in Statistics. Springer, New York, 1983.

\bibitem{LiSha02}
W.~V. Li and Q.-M. Shao.
\newblock A normal comparison inequality and its applications.
\newblock {\em Probab. Theory Related Fields}, 122(4):494--508, 2002.

\bibitem{Mal09}
S.~Mallat.
\newblock {\em A wavelet tour of signal processing: The sparse way}.
\newblock Elsevier/Academic Press, Amsterdam, third edition, 2009.

\bibitem{MunBisWagFre05}
A.~Munk, N.~Bissantz, T.~Wagner, and G.~Freitag.
\newblock On difference-based variance estimation in nonparametric regression
  when the covariate is high dimensional.
\newblock {\em J. R. Stat. Soc. Ser. B Stat. Methodol.}, 67(1):19--41, 2005.

\bibitem{NasSil95}
G.~P. Nason and B.~W. Silverman.
\newblock The stationary wavelet transform and some statistical applications.
\newblock In A.~Antoniadis and G.~Oppenheim, editors, {\em Lecture Notes in
  Statistics}, volume 103. Springer, 1995.

\bibitem{NgKohPuvAbe08}
A.~K. Ng, T.~S. Koh, K.~Puvanendran, and U.~R. Abeyratne.
\newblock Snore signal enhancement and activity detection via
  translation-invariant wavelet transform.
\newblock {\em IEEE Trans. Biomed. Eng.}, 55(10):2332--2342, oct. 2008.

\bibitem{PesKriCar96}
J.-C. Pesquet, H.~Krim, and H.~Carfantan.
\newblock Time-invariant orthonormal wavelet representations.
\newblock {\em IEEE Trans. Signal Process.}, 44(8):1964--1970, 1996.

\bibitem{Sid67}
Z.~{\v{S}}id{\'a}k.
\newblock Rectangular confidence regions for the means of multivariate normal
  distributions.
\newblock {\em J. Amer. Statist. Assoc.}, 62:626--633, 1967.

\bibitem{Sle62}
D.~Slepian.
\newblock The one-sided barrier problem for {G}aussian noise.
\newblock {\em Bell System Tech. J.}, 41:463--501, 1962.

\bibitem{StaMurFad10}
J.-L. Starck, F.~Murtagh, and J.~M. Fadili.
\newblock {\em Sparse image and signal processing}.
\newblock Cambridge University Press, Cambridge, 2010.

\end{thebibliography}
\end{document}